\documentclass[11pt]{amsart}
\usepackage[a4paper,margin=29mm]{geometry}
\usepackage[T1]{fontenc}
\usepackage{lmodern}
\usepackage{microtype}
\usepackage{indentfirst}
\usepackage{amsmath,amssymb,amsthm,mathtools}
\usepackage{xcolor}
\usepackage[
  colorlinks=false,
  pdfborder={0 0 1},
  linkbordercolor=red,
  citebordercolor=green,
  urlbordercolor=blue
]{hyperref}

\usepackage{etoolbox}

\makeatletter

\patchcmd{\thebibliography}
  {\usecounter{enumiv}}
  {\usecounter{enumiv}%
   \setlength{\itemsep}{4pt}%
   \setlength{\parsep}{0pt}}
  {}{}

\makeatother

\allowdisplaybreaks
\newtheorem{theorem}{Theorem}[section]
\newtheorem{lemma}[theorem]{Lemma}
\newtheorem{cor}[theorem]{Corollary}
\newtheorem{proposition}[theorem]{Proposition}
\newtheorem{definition}[theorem]{Definition}
\theoremstyle{remark}
\newtheorem{remark}[theorem]{Remark}
\theoremstyle{plain}
\newtheorem{problem}[theorem]{Problem}
\newcommand{\cF}{\mathcal F}
\newcommand{\Gm}{G_{\mathbf m}}

\newcommand{\N}{\mathbb N}

\begin{document}

 \baselineskip 16.6pt
\hfuzz=6pt

\widowpenalty=10000

\renewcommand{\theequation}
{\thesection.\arabic{equation}}

\numberwithin{equation}{section}

\title[Vilenkin Partial Sums]
{Vector-valued partial sums on unbounded Vilenkin systems}

\author{Deyu Chen}
\address{Deyu Chen, Institute for Advanced Study in Mathematics,
Harbin Institute of Technology, Harbin 150001, China}
\email{1201200317@stu.hit.edu.cn}

\author{Guixiang Hong}
\address{Guixiang Hong, Institute for Advanced Study in Mathematics,
Harbin Institute of Technology, Harbin 150001, China}
\email{gxhong@hit.edu.cn}

\date{}

\subjclass[2020]{Primary 42B37; Secondary 43A25, 46B09, 46L52}

\keywords{Vilenkin systems, partial-sum operators,
UMD spaces, independent first chaos, decoupling}

\begin{abstract}
Let \(\Gm=\prod_{k\ge0}\mathbb Z_{m_k}\) be a Vilenkin
group that is not necessarily bounded, i.e.,
\(\sup_k m_k=\infty\). We prove that, for every UMD Banach space
\(X\) and every \(1<p<\infty\), the Vilenkin partial-sum operators are
uniformly bounded on \(L^p(\Gm;X)\), with a bound depending only on
\(p\) and the UMD constant of \(X\), and not on \(\mathbf m\). This resolves an open problem arising from the work of Clément et al.~\cite{ClementDePagterSukochevWitvliet2000} and later recorded explicitly in the book of Hytönen et al.~\cite[p.~362]{HNVWI}. The proof reduces the partial-sum estimate, via a Paley conjugation identity and a tangent-sequence decoupling argument, to a decoupling inequality for Fourier projections on finite cyclic groups, which appears to be new.
The same approach also yields \(\mathcal R\)-boundedness for the family of partial-sum operators associated with the finer block decomposition, thereby resolving another related problem communicated to us by Fedor Sukochev.
\end{abstract}

\maketitle

\section{Introduction}
\subsection{Background}
Let \(\mathbf m=(m_k)_{k\ge0}\), where \(m_k\ge2\), and let
\[
\Gm:=\prod_{k\ge0}\mathbb Z_{m_k}
\]
be the associated Vilenkin group, equipped with its normalized Haar
measure \(\mu\). The Vilenkin system is called bounded
if \(\sup_km_k<\infty\) and unbounded otherwise.
Let \((\psi_n)_{n\ge0}\) be the corresponding Vilenkin character
system. For an \(X\)-valued Vilenkin polynomial \(f\), set
\[
\widehat f(n)
:=
\int_{\Gm}f(x)\overline{\psi_n(x)}\,d\mu(x).
\]
If \(\phi:\mathbb N\to\mathbb C\), the associated
Fourier--Vilenkin multiplier is formally given by
\[
T_\phi f
:=
\sum_{n\ge0}\phi(n)\widehat f(n)\psi_n.
\]
For \(A\subseteq\mathbb N\), we write \(P_A:=T_{\mathbf1_A}\). In
particular,
\[
S_n:=P_{[0,n)},\quad n\ge1,
\qquad S_0:=0,
\]
is the \(n\)-th Vilenkin partial-sum operator.

The scalar theory of Fourier--Vilenkin series has a long history. A central question is the uniform boundedness of the partial-sum operators, which is closely related to the mean convergence of Vilenkin--Fourier series. Early results were obtained only for bounded Vilenkin systems; see, for instance, \cite{Watari1958,Gosselin1973}. Significant progress was made by Young~\cite{Young1976}, who established uniform weak $(1,1)$ and strong $(p,p)$ boundedness of partial sums for arbitrary Vilenkin systems, including the unbounded case. In subsequent work, Young further developed weighted estimates and multiplier theorems in this unbounded setting~\cite{Young1993,Young1994}. These results constituted a genuine extension of the classical bounded theory to arbitrary Vilenkin systems. For more details on the classical theory, we refer to the monograph of Schipp, Wade, and Simon~\cite{SchippWadeSimon1990}.

For vector-valued functions, Cl{\'e}ment, de Pagter, Sukochev, and
Witvliet~\cite{ClementDePagterSukochevWitvliet2000} proved that, for a
fixed bounded Vilenkin system and \(1<p<\infty\), the frequency
subspaces \((\psi_nX)_{n\ge0}\) form a Schauder decomposition of
\(L^p(\Gm;X)\) if and only if \(X\) is UMD. Equivalently, for each
fixed bounded Vilenkin system, \(X\) is UMD if and only if
$$
\sup_{n\ge1}
\|S_n\|_{L^p(\Gm;X)\to L^p(\Gm;X)}<\infty,
$$
where the bound may depend on \(\sup_k m_k\). They left open whether the same characterization remains valid for arbitrary, possibly unbounded, Vilenkin systems. This problem
was later recorded explicitly in the book of Hyt\"onen, van Neerven, Veraar, and
Weis~\cite[p.~362]{HNVWI}, which can be formulated as follows.

\begin{problem}\label{p1}
Let \(1<p<\infty\) and let \(X\) be a \(\operatorname{UMD}\) Banach
space. Does there exist a constant \(C_{p,X}\), depending only on \(p\)
and the \(\operatorname{UMD}\) constant of \(X\), such that, for every
generating sequence \(\mathbf m=(m_k)_{k\ge0}\) with \(m_k\ge2\),
$$
\sup_{n\ge1}
\|S_n\|_{L^p(\Gm;X)\to L^p(\Gm;X)}
\le C_{p,X}?
$$
\end{problem}

This problem is considerably more difficult because the uniform bound is required to be independent of the generating sequence \(\mathbf m\).
To the best of our knowledge, previous progress was limited to the following special case. Using a generalized triangular truncation argument, Dodds, Ferleger, de Pagter, and Sukochev~\cite{DoddsFerlegerDePagterSukochev2001} proved that, for every semifinite von Neumann algebra \(\mathcal M\) and every \(1<p<\infty\),
$$
\sup_{n\ge1}
\|S_n\|_{L^p(\Gm;L^p(\mathcal M))
\to L^p(\Gm;L^p(\mathcal M))}
\le C_p.
$$
 In particular, this gives the Schauder-decomposition property for arbitrary Vilenkin systems in noncommutative \(L^p\)-spaces.
More recently, Hong and Zhao~\cite{HongZhao2026} developed a variant of the noncommutative Calder\'on--Zygmund argument to obtain the uniform weak type \((1,1)\) estimate
$$
\sup_{n\ge1}
\|S_n\|_{L_1(\mathcal N)\to L_{1,\infty}(\mathcal N)}
\le C,
\qquad
\mathcal N=L^\infty(\Gm)\,\overline\otimes\,\mathcal M,
$$
for arbitrary Vilenkin systems and semifinite von Neumann algebras \(\mathcal M\), and thus provided the sharp order $C_p\simeq \frac{p^2}{p-1}$. 
 For related results, see also \cite{ScheckterSukochev2018,TianZhou2024} and the references therein.

\subsection{The first main result and proof strategy}

Our main theorem gives an affirmative answer to Problem~\ref{p1}.
\begin{theorem}
\label{thm:uniform-umd-vilenkin-partial-sums}
Let \(X\) be a \(\operatorname{UMD}\) Banach space with \(\operatorname{UMD}\) constant $\beta_{p,X}$,  and let \(1<p<\infty\). There exists a
constant \(C_{p}<\infty\) depending only on \(p\) such that, for every generating sequence
\(\mathbf m=(m_k)_{k\ge0}\) with \(m_k\ge2\) and
for every \(f\in L^p(\Gm;X)\),
\begin{equation}\label{eq:uniform-umd-partial-sums}
\sup_{n\geq1}\|S_nf\|_{L^p(\Gm;X)}
\le C_{p}\beta_{p,X}^4\|f\|_{L^p(\Gm;X)}.
\end{equation}
\end{theorem}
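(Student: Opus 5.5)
The plan is to follow the strategy announced in the abstract. First decompose $S_n$ via the martingale structure of $\Gm$. Let $M_k=m_0\cdots m_{k-1}$ and let $\mathbb E_k$ be the conditional expectation onto functions of the first $k$ coordinates, so that $\mathbb E_k=S_{M_k}$. Write $n=\sum_k a_kM_k$ with $0\le a_k<m_k$. The Vilenkin character $\psi_n$ factors as $\prod_k r_k^{a_k}$, where $r_k(x)=e^{2\pi i x_k/m_k}$. The classical identity then gives
\[
S_nf=\sum_{k}\psi_n\,\bigl(\text{partial sum of order }a_k\text{ in the }k\text{-th coordinate applied to }(\mathbb E_{k+1}-\mathbb E_k)(\overline{\psi_n}f)\bigr),
\]
up to harmless modulations. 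Since multiplication by the unimodular $\psi_n$ is an isometry, it suffices to bound an operator of the form $g\mapsto\sum_k P^{(k)}_{[0,a_k)}d_k g$. Here $d_kg=(\mathbb E_{k+1}-\mathbb E_k)g$ are the martingale differences, and $P^{(k)}_{[0,a)}$ is the Fourier projection on $\mathbb Z_{m_k}$ acting only in the variable $x_k$. This projection is applied to the Fourier modes $1,\dots,m_k-1$ of $d_kg$ in $x_k$; the coefficients are $\mathbb E_k$-measurable.

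The key point is that uniformity in $m_k$ cannot come from boundedness of $P_{[0,a)}$ on $L^p(\mathbb Z_m;X)$ alone. That bound is uniform via the Riesz projection on the circle, since $P_{[0,a)}$ on $\mathbb Z_m$ is a transference of a Fourier truncation. The difficulty is in combining the pieces across $k$. For this I would use a Paley-type conjugation identity: each $d_kg$ is conditionally a trigonometric polynomial in $x_k$ without constant term. Applying $P^{(k)}_{[0,a_k)}$ amounts to taking $d_kg$ minus a modulated analytic projection, namely $r_k^{a_k}$ times the Riesz projection in $x_k$ applied to $r_k^{-a_k}d_kg$. The problem therefore reduces to bounding
\[
\Bigl\|\sum_k \mathcal P^{(k)} d_k g\Bigr\|_p\lesssim\|g\|_p ,
\]
where $\mathcal P^{(k)}$ is a family of Fourier projections on $\mathbb Z_{m_k}$ with $\mathbb E_k$-measurable symbols.

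Then I would decouple. The sequence $(d_kg)$ is replaced by a tangent sequence $(\tilde d_k)$, obtained by substituting $x_k$ with an independent copy $\tilde x_k$. In UMD spaces, the decoupling inequalities (two-sided, costing a factor $\beta_{p,X}$ each way) give $\|\sum \mathcal P^{(k)}d_k\|_p\le\beta\|\sum\mathcal P^{(k)}\tilde d_k\|_p$ and $\|\sum\tilde d_k\|_p\le\beta\|\sum d_k\|_p$. After conditioning on the original variables, the decoupled sum is a sum of independent mean-zero terms, one per cyclic group $\mathbb Z_{m_k}$: an "independent first chaos". The remaining step is a new decoupling inequality for Fourier projections on finite cyclic groups:
\[
\Bigl\|\sum_k \mathcal P^{(k)}_{\tilde x_k}\,\phi_k(\tilde x_k)\Bigr\|_{L^p(\prod\mathbb Z_{m_k};X)}\le C\beta^2\Bigl\|\sum_k\phi_k(\tilde x_k)\Bigr\|_p
\]
for mean-zero $\phi_k$. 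I would prove it by transferring each $\mathbb Z_{m_k}$ to an independent circle coordinate via the embedding of $\mathbb Z_m$-characters into $\mathbb T$, which requires care with aliasing since the frequencies $1,\dots,m-1$ are modded out. On the product of circles $\mathbb T^{\mathbb N}$, the multiparameter projection acting on separate variables can be written as a conditional Hilbert transform. Its boundedness follows from a UMD-valued estimate, either the Bourgain/Burkholder Hilbert transform bound or a martingale transform argument along the filtration generated by $\tilde x_0,\tilde x_1,\dots$, together with the frequency-ordering of each coordinate. This yields one or two more factors of $\beta$, for a total of $\beta^4$.

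I expect this last step to be the main obstacle. Applying the one-dimensional Riesz projection separately in each independent coordinate and summing does not trivially give a uniform bound; it needs an argument that handles all coordinates simultaneously. I would first try to realize $\sum_k \mathcal P^{(k)}\phi_k$ as a Hilbert transform along a single generic direction on a torus, via a Kronecker/lacunary embedding of the frequencies with large separation. The frequency orderings within distinct coordinates would then become consistent with a single half-line, so that one application of the vector-valued Riesz projection suffices.
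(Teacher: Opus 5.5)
Up to the point where you isolate the first-chaos inequality, your plan matches the paper. Paley conjugation gives $S_nf=\psi_n\sum_k\pi_{k,n_k}d_k(\overline{\psi_n}f)$. The identity $\pi_{k,a}d_k=Q^{(x_k)}_{m_k,a}d_k$ makes each term a cyclic projection in $x_k$. McConnell's inequality, applied to the tangent sequences $h_k(x_{<k},y_k)$ and $Q^{(y_k)}_{m_k,a_k}h_k(x_{<k},y_k)$, costs $\beta_{p,X}^2$. What remains is the inequality $\|\sum_kQ_{m_k,a_k}h_k(y_k)\|_p\le\gamma_{p,X}\|\sum_kh_k(y_k)\|_p$ for mean-zero $h_k$, uniformly in $N$ and $(m_k)$. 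This is the paper's CPD property, and it is the real content of the theorem. You leave it unproved, and your sketch of it has two concrete defects.

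First, ``care with aliasing'' is exactly where uniformity in $m_k$ has to be earned. A one-variable sampling or Marcinkiewicz--Zygmund inequality does not automatically pass to sums $\sum_kT_k(t_k)$ with constants independent of $N$. The paper uses the isometric step embedding $\mathcal J_m$, for which $\widehat{\mathcal J_mh}(n)=\operatorname{sinc}(n/m)\widehat h(n\bmod m)$. It then proves the exact factorization $\mathcal J_mP_{m,J}=\mathcal J_mD_m\mathcal U_m\mathcal E_mT^{\mathbb T}_{b_{m,J}}\mathcal J_m$ for intervals $J$ in the centered residue system $R_m$. Each factor is compatible with independent sums:
\begin{itemize}
\item $\mathcal E_m$ is a conditional expectation, and it stays contractive for the product cell $\sigma$-algebra;
\item the inverse-sinc multiplier $D_m$ has uniformly $\ell^1$ kernels, so it is a bounded multiple of an average of weighted coordinate translations, which preserve the joint law;
\item $b_{m,J}=\mathbf 1_J\operatorname{sinc}^{-1}(\cdot/m)$ has uniformly bounded variation.
\end{itemize}
Centering is essential: for a non-centered window the inverse-sinc factor can be of size comparable to $m$. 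This is why the terminal projections are first rewritten as $P_{m,[-a,-1]}$ or $\mathrm{Id}-P_{m,[1,m-a-1]}$.

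Second, the resulting torus symbols are two-sided windows (in fact BV functions) that change with $k$. A half-space $\{\xi:\lambda\cdot\xi>c\}$ meets each coordinate axis in a ray. So no single directional Riesz projection, for any Kronecker-type choice of $\lambda$, can cut out $\{ne_k:n\in I_k\}$ with $I_k$ bounded when the spectrum of $f_k$ extends on both sides. Hence ``one application of the vector-valued Riesz projection'' cannot suffice. The ``conditional Hilbert transform'' and martingale-transform alternatives also give no mechanism that is uniform in $m_k$. The paper proceeds as follows:
\begin{itemize}
\item it symmetrizes the independent sum with Rademacher signs;
\item it uses the common translation $F_k(u,t)=f_k(t_k+u)$ to move all coordinate projections into a single variable $u$;
\item it invokes $\mathcal R$-boundedness of interval projections on $L^p(\mathbb T;L^p(\mathbb T^N;X))$, which follows from Bourgain's theorem and the UMD property of the Bochner space;
\item it writes each BV symbol as an average of interval projections.
\end{itemize}
This gives $\gamma_{p,X}\lesssim_p\beta_{p,X}^2$ and hence the exponent $4$. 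Your count of ``one or two more factors of $\beta$'' does not justify that exponent.
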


As a consequence, we obtain a characterization of the UMD property
valid for every, possibly unbounded, Vilenkin system.
\begin{cor}
\label{cor:umd-partial-sums-schauder}
Let \(X\) be a Banach space, let \(1<p<\infty\) and fix an arbitrary
generating sequence \(\mathbf m=(m_k)_{k\ge0}\), \(m_k\ge2\). The
following assertions are equivalent:
\begin{enumerate}
\item[\rm{(i)}] \(X\) is a \(\operatorname{UMD}\) Banach space.
\item[\rm{(ii)}] The Vilenkin partial-sum operators are uniformly bounded in
\(n\), that is,
\begin{equation}\label{eq:partial-sum-uniform-corrected}
\sup_{n\ge1}
\|S_n\|_{L^p(\Gm;X)\to L^p(\Gm;X)}<\infty.
\end{equation}
\item[\rm{(iii)}] The sequence of frequency subspaces
\[
\bigl(\psi_nX\bigr)_{n\ge0},
\qquad
\psi_nX:=\{\psi_nx:x\in X\},
\]
is a Schauder decomposition of \(L^p(\Gm;X)\).
\end{enumerate}
Moreover, if one of these assertions holds, then the bound in
\eqref{eq:partial-sum-uniform-corrected} can be chosen independently of
the generating sequence \(\mathbf m\), as in
\eqref{eq:uniform-umd-partial-sums}.
\end{cor}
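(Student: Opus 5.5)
The corollary follows from Theorem~\ref{thm:uniform-umd-vilenkin-partial-sums} together with standard facts about Schauder decompositions and the known necessity of UMD. I would prove (i)$\Rightarrow$(ii)$\Rightarrow$(iii)$\Rightarrow$(i).

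For (i)$\Rightarrow$(ii), apply Theorem~\ref{thm:uniform-umd-vilenkin-partial-sums} directly. It gives $\sup_n\|S_n\|\le C_p\beta_{p,X}^4$ with no dependence on $\mathbf m$, which is also the final "moreover" clause.

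For (ii)$\Rightarrow$(iii), I use the standard criterion: a sequence of closed subspaces is a Schauder decomposition iff their span is dense and the partial-sum projections onto them are uniformly bounded.
\begin{enumerate}
\item[(a)] Vilenkin polynomials $\sum_{n<N}\psi_n x_n$ are dense in $L^p(\Gm;X)$. Indeed, conditional expectations onto the $\sigma$-algebras generated by the first $k$ coordinates converge in $L^p(\Gm;X)$. Their ranges are spanned by $\psi_n x$ with $n<m_0\cdots m_{k-1}$, since simple functions depending on finitely many coordinates are $X$-valued polynomials in the characters.
\item[(b)] Each $\psi_nX$ is closed, since it is isometric to $X$.
\item[(c)] $S_n$ is the projection onto $\bigoplus_{j<n}\psi_jX$ along the remaining frequencies, and $S_nS_k=S_{\min(n,k)}$.
\end{enumerate}
Uniform boundedness then gives $S_nf\to f$ for all $f$. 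This holds on polynomials trivially and extends by density. Uniqueness of the expansion follows from $\widehat f(n)$ being determined by integration against $\overline{\psi_n}$.

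For (iii)$\Rightarrow$(i), the main step is to reduce to martingale transforms. A Schauder decomposition has uniformly bounded partial-sum projections, so in particular the $S_{M_k}$ with $M_k:=m_0\cdots m_{k-1}$ are uniformly bounded. Here $S_{M_k}=\mathbb E(\cdot\mid\mathcal F_k)$, which is automatic, so these alone are not enough. Instead, I would use the intermediate $S_n$ to build differences along a filtration. On each coordinate $\mathbb Z_{m_k}$, the projection onto frequencies $\{0\}$ versus $\{1,\dots,m_k-1\}$ within level $k$ is a difference of two $S_n$'s composed with conditional expectations. Since $m_k\ge2$, restricting to functions that use only the characters $r_k^{j}$ with $j\in\{0,1\}$ on each coordinate and testing with Paley--Walsh-type martingales gives bounded "sign-selection" operators. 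Uniformly bounded partial sums imply unconditionality of the decomposition of martingale differences up to constants, by the usual Burkholder argument: boundedness of triangular truncations of differences yields UMD via the Maurey/Burkholder reduction to Paley--Walsh martingales.

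Rather than redoing this, I would cite the argument of Cl\'ement--de Pagter--Sukochev--Witvliet~\cite{ClementDePagterSukochevWitvliet2000}. Its necessity direction uses only one fixed Vilenkin system and embeds dyadic (Paley--Walsh) martingales through the characters $\psi_{M_k}^{\,j}$. It does not use boundedness of $\sup m_k$, and I would check this point carefully; this is the main obstacle.
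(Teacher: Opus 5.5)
\textbf{The gap is in (iii)$\Rightarrow$(i).} Your arguments for (i)$\Rightarrow$(ii) and (ii)$\Rightarrow$(iii), and the Banach--Steinhaus step implicit in (iii)$\Rightarrow$(ii), match the paper. The necessity direction, however, is left to a citation you have not checked, and the mechanism you sketch does not work as stated.

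First, the ``triangular truncation'' argument cannot be the mechanism. Truncations of martingale differences are $\sum_{j<k}d_j=E_k-E_0$, and these are bounded in every Banach space; you note this yourself for $S_{M_k}$. Differences of partial sums such as $S_{M_{k+1}}-S_{M_k}=d_k$ are equally uninformative. What UMD requires is a uniform bound on \emph{arbitrary} selections $\sum_j\eta_jd_j$ with $\eta_j\in\{0,1\}$, and no single $S_n$, nor any difference of them, produces such a selection directly.

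Second, your fallback of embedding Paley--Walsh martingales through powers of $\psi_{M_k}$ cannot be carried out exactly in general. If every $m_k$ is odd, then every $M_k$ is odd. So no $\mathcal F_k$ contains an event of measure $1/2$, and there is no fair $\pm1$ variable measurable at any finite level of the Vilenkin filtration. The point you flag as ``the main obstacle'' is therefore a real issue, not bookkeeping.

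\textbf{The missing idea is the Paley conjugation identity combined with modulation.} Take
\[
n_\eta:=\sum_{j=0}^J\eta_j(m_j-1)M_j,\qquad \eta_j\in\{0,1\}.
\]
Since $\pi_{j,m_j-1}=d_j$ and $\pi_{j,0}=0$, Lemma~\ref{lem:paley-conjugation-identity} gives $P_{B_{n_\eta}}=\sum_j\eta_jd_j$. Hence
\[
\overline{\psi_{n_\eta}}\,S_{n_\eta}\bigl(\psi_{n_\eta}f\bigr)=\sum_{j=0}^J\eta_jd_jf .
\]
Multiplication by a character is an isometry, so (ii) yields $\|\sum_j\eta_jd_jf\|_p\le C\|f\|_p$ for every choice of $\eta$. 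Setting $\theta_j=2\eta_j-1$ then gives
\[
\Bigl\|\sum_j\theta_jd_jf\Bigr\|_p\le 2(C+1)\|f\|_p .
\]
This is a uniform bound on sign transforms of all finite martingales for the canonical Vilenkin filtration, and it holds for every $\mathbf m$, bounded or not. The paper concludes from this via a characterization of UMD in terms of the atomic Vilenkin filtration (cited from Weisz), not via a reduction to Paley--Walsh martingales. Without this step or an equivalent one, your proof of (iii)$\Rightarrow$(i) is incomplete.
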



\begin{remark}
In addition to the easy consequence---Corollary \ref{cor:umd-partial-sums-schauder}, Theorem \ref{thm:uniform-umd-vilenkin-partial-sums} will play a substantial and nontrivial role in a subsequent work on the vector-valued Rubio de Francia-Littlewood-Paley square function inequality on Vilenkin systems \cite{ChenHong}. On the other hand, it would be interesting to determine the optimal order of $\beta_{p,X}$ in \eqref{eq:uniform-umd-partial-sums}.
\end{remark}

Since systems are allowed to be unbounded, the proof of Theorem \ref{thm:uniform-umd-vilenkin-partial-sums} requires ideas that are substantially different from those used in the bounded case. To explain the main difficulty and our approach, we first introduce some necessary notation and briefly recall the key argument for bounded Vilenkin systems. Given a generating sequence
\(\mathbf m=(m_k)_{k\ge0}\), define
\begin{equation}\label{Mk}
M_0:=1,
\qquad
M_{k+1}:=m_kM_k,
\qquad k\ge0.
\end{equation}
Let
\(
\Lambda_{\mathbf m}
:=
\bigl\{(k,\ell):k\ge0,\ 1\le\ell<m_k\bigr\}
\)
and for \((k,\ell)\in\Lambda_{\mathbf m}\), set
\[
\delta_{k,\ell}
:=
[\ell M_k,(\ell+1)M_k),
\qquad
\Delta_{k,\ell}
:=
P_{\delta_{k,\ell}}.
\]
Write \(n=\sum_{k=0}^K n_kM_k\), where \(0\le n_k<m_k\) and $K$ is the largest number $k\in\mathbb N$ such that $n_k\ne 0$. The Paley conjugation identity (Lemma~\ref{lem:paley-conjugation-identity}) allows us to rewrite the $n$-th partial-sum operator as
\begin{align*}
    S_nf&{=}\psi_n\sum_{0\le k\le K}\sum_{\ell=m_k-n_k}^{m_k-1}\Delta_{k,\ell}(\overline{\psi_n}f)=\psi_n\sum_{\ell=1}^{\sup_km_k-1}\sum_{\substack{0\le k\le K\\ m_k-n_k\le \ell\le m_k-1}}\Delta_{k,\ell}(\overline{\psi_n}f).
\end{align*}
By the triangle inequality, the outer sum in \(\ell\) may be moved outside the norm,
\begin{align*}
 \|S_nf\|_{L^p(\Gm;X)}\le \sum_{\ell=1}^{\sup_km_k-1}\left\|\sum_{\substack{0\le k\le K\\ m_k-n_k\le \ell\le m_k-1}}\Delta_{k,\ell}(\overline{\psi_n}f)\right\|_{L^p(\Gm;X)}.
 \end{align*}
As follows from \eqref{finer}, for a fixed $\ell$, the functions $\Delta_{k,\ell}f$, with $m_k>\ell$, form a martingale difference sequence. Hence, the vector-valued Stein inequality, followed by two applications of the UMD inequality, controls each inner norm by a constant multiple of $\|f\|_{L^p(\Gm;X)}$. More details can be found in Section \ref{sec:proof of bounded}.

Obviously, the above argument is specific to bounded Vilenkin systems. To deal with the unbounded case, we use the notation in \eqref{eq:pi_k,a}, namely,
$$\pi_{k,n_k}=\sum_{\ell=m_k-n_k}^{m_k-1}\Delta_{k,\ell},$$
and estimate the following expression 
\begin{align}\label{pi}S_nf=\psi_n\sum_{k=0}^K\pi_{k,n_k}d_k(\overline{\psi_n}f)\end{align}
directly. We achieve this by combining several decoupling arguments with a sampling factorization identity; moreover, we discover several equivalent characterizations of the UMD property in terms of decoupling inequalities that might be of independent interest. The one playing a decisive role is called the {\it cyclic projections decoupling property}, abbreviated as CPD, which is related to the first Hoeffding chaos; see Definition \ref{def:CPD-property} and Theorem \ref{thm:umd-implies-CPD}. With this property, one may then exploit McConnell's
decoupling inequality for tangent martingale-difference sequences in UMD spaces to handle \eqref{pi} in Proposition \ref{prop:adapted-terminal-core}, and thus complete the proof of the uniform boundedness of the partial-sum operators.

The proof of the equivalence UMD$\Longleftrightarrow$CPD consists of four intermediate results. In the first step, we establish a decoupling inequality \eqref{eq:diagonal-bv-independent} for Marcinkiewicz multipliers on the tori in Theorem \ref{thm:diagonal-bv-independent}. In the second step, we obtain a decoupling inequality \eqref{eq:decoupling-cyclic-convolution} for convolution operators on the cyclic groups in Proposition~\ref{prop:inverse-sinc-uniform-kernel}. Both steps use techniques from Bourgain~\cite{Bourgain1983}. In the third step, we establish the sampling factorization identity \eqref{eq:exact-cyclic-sampling-factorization} in Lemma~\ref{lem:exact-cyclic-sampling-factorization}, which links Fourier projections on finite cyclic groups to Marcinkiewicz multipliers on the torus. In the fourth step, we exploit the two decoupling inequalities \eqref{eq:diagonal-bv-independent}, \eqref{eq:decoupling-cyclic-convolution} and the sampling identity \eqref{eq:exact-cyclic-sampling-factorization} to derive a decoupling inequality for centered cyclic projections \eqref{eq:independent-centered-cyclic-intervals} in Proposition \ref{prop:independent-centered-cyclic-intervals}. Finally, the passage from centered projections to terminal projections in the desired inequality \eqref{eq:CPD-property} is straightforward. This gives UMD$\Rightarrow$CPD; for the converse direction, we link the terminal projection with the Riesz projection through approximation; see the end of Section \ref{s5.3}. These implications also show that each of \eqref{eq:diagonal-bv-independent} and \eqref{eq:independent-centered-cyclic-intervals} characterizes the UMD property of $X$.

We finally point out that McConnell's decoupling inequality, originally employed in the construction of It\^o-type integrals for UMD-valued stochastic processes, has previously been used by Hyt\"onen in vector-valued Euclidean harmonic analysis \cite{Hytonen-IMRN,HytonenPseudo2011}. Here we apply this method to vector-valued Vilenkin Fourier analysis. A common feature of these applications is the presence of non-doubling measures or irregular filtrations.

\subsection{The second main result: \texorpdfstring{$\mathcal R$}{R}-boundedness of partial-sum projections associated to fine blocking}

The fine blocking and its associated partial-sum projections provide
another connection between the UMD property and Vilenkin systems. We first introduce some additional notation.
We equip \(\Lambda_{\mathbf m}\) with the lexicographic order, so that
\[
(r,i)<(k,j)
\quad\Longleftrightarrow\quad
r<k
\ \text{or}\
\bigl(r=k\ \text{and}\ i<j\bigr).
\]
For \((k,j)\in\Lambda_{\mathbf m}\), define the corresponding
fine-block partial-sum projection, with the constant block omitted, by
\[
S_{k,j}
:=
\sum_{\substack{(r,i)\in\Lambda_{\mathbf m}\\(r,i)<(k,j)}}
\Delta_{r,i}.
\]

In the bounded case, Cl\'ement et al.~\cite[Lemma~4.3]{ClementDePagterSukochevWitvliet2000}
proved that, for every UMD Banach space \(X\), the family
\(
\{S_{k,j}:(k,j)\in\Lambda_{\mathbf m}\}
\)
is \(\mathcal R\)-bounded on \(L^p(\Gm;X)\), with a bound that may
depend on \(\sup_km_k\). Combining this result with $\sup_km_k<\infty$ and the unconditionality
of the coarse martingale-difference decomposition, they further showed that the fine-block decomposition, with the
constant block adjoined, is unconditional in \(L^p(\Gm;X)\); see
\cite[Corollary~4.4]{ClementDePagterSukochevWitvliet2000}.

In the unbounded case, however, Watari's
counterexamples~\cite[Theorem~4]{Watari1958} show that this
fine-block unconditionality fails already in scalar-valued
\(L^p(\Gm)\) whenever \(p\ne2\). Nevertheless, the associated
ordered partial-sum projections remain \(\mathcal R\)-bounded for
arbitrary Vilenkin systems. This resolves another related problem communicated to us by Fedor Sukochev. The proof is obtained by a Rademacher-valued version of the argument used for
Theorem~\ref{thm:uniform-umd-vilenkin-partial-sums}.

\begin{theorem}\label{thm:R-bound}
Let \(X\) be a \(\operatorname{UMD}\) Banach space and  \(1<p<\infty\). Then the family
\(
\{S_{k,j}:(k,j)\in\Lambda_{\mathbf m}\}
\)
is \(\mathcal R\)-bounded on \(L^p(\Gm;X)\). More precisely, there
exists a constant \(C_{p}<\infty\), depending only on \(p\), such that, for every generating sequence $\mathbf m=(m_k)_k$ with $m_k\ge 2,$ every \(N\ge1\) and every \((k_a,j_a)\in\Lambda_{\mathbf m}\), 
\(h_a\in L^p(\Gm;X)\), \(1\le a\le N\),
\begin{equation}\label{eq:R-bound}
\left\|
\sum_{a=1}^N
\varepsilon_aS_{k_a,j_a}h_a
\right\|_{L^p(\Omega_\varepsilon\times\Gm;X)}
\le C_p
\beta_{p,X}^5
\left\|
\sum_{a=1}^N
\varepsilon_ah_a
\right\|_{L^p(\Omega_\varepsilon\times\Gm;X)}.
\end{equation}

Conversely, suppose that
\(\sup_km_k=\infty\) and \(X\ne\{0\}\). If the family
\(\{S_{k,j}:(k,j)\in\Lambda_{\mathbf m}\}\) is
\(\mathcal R\)-bounded on \(L^p(\Gm;X)\), then \(X\) is \(\operatorname{UMD}\).
\end{theorem}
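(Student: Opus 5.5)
Both directions reduce to structure already present in the proof of Theorem~\ref{thm:uniform-umd-vilenkin-partial-sums}.

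\emph{Sufficiency.} For $(k,j)\in\Lambda_{\mathbf m}$ we split
\[
S_{k,j}=\sum_{r<k}d_r+\sum_{1\le i<j}\Delta_{k,i},
\]
where $d_r=\sum_{i}\Delta_{r,i}$ are the coarse martingale differences. So $S_{k,j}$ is a martingale partial sum $\mathbb E_k-\mathbb E_0$ plus a terminal-type cyclic projection acting inside the $k$-th martingale difference $d_k$.

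The family $\{\mathbb E_k-\mathbb E_0\}_k$ is $\mathcal R$-bounded on $L^p(\Gm;X)$ with constant $\lesssim\beta_{p,X}$. This is Bourgain's vector-valued Stein inequality, applied in $L^p(\Omega_\varepsilon;X)$, which is UMD with the same constant.

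For the second piece, write $\pi'_{k,j}:=\sum_{1\le i<j}\Delta_{k,i}$. It acts on $d_kh$ as a Fourier projection onto an initial interval of $\mathbb Z_{m_k}$ in the $k$-th coordinate, with the lower coordinates $\Gm$-conditioned. The finite family $\{\pi'_{k_a,j_a}d_{k_a}h_a\}_a$ is treated exactly as in Proposition~\ref{prop:adapted-terminal-core}, with $X$ replaced by $L^p(\Omega_\varepsilon;X)$:
\begin{enumerate}
\item Group the indices $a$ according to $k_a$ and apply Stein's inequality to get a martingale-difference structure in $k$.
\item Use McConnell's tangent-sequence decoupling to replace the $k$-th coordinate by an independent copy.
\item Apply the CPD property (Theorem~\ref{thm:umd-implies-CPD}) of $L^p(\Omega_\varepsilon;X)$. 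For fixed $k$, the Rademacher sum $\sum_{a:k_a=k}\varepsilon_a\pi'_{k,j_a}$ of projections with varying cutoffs $j_a$ is handled by the cyclic-projection decoupling inequality, in its Rademacher form. Since the cutoffs are initial intervals, we reflect $i\mapsto m_k-i$ (conjugation by a character) to turn them into terminal projections.
\item Undo the decoupling and recombine using unconditionality of martingale differences in the UMD space $L^p(\Omega_\varepsilon;X)$.
\end{enumerate}
Counting constants, the Stein, McConnell (two uses) and unconditionality steps contribute one extra power of $\beta$ compared with Theorem~\ref{thm:uniform-umd-vilenkin-partial-sums}, consistent with $\beta^5$.

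The main obstacle is step~3. We must ensure that CPD gives an $\mathcal R$-type estimate, allowing distinct cutoffs $j_a$ and distinct functions $h_a$ for the same $k$, rather than only a single projection applied to each block. The plan is to apply CPD in the space $L^p(\Omega_\varepsilon;X)$ to the function $\sum_a\varepsilon_a d_kh_a$ and use the Kahane contraction principle. If that fails, I would invoke the Rademacher form of Proposition~\ref{prop:independent-centered-cyclic-intervals} directly, since Bourgain-type decoupling for Marcinkiewicz multipliers is $\mathcal R$-bounded in nature.

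\emph{Necessity.} Assume $\sup_km_k=\infty$ and that $\{S_{k,j}\}$ is $\mathcal R$-bounded. Then the differences $S_{k,j}-S_{k,1}$ form an $\mathcal R$-bounded family with constant at most twice as large. Restricted to functions depending only on the $k$-th coordinate, these differences are the Fourier partial-sum projections $[1,j)$ on $\mathbb Z_{m_k}$, with $\mathcal R$-bound (in particular uniform bound) independent of $m_k$.

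Choose $k$ with $m_k\to\infty$. Finite Fourier projections on $\mathbb Z_m$ with $m$ large approximate the Riesz projection on $\mathbb T$: sample trigonometric polynomials and modulate so the spectrum lies in $[1,m)$, as in the end of Section~\ref{s5.3}. This shows that the Riesz projection is bounded on $L^p(\mathbb T;X)$, which forces $X$ to be UMD by Bourgain's theorem.
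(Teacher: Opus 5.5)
Your decomposition $S_{k,j}=(E_k-E_0)+\sum_{1\le i<j}\Delta_{k,i}$ is the paper's, and so are the Stein and McConnell steps. The converse is also fine: your modulated sampling through $S_{k,j}-S_{k,1}$ is a harmless variant of the paper's use of $S_{k_r,L+1}+E_0$. The gap is the step you flag yourself, step~3, and your main plan for it does not work.

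Theorem~\ref{thm:umd-implies-CPD} applied in $L^p(\Omega_\varepsilon;X)$ lets you apply \emph{one} terminal projection per coordinate to $G_k=\sum_{a:k_a=k}\varepsilon_a g_a$. It therefore only produces $\sum_a\varepsilon_aQ_{m_k,\ell}g_a$ with a common $\ell$. The map $G_k\mapsto\sum_{a:k_a=k}\varepsilon_aQ_{m_k,j_a}g_a$ is an $\mathcal R$-boundedness statement for $\{Q_{m_k,\ell}\}_\ell$ inside a single coordinate. The contraction principle cannot supply it, because the $Q_{m_k,j_a}$ are operators, not scalars.

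Your reflection does not help either. Modulating by a character turns $[1,j_a)$ into a terminal interval, but the length of that interval still depends on $a$, and the mean-zero condition is lost. On $\mathbb Z$, every interval projection is a difference of two modulated Riesz projections. That is why $\mathcal R$-boundedness of interval projections on $\mathbb T$ follows from a single operator bound plus the pointwise contraction principle. On $\mathbb Z_m$ there is no single projection whose modulates generate all the $[1,j)$, so this route does not close.

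What you actually need is a genuinely randomized cyclic estimate, Lemma~\ref{lem:R-independent-cyclic}. For centered intervals $I_a\subset R_{m_{k_a}}$, with repeated indices $k_a$ allowed and no mean-zero assumption, it states $\bigl\|\sum_a\varepsilon_aP_{m_{k_a},I_a}h_a(y_{k_a})\bigr\|\lesssim_p\beta_{p,X}^2\bigl\|\sum_a\varepsilon_ah_a(y_{k_a})\bigr\|$.

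Combine this with \eqref{coincide1}, which writes $\sum_{1\le i<j}\Delta_{k,i}$ as a sum of two centered projections acting on $d_k$, and with two McConnell steps; this gives $\beta_{p,X}^4$. Stein's inequality then supplies the fifth power. No extra unconditionality step is needed, and counting one would overshoot $\beta^5$.

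Your fallback points in this direction, but it is only asserted. Proposition~\ref{prop:independent-centered-cyclic-intervals} does not give it directly, since its proof randomizes one mean-zero summand per coordinate. The paper proves the lemma in two moves:
\begin{enumerate}
\item[(a)] It upgrades Lemma~\ref{lem:randomized-torus-intervals}, via the absolutely convex hull, to an $\mathcal R$-bounded estimate for multipliers of uniformly bounded variation. This estimate is applied in a common translation variable $u$ with values in $L^p(\mathbb T^{K+1};X)$, which tolerates shared coordinates.
\item[(b)] It puts the inverse-sinc correction into the symbol $\mathbf 1_I(n)\operatorname{sinc}^{-2}(n/m)$ and averages over cells shifted by $u$. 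The shifted-cell average then reproduces $P_{m,I_a}h_a$ exactly, so Proposition~\ref{prop:inverse-sinc-uniform-kernel} is not needed.
\end{enumerate}
Keeping $D_m$ would also work. Since $D_m$ depends only on $k$, you can group by coordinate, subtract means, and apply Proposition~\ref{prop:inverse-sinc-uniform-kernel} with values in $L^p(\Omega_\varepsilon;X)$.

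Until you supply an argument of this kind, your sufficiency proof is missing its central step.
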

\begin{remark}
In Theorem~\ref{thm:R-bound}, the converse implication requires the
unboundedness assumption
\(
\sup_k m_k=\infty.
\)
When \(\sup_k m_k<\infty\), we do not know whether the converse
implication remains valid. Indeed, the \(\mathcal R\)-boundedness of the
family
\(
\{S_{k,j}:(k,j)\in\Lambda_{\mathbf m}\}
\)
is equivalent to the corresponding vector-valued Stein inequality for
the canonical Vilenkin filtration. Thus, the converse problem is closely connected with the long-standing question of whether the vector-valued Stein inequality can hold in a class of Banach spaces strictly larger than the class of UMD spaces; see
\cite[Remark~6.3(ii)]{VeraarWeis2010}.
\end{remark}

By definition, \(S_{k,j}+E_0=S_{jM_k}\). Consequently, the family
\(
\{S_{k,j}:(k,j)\in\Lambda_{\mathbf m}\}
\)
is \(\mathcal R\)-bounded if and only if
\(
\{S_{jM_k}:(k,j)\in\Lambda_{\mathbf m}\}
\)
is \(\mathcal R\)-bounded. It is therefore natural to ask whether the
full family of partial-sum operators,
\(
\{S_n:n\in\mathbb N\},
\)
is \(\mathcal R\)-bounded on \(L^p(\Gm;X)\) for every \(1<p<\infty\) and every UMD Banach space \(X\). In the scalar case \(X=\mathbb C\), 
this was proved by Young~\cite{Young1990}. Cl\'{e}ment et al.~\cite[Theorem~5.1]{ClementDePagterSukochevWitvliet2000}
proved, for bounded Vilenkin systems, that the full partial-sum family is \(\mathcal R\)-bounded whenever \(X\) is a $\operatorname{UMD}$
space with property \((\alpha)\). The proof relies on the Paley conjugation identity and the unconditionality of fine-block decomposition. We extend this result to arbitrary Vilenkin systems and, at the same time, obtain a converse: the full family of partial-sum operators is \(\mathcal R\)-bounded on \(L^p(\Gm;X)\) if and only if \(X\) is UMD and has property \((\alpha)\).

\begin{proposition}\label{prop:R-bounded-implies-alpha}
Let \(1<p<\infty\) and let \(X\) be a Banach space. Fix an arbitrary
generating sequence \(\mathbf m\). Then
\(\{S_n:n\in\mathbb N\}\) is \(\mathcal R\)-bounded on
\(L^p(\Gm;X)\) if and only if \(X\) is $\operatorname{UMD}$ and has property \((\alpha)\).
\end{proposition}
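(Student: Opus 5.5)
We argue the two implications separately.

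\emph{Sufficiency.} Assume $X$ is UMD with property $(\alpha)$. We follow the Clément et al.\ scheme, but replace their use of fine-block unconditionality, which fails for unbounded systems, by Theorem~\ref{thm:R-bound}. Take $n_1,\dots,n_N$ and $h_1,\dots,h_N$. Write each $n_a=\sum_k n_{a,k}M_k$ and apply the Paley conjugation identity to get
\[
S_{n_a}h_a=\psi_{n_a}\sum_{k}\pi_{k,n_{a,k}}d_k(\overline{\psi_{n_a}}h_a).
\]
Multiplication by the unimodular functions $\psi_{n_a}$ and $\overline{\psi_{n_a}}$ is pointwise. Hence, by the Kahane contraction principle applied pointwise in $x\in\Gm$, these factors are harmless for Rademacher sums. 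It therefore suffices to show that the family $\{\sum_k\pi_{k,j_k}d_k\}$, indexed by digit sequences $(j_k)$, is $\mathcal R$-bounded.

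Each $\pi_{k,j}$ equals a difference $S_{k,m_k}-S_{k,m_k-j}$ composed with $d_k$, where $S_{k,m_k}$ is interpreted as $S_{k+1,1}$. So the operator in question is
\[
\sum_k d_k\,T_{k,a}\,d_k,
\]
with $T_{k,a}$ taken from the family $\{S_{k,j}\}$, which is $\mathcal R$-bounded by Theorem~\ref{thm:R-bound}. To handle the sum over $k$, first use the UMD property to insert Rademachers $\varepsilon'_k$ on the martingale differences $d_k$. This produces a double Rademacher sum $\sum_{a,k}\varepsilon_a\varepsilon'_k\,d_kT_{k,a}d_k h_a$. Property $(\alpha)$ lets us replace $\varepsilon_a\varepsilon'_k$ by a doubly indexed Rademacher family $\varepsilon_{a,k}$. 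We then apply $\mathcal R$-boundedness of $\{S_{k,j}\}$ to the doubly indexed sum, and $\mathcal R$-boundedness of $\{d_k\}$ (Stein's inequality in UMD spaces) on both sides. Finally, we undo $(\alpha)$ and the UMD step.

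The delicate point here is that the $d_k$ sandwich must be compatible with the doubly indexed randomization. Since $d_k$ commutes with $S_{k,j}$, we may write $d_kT_{k,a}d_k=T_{k,a}d_k$. The Rademacher-valued Stein inequality then handles the $d_k$, after which the $(\alpha)$-randomized square function closes the estimate.

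\emph{Necessity.} Suppose $\{S_n\}$ is $\mathcal R$-bounded. In particular the $S_n$ are uniformly bounded, so $X$ is UMD by Corollary~\ref{cor:umd-partial-sums-schauder}.

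For property $(\alpha)$, we construct a doubly indexed test configuration. Choose distinct levels $k_1<\dots<k_N$. Consider the functions $h_{a,b}=\psi_{c_{a,b}}x_{a,b}$, where the frequencies $c_{a,b}$ are chosen using digits at the levels $k_a$ and $k_b$ so that suitable partial-sum operators $S_n$ select exactly the terms with $\theta_{a,b}=1$, for arbitrary sign patterns $\theta_{a,b}$. This is the standard Pisier-type argument: $\mathcal R$-boundedness of a family of projections indexed by a product structure yields $(\alpha)$.

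Concretely, we take frequencies $c_{a,b}=M_{k_a}+M_{k_b}$ with $k_a$ much larger than $k_b$. Using the Paley identity, each relevant $S_n$ acts diagonally on these characters, as a product of one-dimensional cut-offs. Testing the $\mathcal R$-bound against sums $\sum_a\varepsilon_a\sum_b\varepsilon'_b\ldots$ then produces the equivalence between $\sum\varepsilon_a\varepsilon'_b x_{ab}$ and $\sum\varepsilon_{ab}x_{ab}$. Here we use that the characters $\psi_{c_{a,b}}$ behave like independent Steinhaus or Rademacher variables, with contraction comparing the two.

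The main obstacle I expect is this necessity construction. The partial-sum operators only realize "initial segment" masks, not arbitrary masks $\theta_{a,b}$. This will likely force us to argue through triangular (lower-left) masks and a standard reduction showing that $\mathcal R$-boundedness of triangular truncations of doubly indexed Rademacher sums implies $(\alpha)$.
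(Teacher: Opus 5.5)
Your sufficiency argument follows the paper's own route. Paley conjugation and the pointwise contraction principle reduce matters to $\{\sum_k\pi_{k,j_k}d_k\}$. The identity $\pi_{k,j}=S_{k+1,1}-S_{k,m_k-j}$ for $1\le j<m_k$ makes $\{\pi_{k,j}\}$ $\mathcal R$-bounded by Theorem~\ref{thm:R-bound}. The chain ``UMD, $(\alpha)$ for $L^p(\Gm;X)$, $\mathcal R$-bound, undo $(\alpha)$, UMD'' is exactly the paper's; the Stein step you add is harmless but unnecessary. The UMD half of necessity is also fine.

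The gap is in deriving property $(\alpha)$. The obstacle you identify is real for your construction, but your proposed way around it cannot work. Your frequencies $c_{a,b}$ order the columns $b$ the same way in every row. So each $S_n$ acts on a row as an initial-segment cut-off, and the $\mathcal R$-bound only controls staircase masks $\{(a,b):b<\beta_a\}$. Such masks never yield $(\alpha)$. Relabel the rows so that $\beta_a$ is nondecreasing, and order the variables so that $\varepsilon'_b$ precedes $\varepsilon_a$ if and only if $b<\beta_a$. Each term $\varepsilon_a\varepsilon'_bx_{ab}$ is then a martingale difference at the later of its two variables, and the staircase truncation is the $0$--$1$ martingale transform keeping exactly the $\varepsilon_a$-steps. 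Hence these triangular truncations are bounded in every UMD space. This includes the Schatten classes $S_q$ with $1<q<\infty$, $q\ne2$, which fail $(\alpha)$. So no reduction from triangular masks to $(\alpha)$ exists.

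The missing idea is to make the frequencies row-dependent by modulating the inputs. Let $n_\eta=\sum_{j=1}^N\eta_j(m_{j-1}-1)M_{j-1}$ with $\eta_j\in\{0,1\}$. Since $\pi_{j-1,m_{j-1}-1}=d_{j-1}$ and $\pi_{j-1,0}=0$, \eqref{eq:weisz-partial-sum-identity} gives $\overline{\psi_{n_\eta}}S_{n_\eta}(\psi_{n_\eta}f)=P_{B_{n_\eta}}f=\sum_{j=1}^N\eta_jd_{j-1}f$. Absorb the unimodular factors pointwise by the contraction principle. Then $\mathcal R$-boundedness of $\{S_n\}$ gives, with constant $4\mathcal R_p(\{S_n\})$, $\mathcal R$-boundedness of all $0$--$1$ martingale transforms $\sum_j\eta_{ij}d_{j-1}$. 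The pattern $\eta^{(i)}=(\eta_{ij})_j$ can be chosen independently for each Rademacher index $i$, which is an arbitrary mask.

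Now test on $f_i=\sum_{j=1}^N\rho_jx_{ij}$. Here $\rho_j$ depends only on the coordinate $\omega_{j-1}$ of $\Gm$, is real, symmetric and bounded by $1$, and has $\mathbb E|\rho_j|\ge\frac12$. For example, take $(-1)^{\omega_{j-1}}$ if $m_{j-1}=2$ and $\sin(2\pi\omega_{j-1}/m_{j-1})$ otherwise. Then $\sum_j\eta_{ij}d_{j-1}f_i=\sum_j\eta_{ij}\rho_jx_{ij}$. Using $\rho_j\overset{d}{=}\varepsilon'_j|\rho_j|$, Jensen and the contraction principle, this becomes $\|\sum_{i,j}\eta_{ij}\varepsilon_i\varepsilon'_jx_{ij}\|\lesssim\mathcal R_p(\{S_n\})\|\sum_{i,j}\varepsilon_i\varepsilon'_jx_{ij}\|$ for every $\eta\in\{0,1\}^{N\times N}$. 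Sign matrices, and then real and complex coefficient matrices, follow by convexity.
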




\subsection{Notation}

Unless otherwise stated, we only consider complex Banach spaces.
Throughout, \(\mathbb N:=\{0,1,2,\ldots\}\), and
\(\mathbb T:=\mathbb R/\mathbb Z\) is equipped with normalized Haar
measure. For \(m\ge2\), the cyclic group
\(\mathbb Z_m:=\mathbb Z/m\mathbb Z\) is equipped with normalized
counting measure. For \(a<b\), \([a,b)\) denotes the corresponding
half-open interval, interpreted as a subset of \(\mathbb Z\) or
\(\mathbb N\) when the endpoints are integers. The symbol
\(\mathbf1_E\) denotes the indicator of \(E\), and
\(\operatorname{supp}f\) denotes the support of \(f\). For \(1<p<\infty\), we use \(p'=p/(p-1)\) to denote the conjugate exponent.
For nonnegative quantities \(A\) and \(B\), the notation
\(A\lesssim_\Theta B\) means that
\(A\le C_\Theta B\), where \(C_\Theta\) depends only on the parameters
listed in \(\Theta\). The notation \(A\simeq_\Theta B\) means that both
\(A\lesssim_\Theta B\) and \(B\lesssim_\Theta A\) hold.

We write \(L^p(\Omega;X)\) for the Bochner space of \(X\)-valued
functions, and \(L_0^p(\Omega;X)\) for its mean-zero subspace when
\(\Omega\) is a probability space. The symbol \(\mathcal B(X)\) denotes
the Borel \(\sigma\)-algebra of \(X\). Expectations and probabilities
are denoted by \(\mathbb E\) and \(\mathbb P\), with subscripts used to
indicate the relevant variables. An auxiliary Rademacher sequence is
denoted by \((\varepsilon_k)_k\), and its underlying probability space
is denoted by \(\Omega_\varepsilon\).

\section{Preliminaries}\label{sec:preliminaries}

We begin with the Vilenkin characters, the canonical filtration, and
the associated frequency projections. We then introduce the Paley
conjugation identity and the Banach-space notions used in the
operator estimates.

\subsection{Basic definitions for Vilenkin systems}\label{s2.1}
Let $M_k$ be defined as in \eqref{Mk}.
Every \(n\in\N\) has a unique mixed-radix expansion
\[
n=\sum_{k\ge0}n_kM_k,
\qquad
0\le n_k<m_k,
\]
with only finitely many nonzero digits. We call \(n_k\) the \(k\)-th
digit of \(n\). For \(a,b\in\mathbb N\), define digitwise addition and
subtraction by
\[
a\oplus b
:=
\sum_{k\ge0}\bigl((a_k+b_k)\bmod m_k\bigr)M_k,
\qquad
a\ominus b
:=
\sum_{k\ge0}\bigl((a_k-b_k)\bmod m_k\bigr)M_k.
\]
Thus \((\mathbb N,\oplus)\) is naturally identified with the dual
group of \(\Gm\).

For \(x=(x_k)_{k\ge0}\in\Gm\), define
\[
r_k(x):=e^{2\pi ix_k/m_k},
\qquad
\psi_n(x):=\prod_{k\ge0}r_k(x)^{n_k}.
\]
Then \((\psi_n)_{n\ge0}\) is the Vilenkin character system. In the
Walsh case, \(m_k=2\) for every \(k\), and
\[
r_k=\psi_{2^k},
\qquad
r_k(x)\in\{-1,1\};
\]
these are the classical Rademacher functions.
For an \(X\)-valued Vilenkin polynomial \(f\), we write
\[
\widehat f(n)
:=
\int_{\Gm}f(x)\overline{\psi_n(x)}\,d\mu(x),
\qquad
P_Af
:=
\sum_{n\in A}\widehat f(n)\psi_n.
\]
The character identities
\[
\psi_a\psi_b=\psi_{a\oplus b},
\qquad
\overline{\psi_a}=\psi_{0\ominus a}
\]
give the modulation identity
\begin{equation}\label{eq:modulation-identity}
P_{c\oplus A}f
=
\psi_cP_A(\overline{\psi_c}f),
\qquad
c\in\mathbb N,\qquad A\subseteq\mathbb N.
\end{equation}

We now introduce the canonical filtration on the Vilenkin system. Let
\[
\cF_k:=\sigma(x_0,\ldots,x_{k-1}),
\qquad k\ge1,
\]
and let \(\cF_0\) be the trivial \(\sigma\)-algebra. The corresponding
conditional expectation is
\(
E_k=P_{[0,M_k)}.
\)
The finer blocking is defined by
\[
\delta_{k,\ell}:=[\ell M_k,(\ell+1)M_k),
\qquad
\Delta_{k,\ell}:=P_{\delta_{k,\ell}},\qquad k\ge0,\quad 1\le\ell<m_k.
\]
The \(k\)-th martingale-difference operator is therefore
\begin{equation*}
d_k:=E_{k+1}-E_k
=
\sum_{\ell=1}^{m_k-1}\Delta_{k,\ell},
\qquad k\ge0.
\end{equation*}
In \cite{ClementDePagterSukochevWitvliet2000}, $(d_k)_k$ is called
the coarse blocking, in contrast to the finer blocking
$\{\Delta_{k,\ell}\}_{k,\ell}$.

Since
\(\delta_{k,\ell}=\ell M_k\oplus[0,M_k)\),
\eqref{eq:modulation-identity} gives
\begin{align}\label{finer}
    \Delta_{k,\ell}f= \Delta_{k,\ell}d_kf
=
\psi_{\ell M_k}E_k(\overline{\psi_{\ell M_k}}d_kf).
\end{align}

\subsection{Paley conjugation identity}\label{s2.2}
We now introduce the Paley conjugation identity. It appears in the
Schauder-decomposition framework of Cl\'ement, de Pagter, Sukochev, and
Witvliet~\cite[Section~4]{ClementDePagterSukochevWitvliet2000}; see also
Young~\cite[pp.~312--313]{Young1976} for the corresponding Dirichlet
kernel decomposition. For
\[
n
=
\sum_{k=0}^Kn_kM_k,
\qquad
0\le n_k<m_k,
\]
define
\begin{equation}\label{eq:paley-terminal-set}
B_n
:=
\bigcup_{\substack{0\le k\le K\\n_k\ne0}}
\ \bigcup_{\ell=m_k-n_k}^{m_k-1}
\delta_{k,\ell}.
\end{equation}
Define the projection
\begin{equation}\label{eq:pi_k,a}
\pi_{k,a}
:=
\sum_{\ell=m_k-a}^{m_k-1}\Delta_{k,\ell},
\qquad 0\le a<m_k,\qquad
\pi_{k,0}:=0.
\end{equation}

\begin{lemma}[Paley conjugation identity]
\label{lem:paley-conjugation-identity}
For every \(\gamma\in\mathbb N\),
\begin{equation}\label{eq:paley-digit-criterion}
0\le\gamma<n
\quad\Longleftrightarrow\quad
\gamma\ominus n\in B_n.
\end{equation}
Consequently,
\begin{equation}\label{eq:paley-conjugation-operator}
S_nf
=
\psi_nP_{B_n} 
\bigl(\overline{\psi_n}f\bigr),
\end{equation}
and
\begin{equation}\label{eq:paley-terminal-martingale-decomposition}
P_{B_n}f
=\sum_{k=0}^K
\pi_{k,n_k}f=
\sum_{k=0}^K
\pi_{k,n_k}d_kf.
\end{equation}
\end{lemma}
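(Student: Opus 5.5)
The plan is to verify the digit criterion \eqref{eq:paley-digit-criterion} by a direct mixed-radix computation. The two operator identities then follow formally from the modulation identity \eqref{eq:modulation-identity} and the structure of the blocks $\delta_{k,\ell}$.

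\emph{The digit criterion.} Fix $\gamma\in\mathbb N$ and set $\eta:=\gamma\ominus n$, so that $\eta_k=(\gamma_k-n_k)\bmod m_k$ for every $k$.

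First, if $\gamma\ge M_{K+1}$, then $\gamma\ge n$. In this case $\gamma$ has a nonzero digit $\gamma_j$ at some $j>K$, and $n_j=0$, so $\eta_j=\gamma_j\neq0$. Hence $\eta\notin[0,M_{K+1})$. Since $B_n\subseteq[0,M_{K+1})$, we get $\eta\notin B_n$, as required.

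So assume $\gamma<M_{K+1}$, and let $j$ be the largest index $\le K$ with $\gamma_j\ne n_j$. If no such $j$ exists, then $\gamma=n$, $\eta=0$, and $0\notin B_n$ because every $\delta_{k,\ell}$ has $\ell\ge1$. Otherwise the top nonzero digit of $\eta$ sits at position $j$, so $\eta\in\delta_{j,\eta_j}$. Mixed-radix comparison gives $\gamma<n$ if and only if $\gamma_j<n_j$. For digits in $[0,m_j)$, we have $\gamma_j<n_j$ if and only if $\eta_j=\gamma_j-n_j+m_j\in[m_j-n_j,m_j-1]$. Also, $\gamma_j<n_j$ forces $n_j\neq0$, which is exactly the condition under which the index $j$ contributes to \eqref{eq:paley-terminal-set}. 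In the other case $\gamma_j>n_j$, we get $\eta_j=\gamma_j-n_j\in[1,m_j-1-n_j]$, which lies outside $[m_j-n_j,m_j)$. Finally, the blocks $\delta_{k,\ell}$ with $k\ge0$ and $1\le\ell<m_k$ are pairwise disjoint, and $\eta$ lies in exactly one of them. This gives $\eta\in B_n$ if and only if $0\le\gamma<n$.

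\emph{The operator identities.} The criterion says that $[0,n)=n\oplus B_n$, using that $\gamma\mapsto\gamma\ominus n$ is a bijection of $\mathbb N$ with inverse $\eta\mapsto n\oplus\eta$. The modulation identity \eqref{eq:modulation-identity} with $c=n$ and $A=B_n$ then gives $S_nf=\psi_nP_{B_n}(\overline{\psi_n}f)$, which is \eqref{eq:paley-conjugation-operator}.

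For \eqref{eq:paley-terminal-martingale-decomposition}, the disjointness of the blocks gives $P_{B_n}=\sum_{k\le K,\,n_k\ne0}\sum_{\ell=m_k-n_k}^{m_k-1}\Delta_{k,\ell}=\sum_{k=0}^K\pi_{k,n_k}$, using the convention $\pi_{k,0}=0$. Each $\Delta_{k,\ell}$ satisfies $\Delta_{k,\ell}=\Delta_{k,\ell}d_k$ by \eqref{finer}. Summing over $\ell$ gives $\pi_{k,a}=\pi_{k,a}d_k$, which yields the last equality.

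The main obstacle is the bookkeeping in the digit criterion: the inequality $\gamma<n$ has to be read off from the single top differing digit, while digits above $K$ and the case $n_k=0$ are handled consistently. Everything else is a formal consequence of the modulation identity.
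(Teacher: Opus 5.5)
Your proof is correct and follows essentially the same route as the paper. You read off $\gamma<n$ from the top differing digit, note that the corresponding digit of $\gamma\ominus n$ is $m_k+\gamma_k-n_k\in[m_k-n_k,m_k-1]$ exactly when $\gamma_k<n_k$, and deduce both operator identities from $[0,n)=n\oplus B_n$, the modulation identity, and the disjointness of the blocks inside the spectrum of $d_k$; your separate treatment of $\gamma\ge M_{K+1}$ only spells out a case the paper's converse covers implicitly.
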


\begin{proof}
We give a short proof for completeness.
If \(\gamma=n\), then
\(\gamma\ominus n=0\notin B_n\), so both sides of
\eqref{eq:paley-digit-criterion} are false. Suppose that
\(\gamma\ne n\), and let \(k\) be the largest index for which
\(\gamma_k\ne n_k\). Then
\[
\gamma<n
\quad\Longleftrightarrow\quad
\gamma_k<n_k.
\]
In this case, the largest nonzero digit of \(\gamma\ominus n\) is the
\(k\)-th digit, and its value is
\[
m_k+\gamma_k-n_k
\in
\{m_k-n_k,\ldots,m_k-1\}.
\]
Hence \(\gamma\ominus n\in B_n\).

Conversely, suppose that the largest nonzero digit of
\(\gamma\ominus n\) is the \(k\)-th digit and that
\(\gamma\ominus n\in\delta_{k,\ell}\) for one of the intervals occurring
in \(B_n\). We must then have
\(\gamma_k<n_k\). Otherwise,
\[
0
\le
\gamma_k-n_k
<
m_k-n_k,
\]
which would imply
\(\gamma\ominus n\notin\delta_{k,\ell}\), a contradiction. Therefore,
\(\gamma<n\).

The criterion is equivalent to
\[
[0,n)=n\oplus B_n.
\]
Thus \eqref{eq:paley-conjugation-operator} follows from the modulation
identity \eqref{eq:modulation-identity}. Finally, the blocks in
\eqref{eq:paley-terminal-set} are pairwise disjoint, each lies in the
Fourier support of \(d_k\), and the Fourier--Vilenkin projection
associated with
\[
\bigcup_{\ell=m_k-n_k}^{m_k-1}
\delta_{k,\ell}
\]
is precisely \(\pi_{k,n_k}\). This proves
\eqref{eq:paley-terminal-martingale-decomposition}.
\end{proof}

\subsection{UMD property and \texorpdfstring{$\mathcal R$}{R}-boundedness}
\label{sec:umd-and-r-boundedness}
We recall the notions of the UMD property and $\mathcal R$-boundedness, and the properties needed below. Standard
references are the monographs of Pisier~\cite{Pisier2016} and
Hyt\"onen et~al.~\cite{HNVWI,HNVWII}.

\begin{definition}[\(\operatorname{UMD}_p\) property]
Let \(1<p<\infty\). A Banach space \(X\) is said to have the
\emph{\(\operatorname{UMD}_p\) property} if there exists a constant
\(C<\infty\) such that
\begin{equation}\label{UMD}
    \left\|
\sum_{k=1}^n\theta_kd_k
\right\|_{L^p(\Omega;X)}
\le
C
\left\|
\sum_{k=1}^nd_k
\right\|_{L^p(\Omega;X)}
\end{equation}
for every filtered probability space, every finite \(X\)-valued
martingale difference sequence \((d_k)_{k=1}^n\), and every choice of
signs \(\theta_k\in\{-1,1\}\). The infimum of $C$ is denoted by $\beta_{p,X}.$
\end{definition}


The definition of the UMD$_p$ property is independent of the exponent $p,$ which is recorded as the following proposition.

\begin{proposition}\label{prop:UMD}
  The $\operatorname{UMD}_p$ property does not depend on the choice of exponent $p.$ Equivalently, if \eqref{UMD} holds for one
\(p\in(1,\infty)\), then it holds for every \(p\in(1,\infty)\).
\end{proposition}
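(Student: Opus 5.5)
The plan is the classical extrapolation argument of Maurey and Burkholder: show that \(\operatorname{UMD}_p\) for one \(p\in(1,\infty)\) implies a weak-type \((1,1)\) estimate for martingale transforms, and then obtain every \(q\in(1,\infty)\) by interpolation and duality.

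\emph{Step 1: reductions.} By a standard approximation we may restrict to finite martingales on atomic filtrations, for instance Paley--Walsh martingales, or more generally martingales whose \(\sigma\)-algebras are generated by finite partitions. Write a martingale transform as \(T_\theta f=\sum_k\theta_kd_k\) with \(\theta_k\in\{-1,1\}\). Fix \(p\) for which \eqref{UMD} holds with constant \(\beta_{p,X}\). Interpolation will require multipliers \(\theta_k\in[-1,1]\), or predictable ones. Both are obtained from the sign case by convexity: a scalar in \([-1,1]\) is an average of signs. Predictable multipliers are handled by the standard refinement of the filtration.

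\emph{Step 2: weak \((1,1)\) via a Gundy decomposition.} This is the main step. We show
\[
\lambda\,\mathbb P\Bigl(\sup_n\Bigl\|\sum_{k\le n}\theta_kd_k\Bigr\|>\lambda\Bigr)\lesssim_p\beta_{p,X}\|f\|_{L^1(\Omega;X)} .
\]
The tool is Gundy's decomposition of the \(X\)-valued \(L^1\)-bounded martingale \(f\) at height \(\lambda\), namely \(f=g+b+h\), where:
\begin{itemize}
\item \(g\) has \(\|g\|_\infty\lesssim\lambda\) and \(\|g\|_1\lesssim\|f\|_1\);
\item \(b\) has differences with \(\sum_k\|db_k\|_1\lesssim\|f\|_1\);
\item \(h\) has differences supported on a set of probability at most \(\|f\|_1/\lambda\).
\end{itemize}
The stopping time is \(\tau=\inf\{n:\|f_n\|>\lambda\}\). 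To ensure that the jump \(d_\tau\) can be controlled, we use the atomic filtration or the Davis decomposition.

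The three pieces are then estimated as follows. For \(g\), apply \eqref{UMD} in \(L^p\) together with Chebyshev's inequality and \(\|g\|_p^p\le\lambda^{p-1}\|g\|_1\), using Doob's inequality for the maximal function. For \(b\), use \(\sum_k\|db_k\|_1\) and the triangle inequality. For \(h\), the transform vanishes off a small set. The main obstacle is producing the Gundy decomposition for Banach-valued martingales in the form needed. Here only norms \(\|f_n\|\) and stopping times are used, so the scalar construction goes through verbatim.

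\emph{Step 3: interpolation and duality.} Marcinkiewicz interpolation between the weak \((1,1)\) bound and the strong \((p,p)\) bound gives \eqref{UMD} for all \(1<q\le p\). To apply it legitimately, we fix the sequence \(\theta\) and the filtration, so that \(T_\theta\) is a single linear (indeed sublinear) operator on \(L^1\cap L^p\). For \(q>p\), use duality: on a finite filtration, \(X\) being \(\operatorname{UMD}_q\) is equivalent to \(X^*\) being \(\operatorname{UMD}_{q'}\), since \(T_\theta\) is self-adjoint with respect to the pairing \(\mathbb E\langle f,g\rangle\) and \(L^{q'}(\Omega;X^*)\) norms \(L^q(\Omega;X)\) for simple functions. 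Applying the first half of the argument to \(X^*\) covers all exponents below \(p'\); dualizing back then yields every \(q>p\), which completes the proof.
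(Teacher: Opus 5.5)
The paper gives no proof of this proposition. It quotes it as the classical theorem of Maurey and Burkholder and refers to the monographs of Pisier and of Hyt\"onen, van Neerven, Veraar and Weis. Your outline is the standard extrapolation argument, and its architecture is sound:
\begin{itemize}
\item weak type $(1,1)$ via a Gundy decomposition;
\item Marcinkiewicz interpolation for $1<q<p$;
\item duality between $\operatorname{UMD}_q$ for $X$ and $\operatorname{UMD}_{q'}$ for $X^*$ for $q>p$. This step is fine: $L^q(\Omega;X)$ and $L^{q'}(\Omega;X^*)$ norm each other, and $T_\theta$ is self-adjoint for the pairing.
\end{itemize}
The reductions of Step~1 are superfluous. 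Everything can be done for one fixed filtration and one fixed sign sequence, so no $[-1,1]$-valued or predictable multipliers are needed. Restricting to Paley--Walsh martingales would in fact force you to invoke the nontrivial fact that such martingales suffice for the UMD property.

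The step you misdiagnose is the jump at $\tau=\inf\{n:\|f_n\|>\lambda\}$. Its size never has to be controlled, and neither of your devices would control it. No property of the filtration bounds $\|d_\tau\|$ by a multiple of $\lambda$: already a Paley--Walsh martingale can jump from $0$ to $\pm M$. The Davis decomposition costs $\mathbb E\sup_n\|d_n\|$, which is not bounded by $\sup_n\|f_n\|_1$.

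Instead, proceed as follows.
\begin{itemize}
\item Put $d_n\mathbf 1_{\{\tau<n\}}$ into $h$.
\item Put $d_n\mathbf 1_{\{\tau=n\}}-\mathbb E(d_n\mathbf 1_{\{\tau=n\}}\mid\mathcal F_{n-1})$ into $b$. Optional sampling for the submartingale $(\|f_n\|)_n$ and Doob's inequality give $\sum_n\|db_n\|_1\le 2\,\mathbb E\bigl(\|d_\tau\|\mathbf 1_{\{\tau\le N\}}\bigr)\le 2\bigl(\|f_N\|_1+\lambda\,\mathbb P(\tau\le N)\bigr)\le 4\|f_N\|_1$.
\item The remaining martingale has $dg_n=d_n\mathbf 1_{\{\tau>n\}}-\mathbb E(d_n\mathbf 1_{\{\tau>n\}}\mid\mathcal F_{n-1})$. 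It satisfies $\|dg_n\|_\infty\le 4\lambda$ and $\|g_N\|_1\le 7\|f_N\|_1$. It need not be bounded by a multiple of $\lambda$, however, because its predictable compensator can accumulate.
\item Stop $g$ once more at $\sigma=\inf\{n:\|g_n\|>\lambda\}$. Move $g-g^\sigma$ into $h$; it lives on $\{\sigma\le N\}$, which has probability at most $7\|f_N\|_1/\lambda$.
\end{itemize}
This yields $\|g^\sigma\|_\infty\le 5\lambda$ and $\|g^\sigma_N\|_1\le 7\|f_N\|_1$, which is exactly what your estimate of the good part uses.

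With this correction the argument is complete. If you want the weak-type constant to be linear in $\beta_{p,X}$, take the decomposition at height $\lambda/\beta_{p,X}$. As you say, the argument uses only norms, stopping times and conditional expectations, so it holds verbatim for $X$-valued martingales.
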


\begin{remark}\label{Riesz}
From the above proposition, a Banach space $X$ is said to have UMD property if it satisfies \eqref{UMD}.
    There is an important characterization of the UMD property: A Banach space $X$ is \(\operatorname{UMD}\) if and only if the Riesz projection $P_+^{\mathbb T}$ is bounded on $L^p(\mathbb T;X)$ for some (equivalently for every) $p\in(1,\infty)$, where $P_+^{\mathbb T}$ is defined by
    $$P_+^{\mathbb T}\left(\sum_{n\in\mathbb Z}x_ne^{2\pi in\cdot}\right)=\sum_{n\ge 0}x_ne^{2\pi in\cdot}$$ for any $X$-valued trigonometric polynomial $\sum_{n\in\mathbb Z}x_ne^{2\pi in\cdot}.$ Moreover, we have $$\beta_{p,X}^{1/2}\lesssim_p\|P_+^{\mathbb T}\|_{L^p(\mathbb T;X)\to L^p(\mathbb T;X)}\lesssim_p \beta_{p,X}^2.$$ See \cite{Bourgain1983,HNVWI}. A recent work \cite{LoristVanNeerven2026} shows that the above dependence is sharp.
\end{remark}

We also use the following fact; see e.g. \cite[Example 4.3]{DirksenMaasVanNeerven2013}.
\begin{lemma}\label{Bochner1}
   Let $X$ be a Banach space with the $\operatorname{UMD}$ property. For any $1<p<\infty$ and any measure space $(S,\mu),$ $L^p(S;X)$ has the $\operatorname{UMD}$ property with $\beta_{p,L^p(S;X)}\le \beta_{p,X}.$
\end{lemma}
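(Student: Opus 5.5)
The statement to prove is Lemma~\ref{Bochner1}: $L^p(S;X)$ is UMD with $\beta_{p,L^p(S;X)}\le\beta_{p,X}$. The plan is a Fubini argument that works fiberwise in $s\in S$.

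Fix a filtered probability space $(\Omega,\mathcal A,(\mathcal A_k)_k,\mathbb P)$, a finite $L^p(S;X)$-valued martingale difference sequence $(D_k)_{k=1}^n$, and signs $\theta_k\in\{-1,1\}$. First I reduce to a convenient dense class. By density of simple functions and the boundedness of conditional expectations, I may assume each $D_k$ is $L^p(S;X)$-simple. Then each $D_k$ takes finitely many values in $L^p(S;X)$.

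Using the isometric identification $L^p(\Omega;L^p(S;X))\cong L^p(\Omega\times S;X)$, I choose jointly measurable representatives $d_k(\omega,s)$. Here is the key point to verify, which I expect to be the only technical step. For $\mu$-almost every $s$, the sequence $(d_k(\cdot,s))_k$ should be an $X$-valued martingale difference sequence with respect to $(\mathcal A_k)$ in $L^p(\Omega;X)$.

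This holds because conditional expectation on $L^p(\Omega;L^p(S;X))$ acts fiberwise:
\[
\bigl(\mathbb E[F\mid\mathcal A_{k-1}]\bigr)(\cdot,s)=\mathbb E[F(\cdot,s)\mid\mathcal A_{k-1}]\quad\text{for a.e. } s.
\]
To check this, use the simple-function reduction. Write $F=\sum_j\mathbf 1_{A_j}\,g_j$ with $g_j\in L^p(S;X)$. Both sides then equal $\sum_j\mathbb P(A_j\mid\mathcal A_{k-1})\,g_j(s)$. Only finitely many $k$ occur, so the exceptional null sets can be united. Moreover $\mathbb E\|d_k(\cdot,s)\|^p<\infty$ for a.e. $s$ by Fubini.

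Applying the definition of $\beta_{p,X}$ to each fiber and integrating over $S$ gives
\begin{align*}
\Bigl\|\sum_k\theta_kD_k\Bigr\|_{L^p(\Omega;L^p(S;X))}^p
&=\int_S\Bigl\|\sum_k\theta_kd_k(\cdot,s)\Bigr\|_{L^p(\Omega;X)}^p\,d\mu(s)\\
&\le\beta_{p,X}^p\int_S\Bigl\|\sum_kd_k(\cdot,s)\Bigr\|_{L^p(\Omega;X)}^p\,d\mu(s)
=\beta_{p,X}^p\Bigl\|\sum_kD_k\Bigr\|_{L^p(\Omega;L^p(S;X))}^p.
\end{align*}
Both equalities are Fubini together with the isometric identification above.

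To pass from the simple case to general $(D_k)$, I approximate each $D_k$ in $L^p$ by simple functions. Then I re-martingalize by setting $\tilde D_k=\mathbb E_k\tilde D-\mathbb E_{k-1}\tilde D$, where $\tilde D$ is a simple approximant of $\sum_kD_k$. Both sides of the inequality are continuous in $L^p$, so the estimate passes to the limit. Taking the infimum over admissible constants gives $\beta_{p,L^p(S;X)}\le\beta_{p,X}$.

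If $\mu$ is not $\sigma$-finite, I will first note that each $D_k$ is supported, up to null sets, in a $\sigma$-finite subset of $S$, since it is strongly measurable. This lets Fubini be applied as above.
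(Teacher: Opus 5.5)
Your argument is correct: the paper gives no proof of this lemma (it only cites Dirksen--Maas--van Neerven), and your fiberwise Fubini reduction is the standard proof of it. The one phrase to adjust is ``I may assume each $D_k$ is simple.'' The re-martingalized $\tilde D_k=\mathbb E_k\tilde D-\mathbb E_{k-1}\tilde D$ are not simple in general. However, your fiberwise identity applied to the simple function $\tilde D$ gives $\tilde D_k(\cdot,s)=\mathbb E_k[\tilde D(\cdot,s)]-\mathbb E_{k-1}[\tilde D(\cdot,s)]$, so each fiber is still an $X$-valued martingale difference sequence and the rest goes through unchanged.
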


Now we introduce $\mathcal R$-boundedness for a family of operators.

\begin{definition}[$\mathcal R$-boundedness]
Let \(X\) and \(Y\) be Banach spaces, and let
\(\mathcal L(X,Y)\) denote the space of bounded linear operators from
\(X\) to \(Y\). A family
\[
\mathcal T=\{T_j:j\in\Lambda\}\subseteq\mathcal L(X,Y)
\]
is said to be \emph{\(\mathcal R\)-bounded} if, for some
\(1\le q<\infty\), there exists a constant \(C<\infty\) such that
\begin{equation}\label{def:R-bound}
\left\|
\sum_{a=1}^N\varepsilon_aT_{j_a}x_a
\right\|_{L^q(\Omega_\varepsilon;Y)}
\le
C
\left\|
\sum_{a=1}^N\varepsilon_ax_a
\right\|_{L^q(\Omega_\varepsilon;X)}
\end{equation}
for every \(N\ge1\), every \(j_1,\ldots,j_N\in\Lambda\), and every
\(x_1,\ldots,x_N\in X\). For a fixed exponent \(q\), the infimum of $C$ is denoted by $\mathcal R_q(\mathcal T).$
\end{definition}

We shall use the following two basic properties of  \(\mathcal R\)-boundedness.
\begin{proposition}\label{lem:R}
Let \(X\) and \(Y\) be Banach spaces, and let
\(\mathcal T\subseteq\mathcal L(X,Y)\).

\begin{enumerate}
\item[(i)]
Suppose that, for some \(1\le q_0<\infty\), $\mathcal R_{q_0}(\mathcal T)<\infty$. Then, for any \(1\le q<\infty\), $$\mathcal R_q(\mathcal T)\simeq_{q,q_0} \mathcal R_{q_0}(\mathcal T).$$
Consequently, the definition of $\mathcal R$-boundedness is independent
of the Rademacher exponent.

\item[(ii)]
Define the absolutely convex hull of \(\mathcal T\) by
\[
\operatorname{aconv}(\mathcal T)
:=
\left\{
\sum_{j=1}^M\alpha_jT_j:
M\ge1,\ T_j\in\mathcal T,\ 
\sum_{j=1}^M|\alpha_j|\le1
\right\}.
\]
If \(\mathcal T\) is $\mathcal R$-bounded, then
\(\operatorname{aconv}(\mathcal T)\) is $\mathcal R$-bounded with
\[
\mathcal R_q\bigl(\operatorname{aconv}(\mathcal T)\bigr)
\le 2
\mathcal R_q(\mathcal T).
\]
\end{enumerate}
\end{proposition}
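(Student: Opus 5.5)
Both parts of Proposition~\ref{lem:R} are standard facts about Rademacher sums and I would prove them directly.

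For (i), the plan is to invoke the Kahane--Khintchine inequality. For any Banach space $Z$, any $1\le q,q_0<\infty$ and any finitely many $z_a\in Z$,
\[
\Bigl\|\sum_a\varepsilon_az_a\Bigr\|_{L^q(\Omega_\varepsilon;Z)}\simeq_{q,q_0}\Bigl\|\sum_a\varepsilon_az_a\Bigr\|_{L^{q_0}(\Omega_\varepsilon;Z)} .
\]
I apply this with $Z=Y$ and $z_a=T_{j_a}x_a$ on the left-hand side of \eqref{def:R-bound}. I then use the $q_0$-estimate. Finally I apply Kahane--Khintchine again with $Z=X$ and $z_a=x_a$. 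This gives $\mathcal R_q(\mathcal T)\le K_{q,q_0}K_{q_0,q}\mathcal R_{q_0}(\mathcal T)$, and exchanging the roles of $q$ and $q_0$ gives the reverse bound. There is no real obstacle; the only care needed is to track that the constants depend only on $q$ and $q_0$.

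For (ii), the plan is a convexity argument. First I reduce to the case where all $N$ operators are built from a common finite list. Take $S_a=\sum_{j}\alpha_{a,j}T_{j}$ with $\sum_j|\alpha_{a,j}|\le1$. By padding with zero coefficients, I can assume every $S_a$ uses the same $M$ operators $T_1,\dots,T_M$.

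Next I write each coefficient as $\alpha_{a,j}=|\alpha_{a,j}|\theta_{a,j}$ with $|\theta_{a,j}|=1$. For fixed $x_a$, consider the map
\[
(\lambda_{a,j})\mapsto \Bigl\|\sum_a\varepsilon_a\sum_j\lambda_{a,j}T_jx_a\Bigr\|_{L^q(\Omega_\varepsilon;Y)} .
\]
This map is convex. The coefficient vector $(\alpha_{a,j})_j$ of each row lies in the absolutely convex hull of the vectors $\{\theta e_j\}$ together with $0$. Hence the norm is bounded by the maximum over extreme configurations, in which each row $a$ selects a single operator $T_{j(a)}$ with a unimodular multiplier $\theta_a$.

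The complex multipliers are what produce the factor $2$. I split $\theta_a$ into real and imaginary parts, each in $[-1,1]$. For real coefficients in $[-1,1]$, Kahane's contraction principle gives
\[
\Bigl\|\sum_a\varepsilon_a t_a y_a\Bigr\|\le\Bigl\|\sum_a\varepsilon_a y_a\Bigr\| .
\]
Applying this to $y_a=T_{j(a)}x_a$, and then the $\mathcal R$-bound of $\mathcal T$, gives the factor $\mathcal R_q(\mathcal T)$ for the real part and the same factor for the imaginary part. Adding them yields $2\mathcal R_q(\mathcal T)$.

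The one step needing care is the extreme-point reduction. I would carry it out row by row: for a fixed row $a$, the expression is convex in $(\alpha_{a,j})_j$ on the $\ell^1$-ball, so it is maximised at a vertex $\theta e_j$ or at $0$. Iterating this over $a=1,\dots,N$ reaches a fully extreme configuration without losing any constant.
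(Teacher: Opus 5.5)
Your proof is correct. The paper gives no proof of this proposition; it records both parts as standard facts from the cited monographs of Pisier and Hyt\"onen--van Neerven--Veraar--Weis. Your argument is the standard one: for (i), Kahane--Khintchine applied on both sides; for (ii), the complex contraction principle combined with convexity in the coefficients, where the factor $2$ comes from splitting unimodular multipliers into real and imaginary parts.

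The only cosmetic difference from the textbook version is that your row-by-row extreme-point iteration is usually replaced by an explicit convex-combination identity. Once each row has been padded to total weight $1$ with a zero term, one has
\[
\sum_{a=1}^N\varepsilon_aS_ax_a=\sum_{j_1,\dots,j_N}\Bigl(\prod_{b=1}^N|\alpha_{b,j_b}|\Bigr)\sum_{a=1}^N\varepsilon_a\theta_{a,j_a}T_{j_a}x_a ,
\]
and the triangle inequality then finishes the argument.
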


\subsection{Uniform boundedness of partial-sum operators on bounded Vilenkin systems}\label{sec:proof of bounded}
Here we provide a detailed argument for the case of bounded Vilenkin systems to illustrate the novelties of the present paper.

By the Paley conjugation identity---Lemma \ref{lem:paley-conjugation-identity}, one rewrites $S_nf$ as
\begin{align*}
    S_nf&{=}\psi_n\sum_{0\le k\le K}\sum_{\ell=m_k-n_k}^{m_k-1}\Delta_{k,\ell}(\overline{\psi_n}f)=\psi_n\sum_{\ell=1}^{\sup_km_k-1}\sum_{\substack{0\le k\le K\\ m_k-n_k\le \ell\le m_k-1}}\Delta_{k,\ell}(\overline{\psi_n}f).
\end{align*}
By the triangle inequality, one may put the sum over $\ell$ outside of the norm,
\begin{align*}
 \|S_nf\|_{L^p(\Gm;X)}\le \sum_{\ell=1}^{\sup_km_k-1}\left\|\sum_{\substack{0\le k\le K\\ m_k-n_k\le \ell\le m_k-1}}\Delta_{k,\ell}(\overline{\psi_n}f)\right\|_{L^p(\Gm;X)}.
 \end{align*}
Now note that for a fixed $\ell\ge1$ and each $k$ with $\ell<m_k$, $\Delta_{k,\ell}(g)$ is $\mathcal F_{k+1}$-measurable and has conditional expectation zero onto $\mathcal F_k$. Thus, one may use the UMD property and the complex contraction principle (see \cite[Proposition~3.2.10]{HNVWI}) to remove the restriction $m_k-n_k\le\ell$ while retaining $\ell<m_k$,
 \begin{align*}
 \|S_nf\|_{L^p(\Gm;X)}&\lesssim \beta_{p,X}\sum_{\ell=1}^{\sup_km_k-1}\left\|\sum_{\substack{0\le k\le K\\ m_k-n_k\le \ell\le m_k-1}}\varepsilon_k\Delta_{k,\ell}(\overline{\psi_n}f)\right\|_{L^p(\Gm\times \Omega_\varepsilon;X)}\\
 &\lesssim \beta_{p,X}\sum_{\ell=1}^{\sup_km_k-1}\left\|\sum_{\substack{0\le k\le K\\ \ell<m_k}}\varepsilon_k\Delta_{k,\ell}(\overline{\psi_n}f)\right\|_{L^p(\Gm\times \Omega_\varepsilon;X)}.
 \end{align*}
We now use the identity \eqref{finer} and apply the vector-valued Stein inequality (see e.g. \cite[Theorem~4.2.23]{HNVWI}), the complex contraction principle, and the UMD property to conclude that
\begin{align*}
  \|S_nf\|_{L^p(\Gm;X)}
 &\lesssim \beta_{p,X}\sum_{\ell=1}^{\sup_km_k-1}\left\|\sum_{\substack{0\le k\le K\\ \ell<m_k}}\varepsilon_k
\psi_{\ell M_k}E_k(\overline{\psi_{\ell M_k}}d_k(\overline{\psi_n}f))\right\|_{L^p(\Gm\times \Omega_\varepsilon;X)}\\
 &\lesssim \beta_{p,X}\sum_{\ell=1}^{\sup_km_k-1}\left\|\sum_{\substack{0\le k\le K\\ \ell<m_k}}\varepsilon_k
E_k(\overline{\psi_{\ell M_k}}d_k(\overline{\psi_n}f))\right\|_{L^p(\Gm\times \Omega_\varepsilon;X)}\\
 &\lesssim \beta_{p,X}^2\sum_{\ell=1}^{\sup_km_k-1}\left\|\sum_{\substack{0\le k\le K\\ \ell<m_k}}\varepsilon_k
\overline{\psi_{\ell M_k}}d_k(\overline{\psi_n}f)\right\|_{L^p(\Gm\times \Omega_\varepsilon;X)}\\
 &\lesssim (\sup_k m_k) \beta_{p,X}^3 \|\overline{\psi_n}f\|_{L^p(\Gm;X)}\lesssim (\sup_k m_k) \beta_{p,X}^3 \|f\|_{L^p(\Gm;X)}.
\end{align*}

\section{From cyclic projections decoupling property to Vilenkin partial sums}
\label{sec:CPD-to-partial-sums}

In this section, we first introduce the definition of the {\it cyclic projections decoupling property}, and state the key characterization of the UMD property---Theorem \ref{thm:umd-implies-CPD}. We then recall McConnell's decoupling inequality, and combine it with the CPD property to prove the desired inequality for terminal projections $(\pi_{k,n_k})_k$. Finally, we invoke the Paley conjugation identity from Section~\ref{s2.2}, which expresses a
partial-sum operator in terms of terminal projections acting on martingale differences, to complete the proof of Theorem~\ref{thm:uniform-umd-vilenkin-partial-sums}.

\subsection{Cyclic projections decoupling property}
In this subsection, we introduce a geometric property of Banach spaces,
which we call the \emph{cyclic projections decoupling property}, abbreviated
as the \(\operatorname{CPD}\) property.
For an \(X\)-valued function
\(h:\mathbb Z_m\to X\), we define its Fourier transform by
\[
\widehat h(q)
:=
\frac1m\sum_{j=0}^{m-1}h(j)e^{-2\pi iqj/m},
\qquad q\in\mathbb Z_m.
\]
For \(0\le a<m\), the terminal cyclic Fourier projection is defined by
\begin{equation}\label{def of Q}
Q_{m,a}h(j)
:=
\sum_{q=m-a}^{m-1}
\widehat h(q)e^{2\pi iqj/m},
\qquad
Q_{m,0}:=0.
\end{equation}
We also write
\[
L_0^p(\mathbb Z_m;X)
:=
\left\{
h\in L^p(\mathbb Z_m;X):
\frac1m\sum_{j=0}^{m-1}h(j)=0
\right\}.
\]

The cyclic projections decoupling property requires these terminal
projections to act uniformly on sums of independent mean-zero
functions, as follows.

\begin{definition}[Cyclic projections decoupling property]\label{def:CPD-property}
Let \(1<p<\infty\). A Banach space \(X\) is said to have the
\(\operatorname{CPD}_p\) property if there exists a constant
\(C<\infty\) such that, for every \(N\ge1\), every choice
of integers \(m_k\ge2\), every \(0\le a_k<m_k\) and
\(
h_k\in L_0^p(\mathbb Z_{m_k};X),1\le k\le N,
\)
one has
\begin{equation}\label{eq:CPD-property}
\left\|
\sum_{k=1}^N Q_{m_k,a_k}h_k(y_k)
\right\|_{L^p\left(\prod_{k=1}^N\mathbb Z_{m_k};X\right)}
\le
C
\left\|
\sum_{k=1}^N h_k(y_k)
\right\|_{L^p\left(\prod_{k=1}^N\mathbb Z_{m_k};X\right)}.
\end{equation}
The infimum of $C$ is denoted by $\gamma_{p,X}.$
\end{definition}

To the best of our knowledge, the uniform Fourier-analytic property
formulated above has not previously been studied in this form.
Nevertheless, the underlying probabilistic structure is closely related to
the classical theories of Hoeffding decompositions, independent
\(L^p\)-sums, decoupling, and, in particular, the first Hoeffding chaos. These topics have been extensively studied; we refer the reader to \cite{Hoeffding1948,BourgainRosenthalSchechtman1981,deLaPenaMontgomerySmith1995,Kwapien1987,Kwapien2010,CoxGeiss2021,RzeszutWojciechowski2021} for further background and related results.

The
following theorem shows that the $\operatorname{CPD}_p$ property characterizes UMD Banach spaces.
\begin{theorem}\label{thm:umd-implies-CPD}
Let \(X\) be a Banach space and let \(1<p<\infty\). Then \(X\) has the
\(\operatorname{CPD}_p\) property if and only if \(X\) is \(\operatorname{UMD}\). Moreover, we have $\beta^{1/2}_{p,X}\lesssim_p\gamma_{p,X}\lesssim_p \beta_{p,X}^2.$
\end{theorem}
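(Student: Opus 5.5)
The plan follows the four-step route from the introduction for the direction UMD\(\Rightarrow\)CPD, and uses an approximation argument for the converse.

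\emph{Step 1: Marcinkiewicz multipliers on the torus.} Prove a decoupling inequality on \(\mathbb T^N\), the future Theorem~\ref{thm:diagonal-bv-independent}. Let \(\phi_k\) be multipliers on \(\mathbb Z\) of uniformly bounded variation, and let \(g_k\in L^p_0(\mathbb T;X)\). Then
\[
\Bigl\|\sum_k T_{\phi_k}g_k(t_k)\Bigr\|_{L^p(\mathbb T^N;X)}\lesssim_p\beta_{p,X}^2\,\sup_k\|\phi_k\|_{BV}\Bigl\|\sum_k g_k(t_k)\Bigr\|_{L^p(\mathbb T^N;X)} .
\]
Here each \(g_k(t_k)\) is a function of the independent coordinate \(t_k\). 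I would use Bourgain's transference. Write a BV multiplier as an average of modulated Riesz projections, \(\phi=\int \mathbf 1_{[s,\infty)}\,d\phi(s)+\text{const}\). Then \(T_{\phi_k}g_k\) becomes an average of \(e_{s}P_+^{\mathbb T}(e_{-s}g_k)\). The modulation \(e_{-s}\) in the variable \(t_k\) is the delicate point. To remove it, I would embed \(\mathbb T^N\) into a single torus via \(t_k\mapsto t+R_k t\), with rapidly increasing frequencies \(R_k\) (lacunary spreading). After this embedding, the family of Riesz projections acting on the separate coordinates becomes one Riesz projection on a Littlewood--Paley piece. That piece can be handled by a single \(P_+^{\mathbb T}\) together with UMD-valued Littlewood--Paley/Stein estimates, using Remark~\ref{Riesz} and Lemma~\ref{Bochner1} to pass to \(L^p(\Omega;X)\).

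\emph{Steps 2--3: from the torus to \(\mathbb Z_m\).} I transfer Step~1 to cyclic groups by sampling. A function \(h\) on \(\mathbb Z_m\) lifts to a trigonometric polynomial \(\tilde h\) on \(\mathbb T\) with frequencies in \([-m/2,m/2)\) (centered convention). The sampling factorization (Lemma~\ref{lem:exact-cyclic-sampling-factorization}) expresses a cyclic Fourier projection onto an interval as three successive operations: restriction to a grid, a BV multiplier on \(\mathbb T\), and a convolution on \(\mathbb Z_m\) with an inverse-sinc-type kernel. The convolution pieces need a separate decoupling estimate, which is Proposition~\ref{prop:inverse-sinc-uniform-kernel}. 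I would prove it by writing the kernel as an \(\ell^1\)-average of translates. Translates \(y_k\mapsto y_k+c_k\) preserve the joint distribution of the independent variables, so they preserve the norm of the sum; Proposition~\ref{lem:R}(ii) then gives the averaged version. Combining these gives the centered-interval estimate, Proposition~\ref{prop:independent-centered-cyclic-intervals}. The terminal interval \([m-a,m)\) is a modulation of a centered interval, and the modulation \(e^{2\pi i c_k y_k/m}\) is applied to each \(h_k\) separately. One can undo it using the contraction principle after Rademacher randomization. That randomization is available because the \(h_k(y_k)\) are independent and mean zero, so by UMD their sum is comparable to \(\sum\varepsilon_k h_k(y_k)\). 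This costs further powers of \(\beta_{p,X}\), which I would try to keep within \(\beta_{p,X}^2\).

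\emph{Converse.} Assume CPD\(_p\). Take \(N=1\), a large \(m\), and \(a=\lfloor m/2\rfloor\). For a mean-zero trigonometric polynomial \(g\) of degree \(<m/4\), sample \(h(j)=g(j/m)\) and shift its frequencies by \(m/2\) so that \(Q_{m,a}\) acts as \(P_+^{\mathbb T}\) on the sampled frequencies. Sampling at \(m\) points is an isometry in \(L^p\) only asymptotically as \(m\to\infty\) (Marcinkiewicz--Zygmund), so one passes to the limit in \(m\). This gives \(\|P_+^{\mathbb T}\|\lesssim\gamma_{p,X}\), and Remark~\ref{Riesz} then yields \(\beta_{p,X}^{1/2}\lesssim_p\gamma_{p,X}\).

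\emph{Main obstacle.} I expect Step~1 to be the hardest part: obtaining decoupling over independent coordinates with constants uniform in \(N\) and in the \(m_k\). The proposal hinges on whether the lacunary embedding really reduces to a single Riesz projection without extra losses; if it does not, the \(\beta_{p,X}^2\) bound is at risk.
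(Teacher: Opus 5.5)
Your overall architecture (torus decoupling, inverse-sinc convolution, sampling factorization, centered and then terminal projections) is the paper's. Your converse is also fine: the shift by $m/2$ is a harmless variant of the paper's choice $a=L$, which produces $P_-^{\mathbb T}$ directly.

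The genuine gap is Step~1, which is exactly where $\gamma_{p,X}\lesssim_p\beta_{p,X}^2$ is decided. Two ideas are missing.
\begin{enumerate}
\item[(1)] Randomizing a sum of independent mean-zero terms costs nothing. One has $\|\sum_kf_k\|_{L^p}\simeq\|\sum_k\varepsilon_kf_k\|_{L^p}$ with constant $2$ in \emph{every} Banach space, by symmetrization (Lemma~\ref{lem:independent-umd-randomization}). You attribute this to UMD and budget powers of $\beta_{p,X}$ for it.
\item[(2)] After randomizing, the paper puts all coordinates on one torus \emph{exactly}, via the common translation $F_k(u,t_1,\dots,t_N):=f_k(t_k+u)$. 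An interval projection in $t_k$ becomes the same interval projection in $u$, and $(u,t)\mapsto(u,t_1+u,\dots,t_N+u)$ preserves Haar measure. Apply Lemma~\ref{lem:randomized-torus-intervals} in the variable $u$, with values in $L^p(\mathbb T^N;X)$ (UMD constant at most $\beta_{p,X}$ by Lemma~\ref{Bochner1}). Combined with writing each symbol as $V$ times an absolutely convex combination of interval indicators, this gives $\beta_{p,X}^2V$.
\end{enumerate}
Modulations are no obstacle, since $e_sP_+^{\mathbb T}e_{-s}=P^{\mathbb T}_{[s,\infty)}$ is itself an interval projection.

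Your substitute does not reach this bound. A one-variable substitution $t_k=R_kt$ does not preserve the joint law of $(t_1,\dots,t_N)$, and you would need an equidistribution limit that you do not set up. If you mean $t_k\mapsto t_k+R_kt$ with an extra variable $t$, that is the paper's trick (with $R_k=1$), and the lacunarity is superfluous. Moreover, after spreading, the coordinate projections do not become one Riesz projection. They become different interval projections $P^{\mathbb T}_{[R_ks_k,\infty)}$ on different annuli, that is, a Marcinkiewicz-type multiplier. Treating it with vector-valued Littlewood--Paley/Stein inequalities costs further $\beta_{p,X}$-dependent factors on both sides. So at best you get UMD$\Rightarrow$CPD$_p$ with a worse exponent.

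The terminal step also fails as written. Writing $Q_{m_k,a_k}$ as a modulated centered projection replaces $h_k$ by $e_{c_k}h_k$, which is no longer mean zero. Proposition~\ref{prop:independent-centered-cyclic-intervals} genuinely needs mean zero: take $h_1=x$ and $h_2=-x$ constant, with $J_1=\{0\}$ and $J_2=\{1\}$. You would have to split off the means and control them separately, or prove a randomized centered estimate for non-centered inputs. Your Steps~2--3 provide neither.

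The paper avoids modulation altogether:
\begin{enumerate}
\item[(a)] If $a_k\le\lfloor m_k/2\rfloor$, then $\{-a_k,\dots,-1\}\subset R_{m_k}$ is already a centered interval.
\item[(b)] Otherwise $Q_{m_k,a_k}=\operatorname{Id}-P_{m_k,\{1,\dots,m_k-a_k-1\}}$ on mean-zero functions. The identity parts sum to $\mathbb E\bigl(\sum_kh_k(y_k)\,\big|\,\sigma(y_k:a_k>\lfloor m_k/2\rfloor)\bigr)$, which is a contraction.
\end{enumerate}
What remains is a family of centered projections with signs $\pm1$, handled by the centered estimate.

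A smaller point concerns the convolution step. Proposition~\ref{lem:R}(ii) is not the right tool there, since translations are not an $\mathcal R$-bounded family. What works is to randomize the translation parameter (Jensen), remove the phases of $\kappa_m$ by the contraction principle after symmetrization, and use that coordinatewise translations preserve the joint law.
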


As a consequence of Proposition \ref{prop:UMD} and Theorem \ref{thm:umd-implies-CPD}, the definition of $\operatorname{CPD}_p$ property is also independent of the exponent $p.$
\begin{cor}
     The \(\operatorname{CPD}_p\) property does not depend on the choice of exponent $p.$ Equivalently, if \eqref{eq:CPD-property} holds for one
\(p\in(1,\infty)\), then it holds for every \(p\in(1,\infty)\).
\end{cor}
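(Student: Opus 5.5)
The statement is an immediate consequence of Proposition~\ref{prop:UMD} and Theorem~\ref{thm:umd-implies-CPD}. The proof needs no new analysis, only a chain of equivalences.

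Fix $p,q\in(1,\infty)$ and suppose \eqref{eq:CPD-property} holds with exponent $p$, i.e.\ $\gamma_{p,X}<\infty$.

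First, apply the ``if and only if'' of Theorem~\ref{thm:umd-implies-CPD} at exponent $p$: the $\operatorname{CPD}_p$ property implies that $X$ is $\operatorname{UMD}$. Quantitatively, $\beta_{p,X}^{1/2}\lesssim_p\gamma_{p,X}$, so $\beta_{p,X}<\infty$.

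Second, Proposition~\ref{prop:UMD} shows that the $\operatorname{UMD}_p$ property does not depend on the exponent. Hence $\beta_{q,X}<\infty$, i.e.\ \eqref{UMD} holds with exponent $q$.

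Third, apply Theorem~\ref{thm:umd-implies-CPD} again, now at exponent $q$. Since $X$ is $\operatorname{UMD}$, it has the $\operatorname{CPD}_q$ property with $\gamma_{q,X}\lesssim_q\beta_{q,X}^2<\infty$.

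This gives \eqref{eq:CPD-property} for every $q\in(1,\infty)$. The reverse reading of ``equivalently'' is trivial.

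There is no real obstacle: all the substance lies in Theorem~\ref{thm:umd-implies-CPD}, which we take as given. One should only note that the theorem is stated for each fixed $p$ separately. The cross-exponent information is supplied entirely by Proposition~\ref{prop:UMD}, so the constants at exponent $q$ are controlled only through $\beta_{q,X}$ and not directly by $\gamma_{p,X}$.
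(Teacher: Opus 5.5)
Your proposal is correct and matches the paper's argument: the paper derives this corollary directly from Proposition~\ref{prop:UMD} and Theorem~\ref{thm:umd-implies-CPD}. It goes from $\operatorname{CPD}_p$ to UMD, uses that UMD does not depend on the exponent, and returns to $\operatorname{CPD}_q$, exactly as you do. Your closing remark is also accurate: $\gamma_{q,X}$ is controlled only through $\beta_{q,X}$, and so only indirectly by $\gamma_{p,X}$.
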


\begin{remark}
    Let \(\Omega:=\prod_{k=1}^N\mathbb Z_{m_k}\), and define the first Hoeffding projection by
\[
\mathsf P_1f:=\sum_{k=1}^N\bigl(\mathbb E[f\,|\, y_k]-\mathbb Ef\bigr).
\]
On the first-chaos subspace, define
\[
\mathcal Q_{\boldsymbol m,\boldsymbol a}
\left(\sum_{k=1}^Nh_k(y_k)\right)
:=
\sum_{k=1}^NQ_{m_k,a_k}h_k(y_k),
\qquad \mathbb Eh_k=0.
\]
As observed by Rzeszut and Wojciechowski~\cite{RzeszutWojciechowski2021}, \(\mathsf P_1\) is uniform bounded on
\(L^p(\Omega;X)\) with respect to $N$ for any \(\operatorname{UMD}\) Banach space. Hence Theorem~\ref{thm:umd-implies-CPD} implies that
\(\mathcal Q_{\boldsymbol m,\boldsymbol a}\circ\mathsf P_1\)
is uniform bounded on \(L^p(\Omega;X)\) with respect to $N$ and $\mathbf m$.
\end{remark}

The proof of Theorem~\ref{thm:umd-implies-CPD} is carried out in
Sections~\ref{sec:decoupling-estimates} and~\ref{sec:proof-CPD}.
We first establish two decoupling estimates in
Section~\ref{sec:decoupling-estimates}:
a diagonal inequality for finitely supported Marcinkiewicz multipliers
on the torus and an inequality for cyclic convolution operators.
We then combine these estimates through an exact sampling
factorization to obtain the centered cyclic projection inequality in Section~\ref{sec:proof-CPD}.
From this, we derive the terminal projection bound
\eqref{eq:CPD-property}; the converse implication in the characterization
then follows by an approximation argument involving the Riesz projection. To the best of our knowledge, these Fourier-analytic decoupling
inequalities are new in the precise forms established here.

\subsection{Adapted martingale differences and decoupling inequality}

We shall use McConnell's decoupling theorem for tangent
martingale-difference sequences in UMD spaces; see
\cite{McConnell1989}. 
We first recall
the definition of a decoupled tangent sequence.

\begin{definition}[Decoupled tangent sequences]
Let \(X\) be a separable Banach space, and let
\[
(\Omega,\mathcal F,\mathbb P,(\mathcal F_k)_{k=0}^N)
\]
be a filtered probability space. Let \((d_k)_{k=1}^N\) and
\((e_k)_{k=1}^N\) be \(\mathcal F_k\)-adapted sequences of
\(X\)-valued random variables. The sequence \((e_k)_{k=1}^N\) is called a
\emph{decoupled tangent sequence} of \((d_k)_{k=1}^N\) if there exists a
sub-\(\sigma\)-algebra \(\mathcal G\subseteq\mathcal F\) such that
the following conditions hold:

\begin{enumerate}
\item[(i)] \emph{Tangency.} For every \(k=1,\ldots,N\) and every
\(B\in\mathcal B(X)\),
\[
\mathbb P(d_k\in B\mid\mathcal F_{k-1})
=
\mathbb P(e_k\in B\mid\mathcal F_{k-1})
=
\mathbb P(e_k\in B\mid\mathcal G)
\qquad\text{a.s.}
\]

\item[(ii)] \emph{Conditional independence.} For every
\(B_1,\ldots,B_N\in\mathcal B(X)\),
\[
\mathbb P
\bigl(
e_1\in B_1,\ldots,e_N\in B_N
\mid\mathcal G
\bigr)
=
\prod_{k=1}^N
\mathbb P(e_k\in B_k\mid\mathcal G)
\qquad\text{a.s.}
\]
\end{enumerate}
\end{definition}

In the form needed below, McConnell's decoupling inequality states that
if \((d_k)_k\) is a finite \(X\)-valued martingale-difference sequence
and \((e_k)_k\) is a decoupled tangent sequence of \((d_k)_k\), then
\begin{equation}\label{eq:umd-tangent-decoupling}
\beta_{p,X}^{-1}
\left\|\sum_ke_k\right\|_{L^p}\le \left\|\sum_kd_k\right\|_{L^p}
\le \beta_{p,X}
\left\|\sum_ke_k\right\|_{L^p}.
\end{equation}

For \(0\le a<m_k\), recall that we have defined
\[
\pi_{k,a}
:=
\sum_{\ell=m_k-a}^{m_k-1}\Delta_{k,\ell},
\qquad
\pi_{k,0}:=0.
\]
Let \(d_k:=E_{k+1}-E_k\) be the martingale-difference operator with respect to the canonical filtration of $\Gm$ defined in Section \ref{s2.1}. Let \(Q_{m_k,a}^{(x_k)}\) denote the terminal cyclic projection
\(Q_{m_k,a}\) acting in the \(x_k\)-coordinate. 
Then we have
\begin{align}\label{coincide}
    \pi_{k,a}d_k=Q_{m_k,a}^{(x_k)}d_k.
\end{align}
Indeed, by linearity,
it suffices to verify the identity on Vilenkin characters in the range
of \(d_k\). Every such character satisfies \(n_k\ne0\), \(n_j=0\) for
\(j>k\), and can be written as
\[
\psi_n(x)
=
\prod_{j=0}^k r_j(x)^{n_j}.
\]
Consequently,
\begin{align*}
\pi_{k,a}\psi_n(x)
&=
\prod_{j=0}^{k-1}r_j(x)^{n_j}
e^{2\pi in_kx_k/m_k}
\mathbf1_{\{m_k-a\le n_k<m_k\}}
\\
&=
\prod_{j=0}^{k-1}r_j(x)^{n_j}
Q_{m_k,a}
\left(
e^{2\pi in_k\,\cdot/m_k}
\right)(x_k)
=
Q_{m_k,a}^{(x_k)}\psi_n(x).
\end{align*}

This coordinate identity allows us to apply the independent
first-chaos estimate after decoupling the martingale differences.
The resulting bound is the main step in the reduction.

\begin{proposition}
\label{prop:adapted-terminal-core}
Let \(1<p<\infty\). Let \(X\) be a \(\operatorname{UMD}\) Banach space. For every generating sequence
\(\mathbf m\), every \(K\ge0\), every sequence
\((a_k)_{k=0}^K\) satisfying \(0\le a_k<m_k\), and every
\(f\in L^p(\Gm;X)\),
\begin{equation}\label{eq:adapted-terminal-core}
\left\|
\sum_{k=0}^K\pi_{k,a_k}d_kf
\right\|_{L^p(\Gm;X)}
\le \gamma_{p,X}\beta_{p,X}^2
\left\|
\sum_{k=0}^Kd_kf
\right\|_{L^p(\Gm;X)}.
\end{equation}
\end{proposition}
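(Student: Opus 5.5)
Set $d_k:=d_kf$ for $0\le k\le K$. This is a martingale-difference sequence for the canonical filtration $(\cF_{k+1})_k$. We construct its decoupled tangent sequence explicitly. Take an independent copy $\tilde x=(\tilde x_k)_k$ of the coordinates on a second copy of $\Gm$, and work on $\Gm\times\Gm$ with the filtration $\mathcal H_k:=\sigma(x_0,\dots,x_{k-1},\tilde x_0,\dots,\tilde x_{k-1})$ and $\mathcal G:=\sigma(x_0,x_1,\dots)$. Since $d_k$ is a function of $(x_0,\dots,x_k)$, write $d_k=D_k(x_0,\dots,x_{k-1};x_k)$ and define
\[
e_k:=D_k(x_0,\dots,x_{k-1};\tilde x_k).
\]
Conditionally on $\cF_k$ (or on $\mathcal H_k$), $x_k$ and $\tilde x_k$ are both uniform on $\mathbb Z_{m_k}$ and independent of the conditioning. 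Hence $d_k$ and $e_k$ have the same conditional law, and $\mathbb P(e_k\in\cdot\mid\mathcal G)$ equals the same kernel. Given $\mathcal G$, the $e_k$ depend on the independent variables $\tilde x_k$, so they are conditionally independent. Both $(d_k)$ and $(e_k)$ are $\mathcal H_{k+1}$-adapted, and $(d_k)$ is a martingale-difference sequence for $\mathcal H$ because $\tilde x$ is independent. Thus $(e_k)$ is a decoupled tangent sequence of $(d_k)$.

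Next apply the same construction to $\pi_{k,a_k}d_k$. By \eqref{coincide}, $\pi_{k,a_k}d_k=Q^{(x_k)}_{m_k,a_k}d_k$, and this equals $(Q_{m_k,a_k}D_k(x_0,\dots,x_{k-1};\cdot))(x_k)$. It is again a martingale-difference sequence, since $Q_{m_k,a_k}$ kills constants and so preserves mean zero in $x_k$. Its decoupled tangent sequence is
\[
e_k':=\bigl(Q_{m_k,a_k}D_k(x_0,\dots,x_{k-1};\cdot)\bigr)(\tilde x_k).
\]
Applying \eqref{eq:umd-tangent-decoupling} twice gives
\[
\Bigl\|\sum_k\pi_{k,a_k}d_k\Bigr\|_{L^p}\le\beta_{p,X}\Bigl\|\sum_ke'_k\Bigr\|_{L^p(\Gm\times\Gm;X)},
\qquad
\Bigl\|\sum_ke_k\Bigr\|_{L^p}\le\beta_{p,X}\Bigl\|\sum_kd_k\Bigr\|_{L^p}.
\]

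The middle step is the core estimate. Fix $x$, so we condition on $\mathcal G$. Set $h_k:=D_k(x_0,\dots,x_{k-1};\cdot)\in L^p_0(\mathbb Z_{m_k};X)$, which is mean zero because $d_k$ is a martingale difference. Then $\sum_ke'_k=\sum_kQ_{m_k,a_k}h_k(\tilde x_k)$ and $\sum_ke_k=\sum_kh_k(\tilde x_k)$, with $\tilde x_0,\dots,\tilde x_K$ independent and uniform on $\prod_k\mathbb Z_{m_k}$. Definition~\ref{def:CPD-property} (after reindexing $k\mapsto k+1$ and using $a_k<m_k$) gives the bound in $L^p(d\tilde x)$ with constant $\gamma_{p,X}$. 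Raising to the $p$-th power and integrating in $x$ yields $\|\sum e'_k\|_{L^p}\le\gamma_{p,X}\|\sum e_k\|_{L^p}$. Chaining the three inequalities gives \eqref{eq:adapted-terminal-core} with constant $\gamma_{p,X}\beta_{p,X}^2$.

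The main obstacle is the measure-theoretic bookkeeping. We must verify the tangency and conditional-independence conditions against the enlarged filtration $\mathcal H$ rather than $\cF$, and check that McConnell's inequality applies in that setting. Separability is harmless: we may reduce to $f$ a Vilenkin polynomial, so everything takes values in a finite-dimensional subspace, and then extend by density since all operators involved are finite sums of bounded projections. We also note that $d_kf$ depends only on finitely many coordinates, which makes $D_k$ well defined.
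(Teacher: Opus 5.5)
Your proposal is correct and follows the paper's proof essentially step by step. It uses the same enlargement by an independent copy of the coordinates, with filtration $\sigma(x_0,\dots,x_k,\tilde x_0,\dots,\tilde x_k)$ and $\mathcal G=\sigma(x_0,x_1,\dots)$, and the same decoupled tangent sequences for $(d_kf)_k$ and $(Q^{(x_k)}_{m_k,a_k}d_kf)_k$. It then applies McConnell's inequality \eqref{eq:umd-tangent-decoupling} twice, with the $\operatorname{CPD}_p$ estimate applied for fixed $x$ in between. The only cosmetic differences are that you use a full second copy of $\Gm$ instead of $\prod_{k=0}^K\mathbb Z_{m_k}$. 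You also handle separability via density of $X$-valued Vilenkin polynomials rather than by passing to the essentially separable range of $f$.
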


\begin{proof}
Since \(f\) is strongly measurable, it is essentially valued in a
separable closed subspace \(X_f\subseteq X\). All conditional
expectations and scalar Fourier projections appearing below preserve
\(X_f\), and every closed subspace of a UMD space is UMD. Replacing \(X\)
by \(X_f\), we may therefore assume that \(X\) is separable.

\noindent
\textit{Step 1: Enlargement of the probability space.}
We follow the construction for an enlargement of the probability space; see \cite[Theorem~4.4.11 and Example~4.4.13]{HNVWI}.
Let
\(
Y_K:=\prod_{k=0}^K\mathbb Z_{m_k},
\)
and denote its normalized Haar probability measure by \(\nu_K\). Consider the
product probability space
\[
(\widehat\Omega,\widehat{\mathcal F},\widehat{\mathbb P})
:=
\left(
\Gm\times Y_K,\,
\mathcal B(\Gm)\otimes\mathcal B(Y_K),\,
\mu\otimes\nu_K
\right).
\]
We write \((x,y)\) for the coordinates on \(\widehat\Omega\), where
\[
x=(x_j)_{j\ge0},
\qquad
y=(y_0,\ldots,y_K).
\]
Every random variable on \(\Gm\) is identified with its canonical
extension to \(\widehat\Omega\), constant in the \(y\)-variables.
Define a filtration on \(\widehat\Omega\) by
\[
\widehat{\mathcal F}_0
:=
\{\varnothing,\widehat\Omega\},
\qquad
\widehat{\mathcal F}_{k+1}
:=
\sigma(x_0,\ldots,x_k,y_0,\ldots,y_k),
\qquad
0\le k\le K.
\]
Let \(\widehat E_k\) denote conditional expectation with respect to
\(\widehat{\mathcal F}_k\).
We may rewrite
\[
d_kf(x)
=
h_k(x_0,\ldots,x_{k-1},x_k).
\]
For every fixed
\[
x_{<k}:=(x_0,\ldots,x_{k-1}),
\]
the mean-zero property of \(d_kf\) gives
\[
\widehat E_k(d_kf)=\frac1{m_k}
\sum_{s\in\mathbb Z_{m_k}}
h_k(x_{<k},s)
=
0.
\]
Consequently,
\((d_kf)_{k=0}^K=(h_k)_{k=0}^K\) is also a martingale-difference
sequence with respect to the new filtration
\((\widehat{\mathcal F}_k)_{k=0}^{K+1}\).

\noindent
\textit{Step 2: Tangency of the unprojected sequence.}

On the enlarged probability space, define
\[
e_k(x,y)
:=
h_k(x_{<k},y_k),
\qquad
0\le k\le K.
\]
To match the indexing in the preceding definition, set
\[
D_{k+1}:=d_kf,
\qquad
D_{k+1}':=e_k,
\qquad
0\le k\le K.
\]
Both \(D_{k+1}\) and \(D_{k+1}'\) are
\(\widehat{\mathcal F}_{k+1}\)-measurable, so both sequences are adapted
to \((\widehat{\mathcal F}_j)_{j=0}^{K+1}\).
Let
\[
\mathcal G
:=
\sigma(x_0,x_1,\ldots)
=
\mathcal B(\Gm)\otimes\{\varnothing,Y_K\}
\subseteq\widehat{\mathcal F}.
\]
Let \(\mathcal B(X)\) denote the Borel \(\sigma\)-algebra of \(X\).
For every \(B\in\mathcal B(X)\), the independence and uniform
distribution of \(x_k\) and \(y_k\) give
\begin{align*}
\widehat{\mathbb P}\bigl(
D_{k+1}\in B\mid\widehat{\mathcal F}_k
\bigr)
&=
\frac1{m_k}
\sum_{s\in\mathbb Z_{m_k}}
\mathbf1_B\bigl(h_k(x_{<k},s)\bigr)
\nonumber\\
&=
\widehat{\mathbb P}\bigl(
D_{k+1}'\in B\mid\widehat{\mathcal F}_k
\bigr)
=
\widehat{\mathbb P}\bigl(
D_{k+1}'\in B\mid\mathcal G
\bigr)
\qquad\text{a.s.}
\end{align*}
Moreover, for arbitrary
\(B_1,\ldots,B_{K+1}\in\mathcal B(X)\), the independence of
\(y_0,\ldots,y_K\) implies
\begin{align*}
\widehat{\mathbb P}\bigl(
D_1'\in B_1,\ldots,D_{K+1}'\in B_{K+1}
\mid\mathcal G
\bigr)
&=
\prod_{k=0}^K
\left[
\frac1{m_k}
\sum_{s\in\mathbb Z_{m_k}}
\mathbf1_{B_{k+1}}\bigl(h_k(x_{<k},s)\bigr)
\right]
\\
&\qquad=
\prod_{k=0}^K
\widehat{\mathbb P}\bigl(
D_{k+1}'\in B_{k+1}\mid\mathcal G
\bigr)
\qquad\text{a.s.}
\end{align*}
Thus \((D_{k+1}')_{k=0}^K\), equivalently \((e_k)_{k=0}^K\), is a
decoupled tangent sequence of \((D_{k+1})_{k=0}^K\), equivalently
\((d_kf)_{k=0}^K\).

\noindent
\textit{Step 3: Tangency of the projected sequence.}

Define
\[
\widetilde d_k
:=
Q_{m_k,a_k}^{(x_k)}d_kf,
\qquad
\widetilde e_k
:=
Q_{m_k,a_k}^{(y_k)}e_k,\qquad 0\le k\le K.
\]
For every fixed \(x_{<k}\), set
\[
H_{k,x_{<k}}
:=
Q_{m_k,a_k}
\bigl(h_k(x_{<k},\cdot)\bigr).
\]
Then we have
\[
\widetilde d_k(x)
=
H_{k,x_{<k}}(x_k),
\qquad
\widetilde e_k(x,y)
=
H_{k,x_{<k}}(y_k).
\]
To match the indexing in the preceding definition, set
\[
\widetilde D_{k+1}:=\widetilde d_k,
\qquad
\widetilde D_{k+1}':=\widetilde e_k,
\qquad
0\le k\le K.
\]
Both random variables are
\(\widehat{\mathcal F}_{k+1}\)-measurable.

As in Step~2, for every \(B\in\mathcal B(X)\),
\begin{align*}
\widehat{\mathbb P}\bigl(
\widetilde D_{k+1}\in B
\mid\widehat{\mathcal F}_k
\bigr)
&=
\frac1{m_k}
\sum_{s\in\mathbb Z_{m_k}}
\mathbf1_B\bigl(H_{k,x_{<k}}(s)\bigr)
\nonumber\\
&=
\widehat{\mathbb P}\bigl(
\widetilde D_{k+1}'\in B
\mid\widehat{\mathcal F}_k
\bigr)
=
\widehat{\mathbb P}\bigl(
\widetilde D_{k+1}'\in B
\mid\mathcal G
\bigr)
\qquad\text{a.s.}
\end{align*}
Furthermore, for arbitrary
\(B_1,\ldots,B_{K+1}\in\mathcal B(X)\),
\begin{align*}
\widehat{\mathbb P}\bigl(
\widetilde D_1'\in B_1,\ldots,
\widetilde D_{K+1}'\in B_{K+1}
\mid\mathcal G
\bigr)
&=
\prod_{k=0}^K
\left[
\frac1{m_k}
\sum_{s\in\mathbb Z_{m_k}}
\mathbf1_{B_{k+1}}\bigl(H_{k,x_{<k}}(s)\bigr)
\right]
\\
&=
\prod_{k=0}^K
\widehat{\mathbb P}\bigl(
\widetilde D_{k+1}'\in B_{k+1}
\mid\mathcal G
\bigr)
\qquad\text{a.s.}
\end{align*}
Therefore,
\((\widetilde D_{k+1}')_{k=0}^K\), equivalently
\((\widetilde e_k)_{k=0}^K\), is a decoupled tangent sequence of
\((\widetilde D_{k+1})_{k=0}^K\), equivalently
\((\widetilde d_k)_{k=0}^K\).
It remains to verify that
\((\widetilde D_{k+1})_{k=0}^K\) is a martingale-difference sequence.
For every fixed \(x_{<k}\), the terminal frequency set
\([m_k-a_k,m_k)\subseteq\mathbb Z_{m_k}\) does not contain the zero
frequency. Hence
\[
\widehat E_k\widetilde D_{k+1}
=
\frac1{m_k}\sum_{s\in\mathbb Z_{m_k}}H_{k,x_{<k}}(s)
=0.
\]
Thus \((\widetilde D_{k+1})_{k=0}^K\) is a martingale-difference sequence
with respect to \((\widehat{\mathcal F}_j)_{j=0}^{K+1}\).

\noindent
\textit{Step 4: Application of the decoupling inequalities.}

For brevity, write
\[
L_x^p:=L^p(\Gm;X),
\qquad
L_{x,y}^p:=L^p(\widehat\Omega;X).
\]
Applying \eqref{coincide}, the decoupling inequality \eqref{eq:umd-tangent-decoupling} and
Theorem~\ref{thm:umd-implies-CPD}, we obtain
\begin{eqnarray*}
\left\|
\sum_{k=0}^K \pi_{k,a_k}d_kf
\right\|_{L_x^p}
&\overset{\eqref{coincide}}{=}&
\left\|
\sum_{k=0}^K \widetilde d_k
\right\|_{L_x^p}
=
\left\|
\sum_{k=0}^K \widetilde d_k
\right\|_{L_{x,y}^p}
\\
&\overset{\eqref{eq:umd-tangent-decoupling}}{\le} & \beta_{p,X}
\left\|
\sum_{k=0}^K \widetilde e_k
\right\|_{L_{x,y}^p}
\overset{\eqref{eq:CPD-property}}{\le} \gamma_{p,X}\beta_{p,X}
\left\|
\sum_{k=0}^K e_k
\right\|_{L_{x,y}^p}
\\
&\overset{\eqref{eq:umd-tangent-decoupling}}{\le} & \gamma_{p,X}\beta^2_{p,X}
\left\|
\sum_{k=0}^K d_kf
\right\|_{L_{x,y}^p}
=
\gamma_{p,X}\beta^2_{p,X}\left\|
\sum_{k=0}^K d_kf
\right\|_{L_x^p}.
\end{eqnarray*}
This proves \eqref{eq:adapted-terminal-core}.
\end{proof}

\subsection{Proof of the main theorem}

We now complete the reduction of
Theorem~\ref{thm:uniform-umd-vilenkin-partial-sums} to
Theorem~\ref{thm:umd-implies-CPD}.
\begin{proof}[Proof of
Theorem~\ref{thm:uniform-umd-vilenkin-partial-sums}]
Let \(K\) be the largest number $k$ for which \(n_k\ne0\), and set
\(
g:=\overline{\psi_n}f.
\)
By Lemma~\ref{lem:paley-conjugation-identity} and
Proposition~\ref{prop:adapted-terminal-core},
\begin{align*}
\left\|
S_nf
\right\|_{L^p}
&=
\left\|
P_{B_n} g
\right\|_{L^p}
=
\left\|
\sum_{k=0}^K
\pi_{k,n_k}d_kg
\right\|_{L^p}
\\
&\le \gamma_{p,X}\beta_{p,X}^2
\left\|
\sum_{k=0}^Kd_kg
\right\|_{L^p}
=
\gamma_{p,X}\beta_{p,X}^2\|E_{K+1}g-E_0g\|_{L^p}\\
&
\le
2\gamma_{p,X}\beta_{p,X}^2\|g\|_{L^p}
=
2\gamma_{p,X}\beta_{p,X}^2\|f\|_{L^p}.
\end{align*}
Since $\gamma_{p,X}\lesssim_p\beta_{p,X}^2,$ we conclude that $\|S_nf\|_{L^p}\lesssim_p\beta_{p,X}^4\|f\|_{L^p}$.
\end{proof}

\section{Two decoupling estimates on the tori and cyclic groups}
\label{sec:decoupling-estimates}

We establish the decoupling inequalities for Marcinkiewicz multipliers on the tori and convolution operators on the cyclic groups, which will be used
to prove Theorem~\ref{thm:umd-implies-CPD}.

\subsection{Decoupling estimate for Marcinkiewicz multipliers on the tori}
\label{sec:torus-decoupling}

For a finitely supported scalar sequence
\(b:\mathbb Z\to\mathbb C\), define
\[
\|b\|_{\operatorname{var}}
:=
\sum_{n\in\mathbb Z}|b(n)-b(n-1)|
\]
and the relevant Marcinkiewicz multiplier
\[
T_b^{\mathbb T}f(t)
:=
\sum_{n\in\mathbb Z}b(n)\widehat f(n)e^{2\pi int}.
\]
The first decoupling inequality controls these multipliers uniformly
by the variation of their symbols. 

\begin{theorem}[A decoupling  estimate for Marcinkiewicz multipliers]
\label{thm:diagonal-bv-independent}
Let \(1<p<\infty\), and let \(X\) be a \(\operatorname{UMD}\) Banach space. For
\(1\le k\le N\), let \(f_k\in L_0^p(\mathbb T;X)\) and let
\(b_k:\mathbb Z\to\mathbb C\) be finitely supported.
If
\[
\sup_{1\le k\le N}\|b_k\|_{\mathrm{var}}\le V,
\]
then
\begin{equation}\label{eq:diagonal-bv-independent}
\left\|
\sum_{k=1}^N
T_{b_k}^{\mathbb T}f_k(t_k)
\right\|_{L^p(\mathbb T^N;X)}
\lesssim_p \beta_{p,X}^2
V
\left\|
\sum_{k=1}^Nf_k(t_k)
\right\|_{L^p(\mathbb T^N;X)}.
\end{equation}
\end{theorem}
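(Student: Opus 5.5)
The plan is to reduce \eqref{eq:diagonal-bv-independent} to the case where every symbol is the indicator of a half-line, and then to prove that case by a Bourgain-type transference of the independent coordinates \(t_1,\dots,t_N\) into a single auxiliary torus variable.

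\emph{Step 1: reduction to half-lines.} Since \(b_k\) is finitely supported, summation by parts gives
\[
b_k(n)=\sum_{j\in\mathbb Z}c_{k,j}\mathbf 1_{[j,\infty)}(n),\qquad c_{k,j}:=b_k(j)-b_k(j-1),\qquad \sum_j|c_{k,j}|\le V .
\]
Hence \(T^{\mathbb T}_{b_k}=\sum_j c_{k,j}P_{[j,\infty)}\), where \(P_{[j,\infty)}=e_jP_+^{\mathbb T}e_{-j}\) and \(e_j(t)=e^{2\pi i jt}\). Normalize by \(V\) and pad with the zero operator. Then \(T^{\mathbb T}_{b_k}/V\) is the expectation of \(\theta_kP_{[J_k,\infty)}\) over a random index \(J_k\), with \(\mathbb P(J_k=j)=|c_{k,j}|/V\) and \(\theta_k\) the corresponding unimodular phase (or \(0\)). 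We take \(J_1,\dots,J_N\) independent. By Minkowski's inequality for the expectation over \(J\), it then suffices to prove, uniformly in deterministic \(j_k\in\mathbb Z\) and \(|\theta_k|\le1\),
\[
\Bigl\|\sum_{k=1}^N\theta_kP_{[j_k,\infty)}f_k(t_k)\Bigr\|_{L^p(\mathbb T^N;X)}\lesssim_p\beta_{p,X}^2\Bigl\|\sum_{k=1}^Nf_k(t_k)\Bigr\|_{L^p(\mathbb T^N;X)} .
\]

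\emph{Step 2: removing the phases.} The sequence \((P_{[j_k,\infty)}f_k(t_k))_k\) is a martingale difference sequence for the filtration \(\sigma(t_1,\dots,t_k)\), except for the possibly nonzero mean of the terms with \(j_k\le 0\). This mean vanishes because \(\widehat{f_k}(0)=0\), so the projection onto \([j_k,\infty)\) does not create a zero mode. The UMD property together with the complex contraction principle therefore removes the \(\theta_k\) at a cost of \(\beta_{p,X}\). If that cost makes the exponent too large, I would instead absorb the phases into the final step, where they become a bounded diagonal multiplier on Littlewood--Paley blocks.

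\emph{Step 3: transference, which I expect to be the main obstacle.} By density, assume every \(f_k\) is a trigonometric polynomial with spectrum in \([-D,D]\setminus\{0\}\). Choose integers \(R_1<R_2<\dots<R_N\) that are super-lacunary with respect to \(D\) and the \(j_k\), for instance \(R_{k+1}\ge 10(D+|j_k|+1)R_k\). Set
\[
G(s,t):=\sum_k f_k(t_k+R_ks).
\]
Translation invariance in \(t\) gives \(\|G\|_{L^p(\mathbb T\times\mathbb T^N;X)}=\|\sum_kf_k(t_k)\|_{L^p}\), and the same identity holds after applying each \(P_{[j_k,\infty)}\). In the variable \(s\), the \(k\)-th term has spectrum in \(R_k\cdot[-D,D]\), and these sets lie in essentially disjoint lacunary bands \(I_k\). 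Moreover, \(P_{[j_k,\infty)}\) acting in \(t_k\) on the \(k\)-th term coincides with the \(s\)-multiplier \(\mathbf 1_{[R_kj_k,\infty)}\) restricted to the band \(I_k\).

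Consequently, the transferred operator is the \(s\)-multiplier \(\sum_k\mathbf 1_{I_k}\mathbf 1_{[R_kj_k,\infty)}\) acting on \(L^p(\mathbb T_s;L^p(\mathbb T^N;X))\), a space that is UMD with the same constant by Lemma~\ref{Bochner1}. To bound it, I would first apply a Littlewood--Paley decomposition along the lacunary bands, which is unconditional in UMD spaces at a cost of \(\beta_{p,X}\). Inside the resulting Rademacher average, I would then use the \(\mathcal R\)-boundedness of the modulated Riesz projections \(\{e_aP_+^{\mathbb T}e_{-a}\}\), which Bourgain's argument gives at a cost of \(\beta_{p,X}\) up to \(p\)-constants; this is also where the phases \(\theta_k\) can be absorbed. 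Finally, undoing the Littlewood--Paley step costs one more factor.

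The delicate points are the bookkeeping, which must show that the frequency bands genuinely separate, including the negative frequencies where the bands are mirrored, and the control of the total power of \(\beta_{p,X}\). If the straightforward chain gives \(\beta_{p,X}^3\), I would try to merge the Littlewood--Paley step and the \(\mathcal R\)-bound into a single vector-valued Marcinkiewicz multiplier estimate on \(\mathbb T_s\), whose constant is of order \(\beta_{p,X}^2\).
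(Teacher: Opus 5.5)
Your reduction to half-lines and the super-lacunary transference are correct as far as they go. Once $R_{k+1}>2DR_k$, the $s$-spectra of the transferred summands lie in disjoint bands, and the transferred operator is an $s$-multiplier of uniformly bounded variation on dyadic blocks. A vector-valued Marcinkiewicz theorem on $L^p(\mathbb T_s;L^p(\mathbb T^N;X))$ then bounds it.

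This, however, only proves the inequality with some constant $C(p,\beta_{p,X})$, not with $\beta_{p,X}^2V$ as stated. By your own count, Step~3 costs $\beta_{p,X}^3$ (two Littlewood--Paley passes around the $\mathcal R$-bound), and Step~2 adds a further factor. Even this is optimistic. The $\mathcal R$-bound of $\{e_aP_+^{\mathbb T}e_{-a}\}$ dominates $\|P_+^{\mathbb T}\|$, for which only $\lesssim_p\beta_{p,X}^2$ is available in general (Remark~\ref{Riesz}), so that step alone uses up the whole budget. Your fallback, a Marcinkiewicz multiplier theorem with constant $O(\beta_{p,X}^2)$, is not a standard result, and you give no argument for it. The exponent is part of the statement and is used later: it gives $\gamma_{p,X}\lesssim_p\beta_{p,X}^2$ and hence the $\beta_{p,X}^4$ in Theorem~\ref{thm:uniform-umd-vilenkin-partial-sums}. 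So this is a genuine gap.

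The missing idea is that randomization is free here. The summands $T^{\mathbb T}_{b_k}f_k(t_k)$ are independent and mean zero, so Lemma~\ref{lem:independent-umd-randomization} inserts Rademacher signs at cost $2$. After taking $\mathbb E_\nu$ outside the norm, the contraction principle removes the phases $\theta_{k,\nu_k}$ with an absolute constant; no UMD is needed, so your Step~2 is wasteful.

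With a Rademacher sum $\sum_k\varepsilon_kP^{\mathbb T}_{I_k}f_k(t_k)$ in hand, you do not need to separate frequencies at all. A common translation $F_k(u,t)=f_k(t_k+u)$, i.e. all $R_k=1$, followed by one application of Lemma~\ref{lem:randomized-torus-intervals} in $u$ with range $L^p(\mathbb T^N;X)$, costs exactly $\beta_{p,X}^2$. De-symmetrizing costs another $2$. This is the paper's proof. Your dilations and Littlewood--Paley passes are an expensive way of manufacturing the Rademacher sum that symmetrization provides at no cost.
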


We record two standard conclusions that will be
used repeatedly. 
\begin{lemma}\label{lem:independent-umd-randomization}
Let \(1<p<\infty\), let \(X\) be a Banach space,
and let \((\Omega_k,\mathbb P_k)_{k=1}^N\) be probability spaces. Suppose
that \(f_k\in L_0^p(\Omega_k;X)\). Regarding \(f_k\)'s as independent
random variables on \(\prod_{k=1}^N\Omega_k\), we have
\begin{equation}\label{eq:independent-umd-randomization}
\left\|
\sum_{k=1}^N f_k
\right\|_{L^p(\prod_k\Omega_k;X)}
\simeq
\left\|
\sum_{k=1}^N\varepsilon_k f_k
\right\|_{L^p(\Omega_\varepsilon\times\prod_k\Omega_k;X)}.
\end{equation}
Moreover, for every scalar sequence \((\lambda_k)_{k=1}^N\) satisfying
\(|\lambda_k|\le1\),
\begin{equation}\label{eq:independent-complex-contraction}
\left\|
\sum_{k=1}^N\lambda_k f_k
\right\|_{L^p(\prod_k\Omega_k;X)}
\lesssim
\left\|
\sum_{k=1}^N f_k
\right\|_{L^p(\prod_k\Omega_k;X)}.
\end{equation}
\end{lemma}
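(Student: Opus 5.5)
The plan is the standard symmetrization argument for independent mean-zero summands, followed by the Rademacher contraction principle. Everything takes place on \(\Omega:=\prod_k\Omega_k\), where the \(f_k\) are independent and have mean zero.

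\emph{Step 1: upper bound in \eqref{eq:independent-umd-randomization}.} Take an independent copy \(\Omega'=\prod_k\Omega_k'\) and set \(f_k'\) to be \(f_k\) evaluated on the copy. Since \(\mathbb E f_k'=0\), Jensen's inequality (conditional expectation onto the first copy) gives
\[
\Bigl\|\sum_k f_k\Bigr\|_{L^p(\Omega;X)}\le\Bigl\|\sum_k (f_k-f_k')\Bigr\|_{L^p(\Omega\times\Omega';X)}.
\]
The family \((f_k-f_k')_k\) consists of independent symmetric variables. So for each fixed sign vector \(\varepsilon\), the joint distribution of \((\varepsilon_k(f_k-f_k'))_k\) equals that of \((f_k-f_k')_k\); concretely, swap the \(k\)-th coordinates of \(\Omega_k\) and \(\Omega'_k\) when \(\varepsilon_k=-1\). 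Averaging over \(\varepsilon\) and applying the triangle inequality bounds the right-hand side by
\[
2\Bigl\|\sum_k\varepsilon_kf_k\Bigr\|_{L^p(\Omega_\varepsilon\times\Omega;X)}.
\]

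\emph{Step 2: lower bound in \eqref{eq:independent-umd-randomization}.} Run the argument in reverse. By the distributional invariance and the triangle inequality,
\[
\Bigl\|\sum_k\varepsilon_kf_k\Bigr\|\le\Bigl\|\sum_k\varepsilon_k(f_k-f_k')\Bigr\|=\Bigl\|\sum_k(f_k-f_k')\Bigr\|\le2\Bigl\|\sum_kf_k\Bigr\|.
\]
The first inequality is Jensen's inequality applied conditionally on \((\varepsilon,\Omega)\), using \(\mathbb E' f_k'=0\).

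\emph{Step 3: proof of \eqref{eq:independent-complex-contraction}.} Apply Step 1 to the independent mean-zero variables \(\lambda_kf_k\). This bounds the left side by a multiple of \(\|\sum_k\varepsilon_k\lambda_kf_k\|\). For each fixed \(\omega\in\Omega\), Kahane's contraction principle in its complex form (constant at most \(2\), obtained by splitting into real and imaginary parts; see \cite[Proposition~3.2.10]{HNVWI}) removes the coefficients \(\lambda_k\). Integrating over \(\omega\) and then applying Step 2 finishes the proof.

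No step here is a genuine obstacle. The only point needing care is the measure-preserving coordinate swap in Steps 1 and 2, which justifies the equality in distribution for each fixed \(\varepsilon\). All constants obtained are absolute, and none requires the UMD property.
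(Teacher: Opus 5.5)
Your proposal is correct and follows essentially the same route as the paper. Both arguments use symmetrization with an independent copy, Jensen's inequality, the distributional invariance of $\varepsilon_k(f_k-f_k')$ and the triangle inequality (constant $2$ in each direction), and then the complex contraction principle for the second estimate. Your explicit coordinate swap and the written-out reverse inequality simply make precise what the paper leaves as ``similarly''.
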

\begin{proof}
    Let \(f_1',\ldots,f_N'\) be an independent copy of
\(f_1,\ldots,f_N\). For simplicity, let $\|\cdot\|_{L_{f,f'}^p}$ denote the $L^p$ norm taken over $f_1,\dots,f_N$ and $f_1',\dots,f_N'.$ Then
\[
\begin{aligned}
\left\|\sum_{k=1}^N f_k\right\|_{L^p}
&=
\left\|
\mathbb E_{f'}
\sum_{k=1}^N (f_k-f_k')
\right\|_{L^p}
\\
&\le
\mathbb E_{f'}
\left\|
\sum_{k=1}^N (f_k-f_k')
\right\|_{L^p}
\\
&\le
\left\|
\sum_{k=1}^N (f_k-f_k')
\right\|_{L^p_{f,f'}}.
\end{aligned}
\]
Since \(f_k-f_k'\), \(1\le k\le N\), are independent and symmetric,
we have
\[
\varepsilon_k(f_k-f_k')
\overset{d}{=}
f_k-f_k'.
\]
Hence
\[
\begin{aligned}
\left\|\sum_{k=1}^N f_k\right\|_{L^p}
&\le
 \left(\mathbb E_{\varepsilon}\left\|
\sum_{k=1}^N
\varepsilon_k(f_k-f_k')
\right\|_{L^p_{f,f'}}^p\right)^{1/p}
\\
&\le
\left(\mathbb E_{\varepsilon}\left\|
\sum_{k=1}^N \varepsilon_k f_k
\right\|_{L^p}^p\right)^{1/p}
+
\left(\mathbb E_{\varepsilon}\left\|
\sum_{k=1}^N \varepsilon_k f_k'
\right\|_{L^p}^p\right)^{1/p}
\\
&=
2\left(\mathbb E_{\varepsilon}
\left\|
\sum_{k=1}^N \varepsilon_k f_k
\right\|_{L^p}^p\right)^{1/p}.
\end{aligned}
\]
Similarly, we have the converse inequality 
$$\left(\mathbb E_{\varepsilon}
\left\|
\sum_{k=1}^N \varepsilon_k f_k
\right\|_{L^p}^p\right)^{1/p}\le 2\left\|\sum_{k=1}^N f_k\right\|_{L^p}.$$
Now \eqref{eq:independent-complex-contraction} follows from \eqref{eq:independent-umd-randomization} and the complex contraction principle.
\end{proof}

For an interval \(I\subset\mathbb Z\), let \(P_I^{\mathbb T}\) denote the
corresponding Fourier projection. The next lemma is an easy consequence of Lemma \ref{Bochner1} and Bourgain's celebrated Riesz-projection characterization of UMD spaces---Remark \ref{Riesz}. The resulting statement is standard and should be well known to experts; see e.g. \cite[Lemma~1.10]{ArendtBu2002}. We omit the proof.

\begin{lemma}\label{lem:randomized-torus-intervals}
Let \(1<p<\infty\), and let \(X\) be a \(\operatorname{UMD}\) Banach
space. For arbitrary finite intervals
\(I_1,\ldots,I_N\subset\mathbb Z\) and arbitrary
\(f_1,\ldots,f_N\in L^p(\mathbb T;X)\),
\[
\left\|
\sum_{k=1}^N\varepsilon_kP_{I_k}^{\mathbb T}f_k
\right\|_{L^p(\Omega_\varepsilon\times\mathbb T;X)}
\lesssim_p \beta_{p,X}^2
\left\|
\sum_{k=1}^N\varepsilon_kf_k
\right\|_{L^p(\Omega_\varepsilon\times\mathbb T;X)}.
\]
\end{lemma}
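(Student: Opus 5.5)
The plan is to view the left-hand side as a Rademacher sum with values in the Bochner space \(Y:=L^p(\mathbb T;X)\). We then apply Bourgain's Riesz-projection bound from Remark~\ref{Riesz} in the enlarged space \(L^p(\Omega_\varepsilon;Y)\), which is UMD with constant at most \(\beta_{p,X}\) by Lemma~\ref{Bochner1} and Fubini.

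\emph{Step 1: reduce to half-lines.} Write \(I_k=[\alpha_k,\beta_k)\). Then
\[
P_{I_k}^{\mathbb T}=e_{\alpha_k}P_+^{\mathbb T}e_{-\alpha_k}-e_{\beta_k}P_+^{\mathbb T}e_{-\beta_k},
\]
where \(e_j\) denotes multiplication by \(e^{2\pi i j t}\). By the triangle inequality it suffices to bound
\[
\Bigl\|\sum_k\varepsilon_k e_{c_k}P_+^{\mathbb T}(e_{-c_k}f_k)\Bigr\|
\]
for an arbitrary integer sequence \((c_k)\), at the cost of a factor \(2\).

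\emph{Step 2: remove the modulations.} The modulations depend on \(k\), so they cannot be pulled out of a single \(L^p(\mathbb T)\)-norm directly. We handle this by a transference: introduce an auxiliary variable \(s\in\mathbb T\) and consider the function
\[
F(s,t):=\sum_k\varepsilon_k e^{-2\pi i c_k s}f_k(t+s)\,e^{2\pi i c_k(\cdot)}
\]
on \(\Omega_\varepsilon\times\mathbb T^2\). A simpler route uses the rotation invariance of Haar measure. For fixed \(t\), the function
\[
u\mapsto \sum_k\varepsilon_k e^{-2\pi i c_k u}f_k(u)
\]
is viewed as a \(Y\)-valued object. Apply \(P_+\) in the \(t\)-variable to
\[
G_\tau(t)=\sum_k\varepsilon_k e^{-2\pi ic_k t}f_k(t),
\]
noting that \(e_{-c_k}f_k\) for each \(k\) is just a different function \(g_k:=e_{-c_k}f_k\). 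The point is then only to compare \(\|\sum\varepsilon_k e_{c_k}P_+g_k\|\) with \(\|\sum\varepsilon_k P_+g_k\|\), and \(\|\sum\varepsilon_k g_k\|\) with \(\|\sum \varepsilon_k f_k\|\).

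Both comparisons follow from the complex contraction principle applied pointwise in \(t\): for fixed \(t\), the scalars \(e^{\pm2\pi i c_k t}\) are unimodular. Kahane's contraction principle on \(L^p(\Omega_\varepsilon;X)\) (constant \(\le 2\) in the complex case, and equality in fact up to that constant for unimodular multipliers since they are invertible) then gives, after integrating in \(t\),
\[
\Bigl\|\sum_k\varepsilon_k e_{c_k}h_k\Bigr\|\simeq\Bigl\|\sum_k\varepsilon_k h_k\Bigr\|
\]
for any \(h_k\).

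\emph{Step 3: apply the Riesz projection on the Bochner space.} It remains to show
\[
\Bigl\|\sum_k\varepsilon_k P_+^{\mathbb T}g_k\Bigr\|\lesssim_p\beta_{p,X}^2\Bigl\|\sum_k\varepsilon_k g_k\Bigr\|.
\]
This holds because \(\sum_k\varepsilon_kP_+^{\mathbb T}g_k=P_+^{\mathbb T}\bigl(\sum_k\varepsilon_kg_k\bigr)\), where \(P_+^{\mathbb T}\) is applied to the \(L^p(\Omega_\varepsilon;X)\)-valued function of \(t\). By Remark~\ref{Riesz} and Lemma~\ref{Bochner1}, its norm is \(\lesssim_p\beta_{p,L^p(\Omega_\varepsilon;X)}^2\le\beta_{p,X}^2\).

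The only delicate point is Step 2. Because the contraction principle is applied for each fixed \(t\) with unimodular coefficients, no loss beyond absolute constants occurs, so the expected obstacle dissolves. The final constant is \(\lesssim_p\beta_{p,X}^2\), as claimed.
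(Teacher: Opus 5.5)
Your argument is correct and matches the route the paper intends when it omits the proof as an easy consequence of Lemma~\ref{Bochner1} and Remark~\ref{Riesz}: write each $P_{I_k}^{\mathbb T}$ as a difference of modulated Riesz projections, remove the unimodular factors $e^{\pm 2\pi i c_k t}$ with the complex contraction principle pointwise in $t$, and apply $P_+^{\mathbb T}$ on $L^p(\mathbb T;L^p(\Omega_\varepsilon;X))$, using $\beta_{p,L^p(\Omega_\varepsilon;X)}\le\beta_{p,X}$. The transference detour in your Step~2 (the functions $F$ and $G_\tau$) is unnecessary and can be deleted, since the pointwise contraction argument already handles the modulations.
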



We are in a position to prove the decoupling Marcinkiewicz multiplier estimate on the tori.

\begin{proof}[Proof of Theorem~\ref{thm:diagonal-bv-independent}]
Assume that
\[
\operatorname{supp}b_k\subset [u_k,v_k]\cap\mathbb Z,
\]
and define
\[
c_{k,\ell}:=b_k(\ell)-b_k(\ell-1),
\qquad u_k\le\ell\le v_k.
\]
Since \(b_k(u_k-1)=0\), telescoping yields
\begin{equation}\label{eq:bv-interval-decomposition}
b_k
=
\sum_{\ell=u_k}^{v_k}
c_{k,\ell}\mathbf1_{[\ell,v_k]}.
\end{equation}
Furthermore,
\[
C_k
:=
\sum_{\ell=u_k}^{v_k}|c_{k,\ell}|
\le
\sum_{n\in\mathbb Z}|b_k(n)-b_k(n-1)|
=
\|b_k\|_{\mathrm{var}}
\le V.
\]
In particular, \(\|b_k\|_{\ell^\infty(\mathbb Z)}\le V\). If \(V=0\),
then every \(b_k\) is constant and finitely supported, hence \(b_k=0\).
The conclusion is therefore immediate. We may assume that \(V>0\).

For each \(k\), introduce an auxiliary random variable
\[
\nu_k\in\{u_k,u_{k}+1,\ldots,v_k\}\cup\{*\}
\]
with distribution
\[
\mathbb P(\nu_k=\ell)
=
\frac{|c_{k,\ell}|}{V},
\qquad u_k\le\ell\le v_k,
\]
and
\[
\mathbb P(\nu_k=*)
=
1-\frac{C_k}{V}.
\]
The probability at the dummy value \(*\) makes this a probability distribution.
Set
\[
I_{k,\ell}:=[\ell,v_k]\cap\mathbb Z
\]
and
\[
\theta_{k,\ell}
:=
\begin{cases}
\dfrac{c_{k,\ell}}{|c_{k,\ell}|},
& c_{k,\ell}\neq0,\\[6pt]
0,
& c_{k,\ell}=0.
\end{cases}
\]
For the dummy value, let \(I_{k,*}:=\{0\}\) and
\(\theta_{k,*}:=0\). Then \(|\theta_{k,\nu_k}|\le1\), and
\eqref{eq:bv-interval-decomposition} gives
\begin{align*}
V\,\mathbb E_{\nu_k}
\bigl[
\theta_{k,\nu_k}P_{I_{k,\nu_k}}^{\mathbb T}
\bigr]
&=
V\sum_{\ell=u_k}^{v_k}
\frac{|c_{k,\ell}|}{V}
\theta_{k,\ell}P_{I_{k,\ell}}^{\mathbb T}=
\sum_{\ell=u_k}^{v_k}
c_{k,\ell}P_{[\ell,v_k]}^{\mathbb T}
=
T_{b_k}^{\mathbb T}.
\end{align*}
Choose the variables \((\nu_k)_{k=1}^N\) independently of one another and
independently of the variables \(t_k\) and \(\varepsilon_k\). Write
\(\nu=(\nu_1,\ldots,\nu_N)\), and let \(\mathbb E_\nu\) denote expectation
with respect to \(\nu\). By linearity,
\begin{equation}\label{eq:bv-random-representation}
\sum_{k=1}^N
\varepsilon_kT_{b_k}^{\mathbb T}f_k(t_k)
=
V\,\mathbb E_{\nu}
\left[
\sum_{k=1}^N
\varepsilon_k\theta_{k,\nu_k}
P_{I_{k,\nu_k}}^{\mathbb T}f_k(t_k)
\right].
\end{equation}
Since \(f_k\in L_0^p(\mathbb T;X)\),
\[
\int_{\mathbb T}T_{b_k}^{\mathbb T}f_k\,dt
=
b_k(0)\widehat f_k(0)
=
0.
\]
Thus the functions
\(T_{b_k}^{\mathbb T}f_k(t_k)\), \(1\le k\le N\), are independent and
mean-zero. Lemma~\ref{lem:independent-umd-randomization}, followed by
\eqref{eq:bv-random-representation}, Jensen's inequality, and the complex
contraction principle, yields
\begin{align*}
\left\|
\sum_{k=1}^N
T_{b_k}^{\mathbb T}f_k(t_k)
\right\|_{L^p(\mathbb T^N;X)}
&\le 2
\left\|
\sum_{k=1}^N
\varepsilon_kT_{b_k}^{\mathbb T}f_k(t_k)
\right\|_{L^p(\Omega_\varepsilon\times\mathbb T^N;X)}
\\
&\le
2V\,\mathbb E_\nu
\left\|
\sum_{k=1}^N
\varepsilon_k\theta_{k,\nu_k}
P_{I_{k,\nu_k}}^{\mathbb T}f_k(t_k)
\right\|_{L^p(\Omega_\varepsilon\times\mathbb T^N;X)}
\\
&\le 8
V\,\mathbb E_\nu
\left\|
\sum_{k=1}^N
\varepsilon_k
P_{I_{k,\nu_k}}^{\mathbb T}f_k(t_k)
\right\|_{L^p(\Omega_\varepsilon\times\mathbb T^N;X)}.
\end{align*}
It consequently suffices to prove, uniformly over every fixed choice of
\((\nu_k)_{k=1}^N\), that
\begin{equation}\label{eq:fixed-random-interval-choice}
\left\|
\sum_{k=1}^N
\varepsilon_k
P_{I_{k,\nu_k}}^{\mathbb T}f_k(t_k)
\right\|_{L^p(\Omega_\varepsilon\times\mathbb T^N;X)}
\lesssim_p \beta_{p,X}^2
\left\|
\sum_{k=1}^N
\varepsilon_kf_k(t_k)
\right\|_{L^p(\Omega_\varepsilon\times\mathbb T^N;X)}.
\end{equation}

To prove \eqref{eq:fixed-random-interval-choice}, introduce an additional
variable \(u\in\mathbb T\) and define
\[
F_k(u,t_1,\ldots,t_N):=f_k(t_k+u).
\]
Indeed, if
\[
f_k(t_k)=\sum_{n\in\mathbb Z}\widehat f_k(n)e^{2\pi int_k},
\]
then
\[
F_k(u,t_1,\ldots,t_N)
=
\sum_{n\in\mathbb Z}
\widehat f_k(n)e^{2\pi int_k}e^{2\pi inu}.
\]
Let $P_{I_{k,\nu_k}}^{\mathbb T,u}$ denote the Fourier projection associated with $I_{k,\nu_k}$ acting in the $u$ variable.
By a direct calculation, we then have
\[
P_{I_{k,\nu_k}}^{\mathbb T,u}F_k(u,t_1,\ldots,t_N)
=
\bigl(P_{I_{k,\nu_k}}^{\mathbb T}f_k\bigr)(t_k+u).
\]

For every fixed \(u\), the transformation
\[
(t_1,\ldots,t_N)
\longmapsto
(t_1+u,\ldots,t_N+u)
\]
preserves Haar measure on \(\mathbb T^N\). Therefore,
\begin{align*}
&
\left\|
\sum_{k=1}^N
\varepsilon_k
P_{I_{k,\nu_k}}^{\mathbb T}f_k(t_k)
\right\|_{L^p(\Omega_\varepsilon\times\mathbb T^N;X)}
=
\left\|
\sum_{k=1}^N
\varepsilon_k
P_{I_{k,\nu_k}}^{\mathbb T,u}F_k(u,t_1,\ldots,t_N)
\right\|_{L^p(\Omega_\varepsilon\times\mathbb T^{N+1};X)}.
\end{align*}
Applying Lemma~\ref{lem:randomized-torus-intervals} in the \(u\)-variable,
with \(L^p(\mathbb T^N;X)\) as the range space, gives
\begin{align*}
&
\left\|
\sum_{k=1}^N
\varepsilon_k
P_{I_{k,\nu_k}}^{\mathbb T,u}F_k(u,t_1,\ldots,t_N)
\right\|_{L^p(\Omega_\varepsilon\times\mathbb T^{N+1};X)}
\\
&\quad\lesssim_p \beta_{p,X}^2
\left\|
\sum_{k=1}^N
\varepsilon_kF_k(u,t_1,\ldots,t_N)
\right\|_{L^p(\Omega_\varepsilon\times\mathbb T^{N+1};X)}
=\beta_{p,X}^2
\left\|
\sum_{k=1}^N
\varepsilon_kf_k(t_k)
\right\|_{L^p(\Omega_\varepsilon\times\mathbb T^N;X)}.
\end{align*}
The last equality follows from the measure-preserving transformation
\[
(u,t_1,\ldots,t_N)
\longmapsto
(u,t_1+u,\ldots,t_N+u)
\]
on \(\mathbb T^{N+1}\). This proves
\eqref{eq:fixed-random-interval-choice}.
Finally, Lemma~\ref{lem:independent-umd-randomization} gives
\[
\left\|
\sum_{k=1}^N
\varepsilon_kf_k(t_k)
\right\|_{L^p(\Omega_\varepsilon\times\mathbb T^N;X)}
\le 2
\left\|
\sum_{k=1}^N
f_k(t_k)
\right\|_{L^p(\mathbb T^N;X)}.
\]
Combining the preceding estimates proves
\eqref{eq:diagonal-bv-independent}.
\end{proof}

\subsection{Decoupling estimate for convolution operators on the cyclic groups}
\label{sec:cyclic-convolution}

We next establish the decoupling inequality for
the convolution operators on the cyclic group.
We first introduce the centered frequency representatives and the
corresponding correction multipliers.
For \(m\ge2\), let
\begin{equation*}
R_m
:=
\left\{
-\left\lfloor\frac m2\right\rfloor,
\ldots,
\left\lceil\frac m2\right\rceil-1
\right\}
\end{equation*}
be the centered complete residue system. Let
\(\rho_m:\mathbb Z_m\to R_m\) denote the centered representative map
defined by
\[
\rho_m(q)
=
\begin{cases}
q,
& 0\le q\le \left\lceil \dfrac{m}{2}\right\rceil-1,\\[6pt]
q-m,
& \left\lceil \dfrac{m}{2}\right\rceil\le q\le m-1.
\end{cases}
\]
Define
\[
\operatorname{sinc}(u)
:=
\begin{cases}
\dfrac{\sin(\pi u)}{\pi u},
&u\ne0,\\[2mm]
1,
&u=0
\end{cases}
\]
and
\[
\Phi(u)
:=
\frac1{\operatorname{sinc}(u)}
=
\frac{\pi u}{\sin(\pi u)},
\qquad |u|\le\frac12,
\]
with \(\Phi(0)=1\). We may extend \(\Phi\) periodically to \(\mathbb T\). Let
\(D_m\) be the cyclic Fourier multiplier with symbol
\[
d_m(q)
:=
\Phi\!\left(\frac{\rho_m(q)}m\right),
\qquad q\in\mathbb Z_m.
\]

The next proposition gives a uniform $\ell^1$-bound for the
convolution kernels of the multipliers $D_m$ and the resulting estimate
on independent sums.

\begin{proposition}[A decoupling estimate for cyclic convolution operators]
\label{prop:inverse-sinc-uniform-kernel}
For any $m\ge 2,$ there exists a kernel \(\kappa_m:\mathbb Z_m\to\mathbb C\) such that 
\[
D_mh(j)
=
\sum_{s\in\mathbb Z_m}\kappa_m(s)h(j-s),
\]
and
\begin{equation}\label{eq:inverse-sinc-uniform-l1}
\sup_{m\ge2}
\sum_{s\in\mathbb Z_m}|\kappa_m(s)|
<\infty.
\end{equation}
Consequently, if \(1<p<\infty\) and \(X\) is a Banach space, then,
for every \(N\ge1\), every family of integers
\((m_k)_{k=1}^N\) with \(m_k\ge2\), and every
\(h_k\in L_0^p(\mathbb Z_{m_k};X)\), one has
\begin{equation}\label{eq:decoupling-cyclic-convolution}
\left\|
\sum_{k=1}^ND_{m_k}h_k(y_k)
\right\|_{L^p\left(\prod_{k=1}^N\mathbb Z_{m_k};X\right)}
\lesssim
\left\|
\sum_{k=1}^Nh_k(y_k)
\right\|_{L^p\left(\prod_{k=1}^N\mathbb Z_{m_k};X\right)}.
\end{equation}
\end{proposition}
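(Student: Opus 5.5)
The plan has two parts: a uniform kernel bound, then a convexity argument on the product space.

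\emph{Step 1: the kernel and its $\ell^1$ bound.} Define
\[
\kappa_m(s):=\frac1m\sum_{q\in\mathbb Z_m}d_m(q)e^{2\pi iqs/m}.
\]
By Fourier inversion on $\mathbb Z_m$, $D_mh=\kappa_m*h$. To prove \eqref{eq:inverse-sinc-uniform-l1}, view $\Phi$ as a function on $[-\tfrac12,\tfrac12]$ and compare $\kappa_m$ with the Fourier coefficients of a smooth periodic function.

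The difficulty is that $\Phi$ is smooth on $[-\tfrac12,\tfrac12]$ but its periodic extension has a kink at $\pm\tfrac12$. Indeed $\Phi(\pm\tfrac12)=\pi/2$, but $\Phi'(\pm\tfrac12)\neq0$. So take $\Phi=\Psi+\Theta$, where $\Psi$ is a $C^2$ periodic function and $\Theta$ absorbs the endpoint behaviour. A convenient choice is $\Theta(u)=c\,|u|$-type, with $c$ chosen to match the one-sided derivatives at $\pm\tfrac12$. Since $\Phi$ is even, subtracting $c\,(|u|-\tfrac14)$-type terms makes the periodic remainder $C^1$ with a piecewise-$C^2$ derivative.

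\emph{Step 2: bounding the two pieces.} For the smooth part, $\frac1m\sum_{q\in R_m}\Psi(q/m)e^{2\pi iqs/m}$ is, by Poisson summation, $\sum_{r\in\mathbb Z}\widehat\Psi(s+rm)$. Its $\ell^1(\mathbb Z_m)$ norm is at most $\sum_n|\widehat\Psi(n)|$, which is finite because $\widehat\Psi(n)=O(n^{-2})$.

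For the piece $|u|$, I would compute directly. The sequence $q\mapsto|\rho_m(q)|/m$ is a discrete triangle wave, and its cyclic Fourier coefficients are explicitly of size $O(1/(m\sin^2(\pi s/m)))$ for $s$ odd and $0$ for $s\neq0$ even, up to parity adjustments for odd $m$. These sum to $O(1)$ uniformly in $m$. Alternatively, $|\rho_m(q)|$ is a convex-then-linear sequence, so a direct summation-by-parts estimate also works.

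Equivalently, one can argue entirely by discrete summation by parts twice. Writing $\kappa_m(s)$ and summing by parts twice gives
\[
|\kappa_m(s)|\lesssim \frac{m^{-1}\sum_q|\Delta^2 d_m(q)|+\text{boundary terms}}{|1-e^{2\pi is/m}|^2}.
\]
Here $\Delta^2 d_m=O(m^{-2})$ in the interior and $O(m^{-1})$ at the two wrap points. This yields $|\kappa_m(s)|\lesssim m^{-1}\min(1,\,m^2/s^2)$ for $|s|\le m/2$, which is summable uniformly in $m$. For $s=0$ we use $|\kappa_m(0)|\le\sup|\Phi|=\pi/2$.

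I expect this uniform kernel estimate, specifically controlling the wrap-around discontinuity of the derivative, to be the main (though elementary) obstacle. The bound $\Delta d_m(q)=O(1/m)$ gives only the first-order estimate $O(1/|s|)$, which is not summable. The second-order bound is needed, so the size of the jump of $\Delta d_m$ at the wrap point must be checked carefully.

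\emph{Step 3: the decoupling estimate.} Given the $\ell^1$ bound, \eqref{eq:decoupling-cyclic-convolution} follows by convexity. Write $\sum_kD_{m_k}h_k(y_k)$ as a sum over translations. Normalize $\lambda_k(s)=\kappa_{m_k}(s)/\|\kappa_{m_k}\|_{\ell^1}$, and let $s_k$ be independent random elements of $\mathbb Z_{m_k}$ with $\mathbb P(s_k=s)=|\kappa_{m_k}(s)|/\|\kappa_{m_k}\|_1$. Then
\[
\sum_kD_{m_k}h_k(y_k)=C\,\mathbb E_s\sum_k\theta_k(s_k)\,\frac{\|\kappa_{m_k}\|_1}{C}\,h_k(y_k-s_k),
\]
where $|\theta_k|=1$ and $C=\sup_m\|\kappa_m\|_1$.

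By Jensen, for each fixed $s$ it suffices to bound
\[
\Bigl\|\sum_k\lambda'_k h_k(y_k-s_k)\Bigr\|_{L^p}\qquad\text{with }|\lambda'_k|\le1.
\]
Translation invariance of Haar measure on $\prod_k\mathbb Z_{m_k}$ removes the shifts $s_k$. The mean-zero independent contraction \eqref{eq:independent-complex-contraction} of Lemma~\ref{lem:independent-umd-randomization} then removes the $\lambda'_k$, giving the bound by $\|\sum_kh_k(y_k)\|_{L^p}$. This step uses no UMD assumption, consistent with the statement.
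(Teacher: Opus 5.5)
Your Step 3 is essentially the paper's argument: random translations distributed according to $|\kappa_{m_k}|$, Jensen, translation invariance of Haar measure, and the contraction principle \eqref{eq:independent-complex-contraction} for independent mean-zero sums. Normalizing each $\kappa_{m_k}$ separately and absorbing $\|\kappa_{m_k}\|_{\ell^1}/C\le1$ into the coefficients is equivalent to the paper's common weight $W$ with a dummy atom.

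Where you differ is the kernel bound. The paper does not treat the kink of the periodized $\Phi$ at $\pm\frac12$ as an obstacle. A continuous, piecewise $C^2$ periodic function whose derivative has bounded variation already satisfies $|\widehat\Phi(n)|\lesssim(1+|n|)^{-2}$. Aliasing then gives $\kappa_m(s)=\sum_{\ell\in\mathbb Z}\widehat\Phi(\ell m-s)$, and hence $\sum_s|\kappa_m(s)|\le\sum_n|\widehat\Phi(n)|$ in one line.

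Your splitting $\Phi=\Psi+\Theta$ is therefore unnecessary, and as literally described it does not do what you say. Subtracting $c|u|$ fixes the slopes at $\pm\frac12$ but creates a new kink at $u=0$, so $\Psi$ is not even $C^1$, let alone $C^2$; a periodized $\pi u^2$ would be the right correction. This is harmless only because $O(n^{-2})$ decay, which you get anyway, is all that is used.

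Your alternative, second-order summation by parts directly on $\mathbb Z_m$, is a genuinely different, fully discrete route, and it works. There are no boundary terms on the cyclic group. The first differences of $d_m$ are $O(m^{-1})$ everywhere, including across the wrap, so $\sum_q|\Delta^2 d_m(q)|=O(m^{-1})$ and
\[
|\kappa_m(s)|\le\frac{m^{-1}\sum_q|\Delta^2d_m(q)|}{4\sin^2(\pi s/m)}\lesssim\frac{m^{-2}}{\sin^2(\pi s/m)}\lesssim\frac1{s^2},\qquad 1\le|s|\le\frac m2 .
\]
This buys an explicit pointwise kernel bound without Fourier series on $\mathbb T$, at the cost of more bookkeeping than the paper's aliasing identity.

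Two of your stated bounds are off by a power of $m$ and must be corrected. First, the normalized cyclic coefficients of $q\mapsto|\rho_m(q)|/m$ are $O\bigl(1/(m^2\sin^2(\pi s/m))\bigr)$, not $O\bigl(1/(m\sin^2(\pi s/m))\bigr)$; with your version the sum over $s$ would be of order $m$, not $O(1)$. Second, the output of the summation by parts is $|\kappa_m(s)|\lesssim s^{-2}$, not $m^{-1}\min(1,m^2/s^2)$. The latter equals $m^{-1}$ on $1\le|s|\le m/2$ and is false there: $\kappa_m(1)\to\widehat\Phi(-1)$, which is nonzero because $\Phi$ is even and strictly increasing in $|u|$. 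With these fixes your proof is complete.
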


\begin{proof}
The periodic function \(\Phi\) is continuous and piecewise \(C^2\), with
\(\Phi'\) of bounded variation. Hence
\[
|\widehat\Phi(n)|
\lesssim
(1+|n|)^{-2},
\]
and therefore
\[
\sum_{n\in\mathbb Z}|\widehat\Phi(n)|<\infty.
\]
The periodicity of $\Phi$ and the identity
\[
\frac1m\sum_{s=0}^{m-1}e^{2\pi iqs/m}
=
\begin{cases}
1,&q=0,\\[2mm]
0,&q\ne0,
\end{cases}
\qquad q\in\mathbb Z_m,
\]
show that the inverse discrete Fourier kernel of \(d_m\) is
\[
\kappa_m(s)
=\frac1m\sum_{q=0}^{m-1}
\Phi\!\left(\frac{\rho_m(q)}m\right)e^{2\pi iqs/m}=
\frac1m\sum_{q=0}^{m-1}
\Phi\!\left(\frac qm\right)e^{2\pi iqs/m}
=
\sum_{\ell\in\mathbb Z}\widehat\Phi(\ell m-s).
\]
Consequently,
\[
\sum_{s=0}^{m-1}|\kappa_m(s)|
\le
\sum_{n\in\mathbb Z}|\widehat\Phi(n)|,
\]
which proves \eqref{eq:inverse-sinc-uniform-l1}.

Let
\(
W:=2\sum_{n\in\mathbb Z}|\widehat\Phi(n)|.
\)
As in the proof of Theorem~\ref{thm:diagonal-bv-independent}, introduce
independent random variables
\[
\mathbb S_k\in\mathbb Z_{m_k}\cup\{s_*\},
\qquad 1\le k\le N,
\]
with distribution
\[
\mathbb P(\mathbb S_k=s)
=
\frac{|\kappa_{m_k}(s)|}{W},
\qquad s\in\mathbb Z_{m_k},
\]
and
\[
\mathbb P(\mathbb S_k=s_*)
=
1-\frac1W
\sum_{s\in\mathbb Z_{m_k}}|\kappa_{m_k}(s)|.
\]
Set
\[
\theta_k(s)
:=
\begin{cases}
\dfrac{\kappa_{m_k}(s)}{|\kappa_{m_k}(s)|},
&s\in\mathbb Z_{m_k}\text{ and }\kappa_{m_k}(s)\neq0,\\[6pt]
0,
&\text{otherwise}.
\end{cases}
\]
and define the translations
\[
(\tau_sz)(j):=z(j-s),
\qquad
\tau_{s_*}:=\operatorname{Id}.
\]
We then have
\[
D_{m_k}h_k
=
W\,\mathbb E_{\mathbb S_k}
\bigl[
\theta_k(\mathbb S_k)\tau_{\mathbb S_k}h_k
\bigr],
\qquad
|\theta_k(\mathbb S_k)|\le1.
\]
Set \(\mathbb S:=(\mathbb S_1,\ldots,\mathbb S_N)\). Then Jensen's inequality and
Lemma~\ref{lem:independent-umd-randomization} give
\begin{align*}
\left\|
\sum_kD_{m_k}h_k(y_k)
\right\|_{L^p\left(\prod_{k=1}^N\mathbb Z_{m_k};X\right)}
&\le
W\,\mathbb E_{\mathbb S}
\left\|
\sum_k\theta_k(\mathbb S_k)\tau_{\mathbb S_k}h_k(y_k)
\right\|_{L^p\left(\prod_{k=1}^N\mathbb Z_{m_k};X\right)}
\\
&\lesssim W\,\mathbb E_{\mathbb S}
\left\|
\sum_k\tau_{\mathbb S_k}h_k(y_k)
\right\|_{L^p\left(\prod_{k=1}^N\mathbb Z_{m_k};X\right)}.
\end{align*}
Since independent coordinate translations preserve the joint distribution,
\[
\left\|
\sum_kD_{m_k}h_k(y_k)
\right\|_{L^p\left(\prod_{k=1}^N\mathbb Z_{m_k};X\right)}
\lesssim W
\left\|
\sum_kh_k(y_k)
\right\|_{L^p\left(\prod_{k=1}^N\mathbb Z_{m_k};X\right)}.
\]
This proves \eqref{eq:decoupling-cyclic-convolution}.
\end{proof}

\section{Sampling factorization and the CPD characterization}
\label{sec:proof-CPD}\label{s5.3}

We now use the two decoupling estimates from
Section~\ref{sec:decoupling-estimates} to prove the centered cyclic
projection inequality and thereby complete
the proof of Theorem~\ref{thm:umd-implies-CPD}. The link between these
decoupling inequalities is provided by the exact sampling factorization established below:
it expresses a centered cyclic projection in terms of a torus
multiplier, conditional expectation, and a cyclic convolution
correction. This factorization, together with
\eqref{eq:diagonal-bv-independent}, \eqref{eq:decoupling-cyclic-convolution},
and the contractivity of conditional expectation, yields the desired estimate.

We first construct the sampling factorization.
Let
\[
C_{m,j}
:=
\left[
\frac{j-\frac12}{m},
\frac{j+\frac12}{m}
\right)
\pmod 1,
\qquad j\in\mathbb Z_m,
\]
and define the centered step-function embedding
\[
(\mathcal J_mh)(t):=h(j),
\qquad t\in C_{m,j}.
\]
Then \(\mu(C_{m,j})=1/m\), and
\(\mathcal J_m:L^p(\mathbb Z_m;X)\to L^p(\mathbb T;X)\) is an
isometry.
A direct calculation gives
\begin{equation}\label{eq:step-embedding-fourier-coefficients}
\widehat{\mathcal J_mh}(n)
=
\operatorname{sinc}\!\left(\frac nm\right)
\widehat h(n\bmod m),
\qquad n\in\mathbb Z.
\end{equation}
Let \(\mathcal E_m\) denote the conditional expectation onto the
\(\sigma\)-algebra generated by the cells
\((C_{m,j})_{j=0}^{m-1}\). The range of \(\mathcal E_m\) coincides with
the range of \(\mathcal J_m\). Let \(\mathcal U_m\) denote the inverse
isometry of \(\mathcal J_m\) from the range of \(\mathcal J_m\) onto
\(L^p(\mathbb Z_m;X)\).

For an interval \(J\subset R_m\), define the 
symbol
\begin{equation*}
b_{m,J}(n)
:=
\begin{cases}
\operatorname{sinc}^{-1}(n/m),&n\in J,\\
0,&n\notin J,
\end{cases}
\qquad n\in\mathbb Z.
\end{equation*}
Indeed, if \(J=[a,b]\cap\mathbb Z\subset R_m\), then
\begin{align*}
\|b_{m,J}\|_{\operatorname{var}}
&\le
2\|\Phi\|_{L^\infty([-1/2,1/2])}
+
\sum_{n=a+1}^{b}
\left|
\Phi\left(\frac nm\right)
-
\Phi\left(\frac{n-1}{m}\right)
\right|
\\
&\le
2\|\Phi\|_{L^\infty([-1/2,1/2])}
+
\int_{-1/2}^{1/2}|\Phi'(u)|\,du.
\end{align*}
Consequently,
\begin{equation}\label{eq:uniform-bv-cyclic-symbol}
\sup_{m\ge2}
\sup_{\substack{J\subset R_m\\J\text{ an interval}}}
\|b_{m,J}\|_{\mathrm{var}}
<\infty.
\end{equation}
We also define the centered cyclic Fourier
projection by
\begin{equation}\label{def of P}
P_{m,J}h(j)
:=
\sum_{q\in J}
\widehat h(q\bmod m)e^{2\pi iqj/m}
=
\sum_{q\in J}
\widehat h(\rho_m^{-1}(q))e^{2\pi iqj/m},
\qquad j\in\mathbb Z_m.
\end{equation}
These definitions lead to an exact factorization of the cyclic
projection through a torus multiplier.

\begin{lemma}[Cyclic sampling factorization]\label{lem:exact-cyclic-sampling-factorization}
For every \(m\ge2\) and every interval \(J\subset R_m\),
\begin{equation}\label{eq:exact-cyclic-sampling-factorization}
\mathcal J_mP_{m,J}
=
\mathcal J_mD_m\mathcal U_m\mathcal E_m
T_{b_{m,J}}^{\mathbb T}\mathcal J_m.
\end{equation}
\end{lemma}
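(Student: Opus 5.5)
Since both sides of \eqref{eq:exact-cyclic-sampling-factorization} are linear in \(h\) and \(L^p(\mathbb Z_m;X)\) is spanned by \(x\,e_q\), where \(e_q(j):=e^{2\pi iqj/m}\), \(q\in\mathbb Z_m\), \(x\in X\), it suffices to verify the identity on a single character \(e_q\). Equivalently, since \(\mathcal J_m\) is injective, we check that \(D_m\mathcal U_m\mathcal E_mT^{\mathbb T}_{b_{m,J}}\mathcal J_m e_q=P_{m,J}e_q\). Write \(r:=\rho_m(q)\in R_m\). By \eqref{def of P}, \(P_{m,J}e_q=\mathbf 1_J(r)e_q\), since \(e_q(j)=e^{2\pi irj/m}\).

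\emph{Step 1.} Compute \(T^{\mathbb T}_{b_{m,J}}\mathcal J_me_q\) from \eqref{eq:step-embedding-fourier-coefficients}. The Fourier coefficients of \(\mathcal J_me_q\) are \(\operatorname{sinc}(n/m)\) for \(n\equiv q\pmod m\) and \(0\) otherwise. The symbol \(b_{m,J}\) is supported in \(J\subset R_m\), and \(R_m\) contains exactly one representative of each residue class, namely \(r\) for the class of \(q\). So at most the single frequency \(n=r\) survives, and only if \(r\in J\). There \(b_{m,J}(r)\operatorname{sinc}(r/m)=1\), because \(|r/m|\le 1/2\) keeps sinc nonzero. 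Hence \(T^{\mathbb T}_{b_{m,J}}\mathcal J_me_q=\mathbf 1_J(r)\,e^{2\pi i r\,\cdot}\), a pure torus character.

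\emph{Step 2.} Compute \(\mathcal E_m\) of the character \(e^{2\pi i r t}\) by averaging over the cell \(C_{m,j}\). A direct integral gives
\[
m\int_{C_{m,j}}e^{2\pi irt}\,dt=e^{2\pi irj/m}\operatorname{sinc}(r/m),
\]
using the centered cells and, for \(r\neq0\), the identity \(\sin(\pi r/m)/(\pi r/m)\). So \(\mathcal E_m e^{2\pi ir\cdot}=\operatorname{sinc}(r/m)\,\mathcal J_me_q\). Applying \(\mathcal U_m\) then gives \(\operatorname{sinc}(r/m)e_q\).

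\emph{Step 3.} \(D_m\) multiplies \(e_q\) by \(d_m(q)=\Phi(\rho_m(q)/m)=\Phi(r/m)=1/\operatorname{sinc}(r/m)\). Chaining Steps 1–3 yields \(\mathbf 1_J(r)e_q=P_{m,J}e_q\). Applying \(\mathcal J_m\) to both sides proves \eqref{eq:exact-cyclic-sampling-factorization}.

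The only delicate points are bookkeeping. First, the uniqueness of the representative \(r\) in \(R_m\) is what kills the aliased frequencies \(n\equiv q\), \(n\ne r\). Second, the cell averaging must produce exactly the factor \(\operatorname{sinc}(r/m)\), with the correct phase \(e^{2\pi irj/m}\) coming from the centering of \(C_{m,j}\), including the modulo-\(1\) wraparound of the cell \(C_{m,0}\). Third, \(\operatorname{sinc}(r/m)\ne0\) on \(R_m\). I expect Step 2's integral, together with these conventions, to be the main place where care is needed. Everything is finite-dimensional, so no convergence issues arise.
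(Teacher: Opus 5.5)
Your proof is correct and follows essentially the same route as the paper. The paper carries out the same three computations directly on a general \(h\) rather than on single characters \(e_q\), which is equivalent by linearity: the torus multiplier via \eqref{eq:step-embedding-fourier-coefficients}, the cell averages producing \(\operatorname{sinc}(n/m)e^{2\pi inj/m}\), and the removal of that factor by \(D_m\) using \(\rho_m(n\bmod m)=n\) for \(n\in J\subset R_m\). The wraparound of \(C_{m,0}\) that you flag is harmless, since the integrand \(e^{2\pi irt}\) is \(1\)-periodic.
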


\begin{proof}
Recall that \(\mathcal E_m\) is the conditional expectation onto the
\(\sigma\)-algebra generated by the cells
\((C_{m,j})_{j=0}^{m-1}\). Thus, for an integrable function $f$,
\[
(\mathcal E_mf)(t)
=
m\int_{C_{m,j}}f(s)\,ds,
\qquad t\in C_{m,j}.
\]
Since
\[
(\mathcal J_mh)(t)=h(j),
\qquad t\in C_{m,j},
\]
if \(g\) is constant on every cell \(C_{m,j}\), then
\(\mathcal U_mg=\mathcal J_m^{-1}g\) is given by
\[
(\mathcal U_mg)(j)=g(t),
\qquad t\in C_{m,j}.
\]
Equivalently,
\[
\mathcal U_m\mathcal J_m
=
\operatorname{Id},
\qquad
\mathcal J_m\mathcal U_m\mathcal E_m
=
\mathcal E_m.
\]

For an interval \(J\subset R_m\), the torus Fourier multiplier associated
with the symbol \(b_{m,J}\) is
\[
T_{b_{m,J}}^{\mathbb T}f(t)
=
\sum_{n\in\mathbb Z}
b_{m,J}(n)\widehat f(n)e^{2\pi int}.
\]
By \eqref{eq:step-embedding-fourier-coefficients} and the definition of
\(b_{m,J}\),
\begin{align*}
T_{b_{m,J}}^{\mathbb T}\mathcal J_mh(t)
&=
\sum_{n\in J}
\frac{\widehat{\mathcal J_mh}(n)}
{\operatorname{sinc}(n/m)}e^{2\pi int}
\\
&=
\sum_{n\in J}
\widehat h(n\bmod m)e^{2\pi int}.
\end{align*}
For \(t\in C_{m,j}\), the definition of \(\mathcal E_m\) gives
\begin{align*}
(\mathcal E_mT_{b_{m,J}}^{\mathbb T}\mathcal J_mh)(t)
&=
m\int_{C_{m,j}}T_{b_{m,J}}^{\mathbb T}\mathcal J_mh(s)\,ds
\\
&=
\sum_{n\in J}
\widehat h(n\bmod m)
m\int_{C_{m,j}}e^{2\pi ins}\,ds
\\
&=
\sum_{n\in J}
\operatorname{sinc}\!\left(\frac{n}{m}\right)
\widehat h(n\bmod m)e^{2\pi inj/m}.
\end{align*}
It follows from the definition of \(\mathcal U_m\) that
\[
\bigl(\mathcal U_m\mathcal E_mT_{b_{m,J}}^{\mathbb T}\mathcal J_mh\bigr)(j)
=
\sum_{n\in J}
\operatorname{sinc}\!\left(\frac{n}{m}\right)
\widehat h(n\bmod m)e^{2\pi inj/m}.
\]
Since \(J\subset R_m\), for every \(n\in J\),
\[
\rho_m(n\bmod m)=n.
\]
Therefore,
\[
d_m(n\bmod m)
=
\Phi\!\left(\frac{n}{m}\right)
=
\frac1{\operatorname{sinc}(n/m)}.
\]
Applying \(D_m\) removes the factor
\(\operatorname{sinc}(n/m)\), and hence
\begin{align*}
\bigl(D_m\mathcal U_m\mathcal E_mT_{b_{m,J}}^{\mathbb T}\mathcal J_mh\bigr)(j)
&=
\sum_{n\in J}
\widehat h(n\bmod m)e^{2\pi inj/m}
\\
&=
P_{m,J}h(j).
\end{align*}
Finally, applying \(\mathcal J_m\) to both sides yields
\[
\mathcal J_mD_m\mathcal U_m\mathcal E_m
T_{b_{m,J}}^{\mathbb T}\mathcal J_mh
=
\mathcal J_mP_{m,J}h,
\]
which proves \eqref{eq:exact-cyclic-sampling-factorization}.
\end{proof}

The factorization in
Lemma~\ref{lem:exact-cyclic-sampling-factorization} now links the two
decoupling inequalities proved in
Section~\ref{sec:decoupling-estimates} and yields the following decoupling inequality for centered
cyclic Fourier projections.
\begin{proposition}[A decoupling estimate for centered cyclic projections]
\label{prop:independent-centered-cyclic-intervals}
Let \(1<p<\infty\), and let \(X\) be a \(\operatorname{UMD}\) Banach space. For every
\(N\ge1\), every family of integers \((m_k)_{k=1}^N\) with
\(m_k\ge2\), every family of intervals \(J_k\subset R_{m_k}\), every
\(h_k\in L^p_0(\mathbb Z_{m_k};X)\), and every scalar family
\((\theta_k)_{k=1}^N\) satisfying \(|\theta_k|\le1\),
\begin{equation}\label{eq:independent-centered-cyclic-intervals}
\left\|
\sum_{k=1}^N
\theta_kP_{m_k,J_k}h_k(y_k)
\right\|_{L^p\left(\prod_{k=1}^N\mathbb Z_{m_k};X\right)}
\lesssim_p \beta_{p,X}^2
\left\|
\sum_{k=1}^Nh_k(y_k)
\right\|_{L^p\left(\prod_{k=1}^N\mathbb Z_{m_k};X\right)}.
\end{equation}
\end{proposition}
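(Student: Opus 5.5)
The plan is to feed the sampling factorization of Lemma~\ref{lem:exact-cyclic-sampling-factorization} through the three earlier decoupling tools: the torus estimate \eqref{eq:diagonal-bv-independent}, contractivity of conditional expectation, and the cyclic convolution estimate \eqref{eq:decoupling-cyclic-convolution}. We work on the product space \(\prod_k\mathbb Z_{m_k}\) and transport to \(\mathbb T^N\) via the isometries \(\mathcal J_{m_k}\) acting coordinatewise.

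First, we remove the scalars \(\theta_k\). The functions \(P_{m_k,J_k}h_k(y_k)\) are independent. They are mean zero: if \(0\in J_k\), the zero coefficient is \(\widehat h_k(0)=0\) anyway. Hence \eqref{eq:independent-complex-contraction} of Lemma~\ref{lem:independent-umd-randomization} reduces the claim to \(\theta_k\equiv1\).

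Next, apply \(\mathcal J_{m_k}\) in each coordinate. The map \(y\mapsto(t_k)_k\) with \(t_k\in C_{m_k,y_k}\) is measure preserving, so
\[
\Bigl\|\sum_k P_{m_k,J_k}h_k(y_k)\Bigr\|_{L^p(\prod_k\mathbb Z_{m_k};X)}=\Bigl\|\sum_k \mathcal J_{m_k}P_{m_k,J_k}h_k(t_k)\Bigr\|_{L^p(\mathbb T^N;X)}.
\]
By \eqref{eq:exact-cyclic-sampling-factorization}, \(\mathcal J_{m_k}P_{m_k,J_k}h_k=\mathcal J_{m_k}D_{m_k}g_k\) with \(g_k:=\mathcal U_{m_k}\mathcal E_{m_k}T^{\mathbb T}_{b_{m_k,J_k}}\mathcal J_{m_k}h_k\in L^p(\mathbb Z_{m_k};X)\). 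We check that \(g_k\) has mean zero. Since \(\mathcal J_{m_k}h_k\) has mean zero, so does \(T^{\mathbb T}_{b}\mathcal J_{m_k}h_k\) (its zero coefficient is \(b(0)\cdot0\)). Conditional expectation preserves the mean, and \(\mathcal U_{m_k}\) preserves it because the cells have equal measure. Undoing \(\mathcal J\), Proposition~\ref{prop:inverse-sinc-uniform-kernel} then bounds the expression by \(\lesssim\|\sum_k g_k(y_k)\|=\|\sum_k\mathcal E_{m_k}T^{\mathbb T}_{b_{m_k,J_k}}\mathcal J_{m_k}h_k(t_k)\|_{L^p(\mathbb T^N;X)}\).

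The operator \(\bigotimes_k\mathcal E_{m_k}\) is the conditional expectation on \(\mathbb T^N\) onto the product \(\sigma\)-algebra of cells. Applied to \(\sum_k F_k(t_k)\), it returns \(\sum_k\mathcal E_{m_k}F_k(t_k)\), because each \(\mathcal E_{m_l}\) fixes functions not depending on \(t_l\). It is contractive on \(L^p(\mathbb T^N;X)\), which removes \(\mathcal E\). We then apply Theorem~\ref{thm:diagonal-bv-independent} with \(f_k=\mathcal J_{m_k}h_k\in L^p_0(\mathbb T;X)\) and symbols \(b_{m_k,J_k}\). These are finitely supported with \(\|b_{m_k,J_k}\|_{\mathrm{var}}\le V\) uniformly, by \eqref{eq:uniform-bv-cyclic-symbol}. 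This yields \(\lesssim_p\beta_{p,X}^2\|\sum_k\mathcal J_{m_k}h_k(t_k)\|_{L^p(\mathbb T^N;X)}=\beta_{p,X}^2\|\sum_kh_k(y_k)\|\), again by the measure-preserving identification.

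The main point needing care is the mean-zero bookkeeping that makes Proposition~\ref{prop:inverse-sinc-uniform-kernel} and Theorem~\ref{thm:diagonal-bv-independent} applicable. The other delicate point is justifying that \(\bigotimes_k\mathcal E_{m_k}\) acts diagonally on sums of single-variable functions. Both are routine once stated. All constants are independent of \(N\) and the \(m_k\), because \(V\) and the \(\ell^1\) bound on \(\kappa_m\) are uniform in \(m\).
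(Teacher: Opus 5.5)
Your proposal is correct and follows the paper's proof: apply the sampling factorization, use the cyclic convolution estimate on the mean-zero functions $\mathcal U_{m_k}\mathcal E_{m_k}T^{\mathbb T}_{b_{m_k,J_k}}\mathcal J_{m_k}h_k$, remove the cell conditional expectations by contractivity of the product conditional expectation, and finish with the torus Marcinkiewicz decoupling estimate. The only cosmetic difference is that you strip off the $\theta_k$ at the outset via the contraction principle for independent mean-zero sums, whereas the paper absorbs them into the symbols $\theta_k b_{m_k,J_k}$, whose variation remains uniformly bounded.
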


\begin{proof}
Since \(h_k\) has mean zero and \(\mu(C_{m_k,j})=1/m_k\),
\[
\int_{\mathbb T}\mathcal J_{m_k}h_k\,d\mu
=
\frac1{m_k}\sum_{j\in\mathbb Z_{m_k}}h_k(j)
=
0.
\]
Set
\[
g_k
:=
\theta_kT_{b_{m_k,J_k}}^{\mathbb T}\mathcal J_{m_k}h_k
=
T_{\theta_kb_{m_k,J_k}}^{\mathbb T}\mathcal J_{m_k}h_k,
\qquad
f_k
:=
\mathcal U_{m_k}\mathcal E_{m_k}g_k.
\]
The multiplier \(T_{b_{m_k,J_k}}^{\mathbb T}\), the conditional
expectation \(\mathcal E_{m_k}\), and the identification
\(\mathcal U_{m_k}\) all map mean-zero functions to mean-zero functions.
Consequently, each \(f_k\) has mean zero. The factorization
\eqref{eq:exact-cyclic-sampling-factorization}, followed by
Proposition~\ref{prop:inverse-sinc-uniform-kernel}, gives
\begin{align*}
\left\|
\sum_k\theta_kP_{m_k,J_k}h_k(y_k)
\right\|_{L^p\left(\prod_{k=1}^N\mathbb Z_{m_k};X\right)}
&=
\left\|
\sum_k\theta_k\mathcal J_{m_k}P_{m_k,J_k}h_k(t_k)
\right\|_{L^p(\mathbb T^N;X)}
\\
&=
\left\|
\sum_k\mathcal J_{m_k}D_{m_k}f_k(t_k)
\right\|_{L^p(\mathbb T^N;X)}\\
&\lesssim
\left\|
\sum_k\mathcal J_{m_k}f_k(t_k)
\right\|_{L^p(\mathbb T^N;X)}
=
\left\|
\sum_k\mathcal E_{m_k}g_k(t_k)
\right\|_{L^p(\mathbb T^N;X)}.
\end{align*}
Here we use the fact that each
\(\mathcal J_{m_k}:L^p(\mathbb Z_{m_k};X)\to L^p(\mathbb T;X)\) is an
isometry acting in an independent coordinate \(t_k\), and $\mathcal J_{m_k}\mathcal U_{m_k}\mathcal E_{m_k}=\mathcal E_{m_k}$.

Let \(\mathcal A_{\mathrm{cell}}\) denote the product of the cell
\(\sigma\)-algebras:
\[
\mathcal A_{\mathrm{cell}}
:=
\bigotimes_{k=1}^N
\sigma\left(\{C_{m_k,j}\}_{j=0}^{m_k-1}\right).
\]
Since \(g_k\) depends only on \(t_k\),
\[
\sum_k\mathcal E_{m_k}g_k(t_k)
=
\mathbb E\left(
\sum_kg_k(t_k)\,\middle|\,\mathcal A_{\mathrm{cell}}
\right).
\]
Hence, by contractivity of conditional expectation,
\[
\left\|
\sum_k\mathcal E_{m_k}g_k(t_k)
\right\|_{L^p(\mathbb T^N;X)}
\le
\left\|
\sum_kg_k(t_k)
\right\|_{L^p(\mathbb T^N;X)}.
\]
By \eqref{eq:uniform-bv-cyclic-symbol}, the symbols
\(\theta_kb_{m_k,J_k}\) satisfy the hypotheses of
Theorem~\ref{thm:diagonal-bv-independent}. Therefore,
\[
\left\|
\sum_kg_k(t_k)
\right\|_{L^p(\mathbb T^N;X)}
\lesssim_p \beta_{p,X}^2
\left\|
\sum_k\mathcal J_{m_k}h_k(t_k)
\right\|_{L^p(\mathbb T^N;X)}.
\]
The last norm equals the norm on the right-hand side of
\eqref{eq:independent-centered-cyclic-intervals}, since the embeddings
\(\mathcal J_{m_k}\) act isometrically in independent coordinates.
\end{proof}

With the above decoupling inequality, we can now prove Theorem \ref{thm:umd-implies-CPD}.

\begin{proof}[Proof of Theorem~\ref{thm:umd-implies-CPD}]
We first show that the \(\operatorname{UMD}\) property implies the \(\operatorname{CPD}_p\) property.
Assume that \(X\) is \(\operatorname{UMD}\). Let
\[
\mathcal K_0
:=
\left\{
k:
0\le a_k\le\left\lfloor\frac{m_k}{2}\right\rfloor
\right\},
\qquad
\mathcal K_1
:=
\{1,\ldots,N\}\setminus\mathcal K_0.
\]
If \(k\in\mathcal K_0\), then from the definitions \eqref{def of P} and \eqref{def of Q},
\[
Q_{m_k,a_k}
=
P_{m_k,\{-a_k,\ldots,-1\}}.
\]
Here and below, the interval \(\{-a_k,\ldots,-1\}\) is understood to be
empty when \(a_k=0\).
If \(k\in\mathcal K_1\), set
\(
b_k:=m_k-a_k-1.
\)
Then \(0\le b_k<m_k/2\), and, on mean-zero functions,
\(P_{m_k,\{0\}}=0\). Consequently,
\begin{equation*}
Q_{m_k,a_k}
=
\operatorname{Id}-P_{m_k,\{1,\ldots,b_k\}}.
\end{equation*}
The interval \(\{1,\ldots,b_k\}\) is understood to be empty when
\(b_k=0\). Therefore,
\begin{align*}
\sum_{k=1}^NQ_{m_k,a_k}h_k
&=
\sum_{k\in\mathcal K_1}h_k
+
\sum_{k\in\mathcal K_0}
P_{m_k,\{-a_k,\ldots,-1\}}h_k
\notag\\
&\qquad-
\sum_{k\in\mathcal K_1}
P_{m_k,\{1,\ldots,b_k\}}h_k.
\end{align*}
The first sum on the right-hand side is the conditional expectation of
\(\sum_{k=1}^Nh_k(y_k)\) with respect to
\[
\mathcal A_{\mathcal K_1}
:=
\sigma(y_k:k\in\mathcal K_1).
\]
Its norm is therefore bounded by the norm on the right-hand side of
\eqref{eq:CPD-property}. The remaining two sums can be combined into a
single family of centered cyclic interval projections with coefficients
\(\theta_k\in\{-1,1\}\). Proposition~\ref{prop:independent-centered-cyclic-intervals}
therefore proves \eqref{eq:CPD-property} with $\gamma_{p,X}\lesssim_p \beta_{p,X}^2$.

We then show that the \(\operatorname{CPD}_p\) property implies the \(\operatorname{UMD}\) property.
Assume that \(X\) has the \(\operatorname{CPD}_p\) property. Let
\[
h(t)=\sum_{|n|\le L}x_ne^{2\pi int}
\]
be an \(X\)-valued trigonometric polynomial, where \(L\ge1\). For an
integer \(m>2L\), define
\[
g_m(j):=h(j/m)-x_0,\qquad j\in\mathbb Z_m.
\]
It is easy to verify that
\(g_m\in L_0^p(\mathbb Z_m;X)\) and
\[
Q_{m,L}g_m(j)
=\sum_{n=-L}^{-1}x_ne^{2\pi inj/m}
=(P_-^{\mathbb T}h)(j/m)
\]
by \eqref{def of Q},
where \(P_-^{\mathbb T}:=\operatorname{Id}-P_+^{\mathbb T}\) and $P_+^{\mathbb T}$ denotes the Riesz projection. Applying \eqref{eq:CPD-property} with \(N=1\) gives
\[
\left(\frac1m\sum_{j=0}^{m-1}
\|(P_-^{\mathbb T}h)(j/m)\|_X^p\right)^{1/p}
\le \gamma_{p,X}
\left(\frac1m\sum_{j=0}^{m-1}
\|h(j/m)-x_0\|_X^p\right)^{1/p}.
\]
Letting \(m\to\infty\) and using convergence of the Riemann sums, we obtain
\[
\|P_-^{\mathbb T}h\|_{L^p(\mathbb T;X)}
\le \gamma_{p,X}\|h-x_0\|_{L^p(\mathbb T;X)}
\le 2\gamma_{p,X}\|h\|_{L^p(\mathbb T;X)}.
\]
Since \(P_+^{\mathbb T}h=h-P_-^{\mathbb T}h\), it follows that
\[
\|P_+^{\mathbb T}h\|_{L^p(\mathbb T;X)}
\le (1+2\gamma_{p,X})\|h\|_{L^p(\mathbb T;X)}.
\]
By density, \(P_+^{\mathbb T}\) extends to a bounded operator on
\(L^p(\mathbb T;X)\). The Riesz-projection characterization in
Remark~\ref{Riesz} implies that \(X\) is UMD with $\beta_{p,X}\lesssim_p \gamma_{p,X}^2$.
\end{proof}

\section{Proof of Corollary~\ref{cor:umd-partial-sums-schauder}}
\label{sec:schauder}

With Theorem~\ref{thm:uniform-umd-vilenkin-partial-sums},
we now turn to the equivalence between the UMD property,
the uniform boundedness of the partial-sum operators, and
the Schauder-decomposition property for a fixed Vilenkin system.
The proof follows essentially the same lines as in the bounded case.
We first recall the terminology needed for the statement of the corollary and for the discussion of block decompositions in the introduction.

\begin{definition}[Schauder basis and Schauder decomposition]
Let \(X\) be a Banach space. A sequence \((e_j)_{j\ge0}\) in \(X\) is
a \emph{Schauder basis} if, for every \(x\in X\), there exists a
unique sequence of scalars \((a_j)_{j\ge0}\) such that
\(
x=\sum_{j=0}^\infty a_je_j,
\)
where the series converges in the norm of \(X\).
More generally, a sequence \((X_j)_{j\ge0}\) of closed subspaces of
\(X\) is a \emph{Schauder decomposition} if every \(x\in X\) admits a
unique representation
\[
x=\sum_{j=0}^\infty x_j,
\qquad x_j\in X_j,
\]
with convergence in the norm of \(X\). A Schauder basis
\((e_j)_{j\ge0}\) induces the one-dimensional Schauder decomposition
\((\mathbb Ce_j)_{j\ge0}\).
\end{definition}

To describe convergence in terms of operators, suppose that \(X\)
admits a Schauder decomposition \((X_n)_{n\ge0}\).
The associated coordinate projections are denoted
by \(\mathsf D_n\), \(n\ge0\), and are given by
\[
\mathsf D_n:X\to X_n,
\qquad
\mathsf D_nx:=x_n
\quad\text{whenever}\quad
x=\sum_{j=0}^\infty x_j.
\]

The discussion of coarse and fine blockings in the introduction also
uses the stronger notion of unconditionality, which we recall for
completeness.

\begin{definition}[Unconditionality]
A Schauder decomposition \((X_n)_{n\ge0}\), with coordinate
projections \((\mathsf D_n)_{n\ge0}\), is \emph{unconditional} if,
for every \(x\in X\), the series
\(
\sum_{n\ge0}\mathsf D_nx
\)
converges unconditionally to \(x\). Equivalently, there exists a
constant \(K<\infty\) such that
\begin{equation}\label{eq:unconditional-decomposition-definition}
\left\|
\sum_{n\in F}\theta_n\mathsf D_nx
\right\|_X
\le K\|x\|_X
\end{equation}
for every finite set \(F\subset\mathbb N\), every choice of signs
\(\theta_n\in\{-1,1\}\), and every \(x\in X\). The least admissible
constant \(K\) is called the \emph{unconditional constant} of the
decomposition. A Schauder basis is called unconditional if its
associated one-dimensional Schauder decomposition \((\mathbb Ce_j)_{j\ge0}\) is unconditional.
\end{definition}

We now prove Corollary~\ref{cor:umd-partial-sums-schauder}.
The implication from UMD to uniform boundedness is furnished by
Theorem~\ref{thm:uniform-umd-vilenkin-partial-sums}.
The equivalence with the Schauder-decomposition property follows from
density of the Vilenkin polynomials and the uniform boundedness
principle. To recover the UMD property, we use the Paley conjugation
identity to obtain bounds for martingale transforms.
\begin{proof}[Proof of
Corollary~\ref{cor:umd-partial-sums-schauder}]
The implication
\(\mathrm{(i)}\Rightarrow\mathrm{(ii)}\), including uniformity in
\(\mathbf m\), follows directly from
Theorem~\ref{thm:uniform-umd-vilenkin-partial-sums}.
To prove
\(\mathrm{(ii)}\Rightarrow\mathrm{(iii)}\), set
\[
C:=
\sup_{n\ge1}
\|S_n\|_{L^p(\Gm;X)\to L^p(\Gm;X)}
<\infty.
\]
\(X\)-valued Vilenkin polynomials are dense in
\(L^p(\Gm;X)\). Given \(f\in L^p(\Gm;X)\) and \(\varepsilon>0\),
choose an \(X\)-valued Vilenkin polynomial \(g\) such that
\[
\|f-g\|_{L^p(\Gm;X)}<\varepsilon.
\]
For all sufficiently large \(n\), we have \(S_ng=g\), and hence
\[
\begin{aligned}
\|S_nf-f\|_{L^p(\Gm;X)}
&\le
\|S_n(f-g)\|_{L^p(\Gm;X)}
+\|g-f\|_{L^p(\Gm;X)}
\\
&\le
C\|f-g\|_{L^p(\Gm;X)}
+\|g-f\|_{L^p(\Gm;X)}
\\
&<
(C+1)\varepsilon.
\end{aligned}
\]
Thus \(S_nf\to f\) in \(L^p(\Gm;X)\). Equivalently,
\(
f=\sum_{n=0}^\infty\widehat f(n)\psi_n
\)
with convergence in \(L^p(\Gm;X)\).
Uniqueness of this representation follows from the uniqueness
of the Fourier--Vilenkin coefficients.
Therefore, \((\psi_nX)_{n\ge0}\) is a Schauder decomposition
of \(L^p(\Gm;X)\).

Conversely, if \(\mathrm{(iii)}\) holds, then the partial-sum
projections of this Schauder decomposition are precisely the
operators \(S_n\). In particular, \(S_nf\to f\) for every
\(f\in L^p(\Gm;X)\), and the Banach--Steinhaus theorem yields
\[
\sup_{n\ge1}
\|S_n\|_{L^p(\Gm;X)\to L^p(\Gm;X)}
<\infty.
\]
This proves
\(\mathrm{(iii)}\Rightarrow\mathrm{(ii)}\).

It remains to prove
\(\mathrm{(ii)}\Rightarrow\mathrm{(i)}\).
Let \(J\ge0\), choose
\(\eta_0,\ldots,\eta_J\in\{0,1\}\), and set
\[
n_\eta
:=
\sum_{j=0}^J\eta_j(m_j-1)M_j.
\]
We adopt the convention \(S_0:=0\).
With $B_{n_\eta}$ and $\pi_{j,a}$ defined as in
Section~\ref{s2.2}, we have
\[
P_{B_{n_\eta}}
=
\sum_{j=0}^J
\pi_{j,\eta_j(m_j-1)}
=
\sum_{j=0}^J\eta_jd_j.
\]
The Paley conjugation identity
\eqref{eq:paley-conjugation-operator} therefore gives
\begin{equation}\label{eq:weisz-partial-sum-identity}
\overline{\psi_{n_\eta}}\,
S_{n_\eta}\bigl(\psi_{n_\eta}f\bigr)
=
\sum_{j=0}^J\eta_jd_jf.
\end{equation}
Since multiplication by a Vilenkin character is an isometry on
\(L^p(\Gm;X)\), it follows from
\eqref{eq:partial-sum-uniform-corrected} that
\begin{equation}\label{eq:zero-one-martingale-transform}
\left\|
\sum_{j=0}^J\eta_jd_jf
\right\|_{L^p(\Gm;X)}
\le
C\|f\|_{L^p(\Gm;X)}.
\end{equation}

Now let \(\theta_j\in\{-1,1\}\) and set
\(
\eta_j:=\frac{1+\theta_j}{2}.
\)
Since
\[
\sum_{j=0}^J\theta_jd_jf
=
2\sum_{j=0}^J\eta_jd_jf
-
\sum_{j=0}^Jd_jf
=
2\sum_{j=0}^J\eta_jd_jf
-
(E_{J+1}-E_0)f,
\]
we obtain
\[
\begin{aligned}
\left\|
\sum_{j=0}^J\theta_jd_jf
\right\|_{L^p(\Gm;X)}
&\le
2\left\|
\sum_{j=0}^J\eta_jd_jf
\right\|_{L^p(\Gm;X)}
+
\|(E_{J+1}-E_0)f\|_{L^p(\Gm;X)}
\\
&\le
2C\|f\|_{L^p(\Gm;X)}
+
2\|f\|_{L^p(\Gm;X)}
\\
&=
2(C+1)\|f\|_{L^p(\Gm;X)}.
\end{aligned}
\]
By the characterization of UMD spaces in terms of the atomic
Vilenkin filtration recalled in \cite[p.~416]{Weisz2007},
this establishes \(\mathrm{(ii)}\Rightarrow\mathrm{(i)}\)
and completes the proof.
\end{proof}

\section{\texorpdfstring{$\mathcal R$}{R}-boundedness of the fine-block partial sums}
\label{sec:R-bound}

We now prove Theorem~\ref{thm:R-bound} by a Rademacher-valued
analogue of the reduction developed in
Sections~\ref{sec:CPD-to-partial-sums},
\ref{sec:decoupling-estimates}, and~\ref{sec:proof-CPD}. The argument differs from the earlier proofs in two respects:
\begin{enumerate}
\item[\rm{(i)}] The estimate defining $\mathcal R$-boundedness already
involves Rademacher randomization, so the additional randomization
steps used in Sections~\ref{sec:decoupling-estimates}
and~\ref{s5.3} are unnecessary. Consequently, the decoupling
estimate on the torus may be combined directly with the sampling
factorization to obtain the desired estimate for centered cyclic
projections, without invoking the cyclic decoupling convolution estimate
of Proposition~\ref{prop:inverse-sinc-uniform-kernel}.

\item[\rm{(ii)}]  The coordinate projections arising from the fine
blocking can be expressed as sums of two centered cyclic
projections; see \eqref{coincide1}. Therefore, no separate decoupling estimate for terminal
cyclic projections of the type appearing in
\eqref{eq:CPD-property} is needed, and the proof can be completed
directly from the centered cyclic projection estimate.
\end{enumerate}

We begin with a randomized analogue of the centered cyclic
projection estimate in
Proposition~\ref{prop:independent-centered-cyclic-intervals}.
We retain the notation \(R_m\) for the centered complete residue
system introduced in Section~\ref{sec:cyclic-convolution}
and \(P_{m,J}\) for the centered cyclic Fourier projection
defined in Section~\ref{s5.3}.

\begin{lemma}\label{lem:R-independent-cyclic}
Let \(X\) be a \(\operatorname{UMD}\) Banach space and \(1<p<\infty\).
For \(a=1,\ldots,N\), let
\(k_a\in\{0,\ldots,K\}\), let
\(I_a\subset R_{m_{k_a}}\) be an interval, and let
\(
h_a\in L^p(\mathbb Z_{m_{k_a}};X).
\)
Then
\begin{equation}\label{eq:R-independent-cyclic}
\begin{aligned}
\left\|
 \sum_{a=1}^N
 \varepsilon_a
 P_{m_{k_a},I_a}h_a(y_{k_a})
\right\|_{
 L^p\left(
 \Omega_\varepsilon
 \times\prod_{k=0}^K\mathbb Z_{m_k};X
 \right)}
\lesssim_p\beta_{p,X}^2
\left\|
 \sum_{a=1}^N
 \varepsilon_a h_a(y_{k_a})
\right\|_{
 L^p\left(
 \Omega_\varepsilon
 \times\prod_{k=0}^K\mathbb Z_{m_k};X
 \right)} .
\end{aligned}
\end{equation}
Here the indices $k_a$ need not be distinct and the functions $h_a$ need not have mean zero.
\end{lemma}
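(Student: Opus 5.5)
The plan is to follow the route of Proposition~\ref{prop:independent-centered-cyclic-intervals}, using the sampling factorization of Lemma~\ref{lem:exact-cyclic-sampling-factorization}. The Rademacher signs in \eqref{eq:R-independent-cyclic} are already present, so the randomization step and the convolution estimate of Proposition~\ref{prop:inverse-sinc-uniform-kernel} are replaced by the contraction principle and by a direct treatment of the factor $D_m$.

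\emph{Step 1: transfer to the torus.} Replace each $y_k$ by a torus variable $t_k\in\mathbb T$ through the isometric embeddings $\mathcal J_{m_k}$ acting in independent coordinates. Then both sides of \eqref{eq:R-independent-cyclic} become norms on $L^p(\Omega_\varepsilon\times\mathbb T^{K+1};X)$. We also have $\mathcal J_{m}P_{m,J}=\mathcal J_m D_m\mathcal U_m\mathcal E_m T^{\mathbb T}_{b_{m,J}}\mathcal J_m$.

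\emph{Step 2: removing the factors $D_{m}$.} The operator $D_m$ does not depend on $J$. I would apply the factorization in a rearranged order. Write $P_{m,J}=D_m\widetilde P_{m,J}$, where $\widetilde P_{m,J}:=\mathcal U_m\mathcal E_mT^{\mathbb T}_{b_{m,J}}\mathcal J_m$ is the cyclic multiplier with symbol $\operatorname{sinc}(\rho_m(q)/m)\mathbf 1_J(\rho_m(q))$.

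Now use the kernel representation $D_m=W\,\mathbb E_{\mathbb S}[\theta(\mathbb S)\tau_{\mathbb S}]$ from the proof of Proposition~\ref{prop:inverse-sinc-uniform-kernel}, with one random shift $\mathbb S_k$ per coordinate $k$. Translations in $y_k$ commute with $\widetilde P$ and preserve the measure. The unimodular factors $\theta$ depend on $a$ only through $k_a$ and on the shift. Jensen's inequality in $\mathbb S$, followed by the Kahane contraction principle (the factors have modulus at most $1$ and are fixed once $\mathbb S$ and $y$ are fixed), therefore reduces matters to bounding $\|\sum_a\varepsilon_a\widetilde P_{m_{k_a},I_a}h_a(y_{k_a})\|$. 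Applying the factor-$y$ translations $\tau$ jointly to all $a$ sharing the same $k$ is exactly a measure-preserving change of variables. I expect this step to be the main obstacle: the phases $\theta_k(\mathbb S_k)$ multiply whole groups of terms, so the contraction principle must be applied conditionally on $y$. It works because the multipliers are scalars.

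\emph{Step 3: bounding the remaining sum.} We have $\mathcal J\widetilde P=\mathcal E_m T^{\mathbb T}_{b_{m,J}}\mathcal J$. The sum $\sum_a\varepsilon_a\mathcal E_{m_{k_a}}g_a(t_{k_a})$ is the conditional expectation onto $\mathcal A_{\mathrm{cell}}$, with $\varepsilon$ untouched, so it is contractive. It then remains to show
\[
\Bigl\|\sum_a\varepsilon_aT^{\mathbb T}_{b_a}\mathcal J h_a(t_{k_a})\Bigr\|\lesssim_p\beta_{p,X}^2\Bigl\|\sum_a\varepsilon_a\mathcal Jh_a(t_{k_a})\Bigr\|,
\]
with $\|b_a\|_{\mathrm{var}}$ uniformly bounded by \eqref{eq:uniform-bv-cyclic-symbol}.

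For this I would repeat the proof of Theorem~\ref{thm:diagonal-bv-independent} without its first randomization. Decompose each $b_a$ as a random average of interval indicators, using independent variables $\nu_a$ (the contraction principle absorbs the phases). Then apply the common-shift trick $t_k\mapsto t_k+u$ for all $k$ and invoke Lemma~\ref{lem:randomized-torus-intervals} in the variable $u$, with range space $L^p(\mathbb T^{K+1};X)$ (via Lemma~\ref{Bochner1}). Finally, use a measure-preserving change of variables. None of this requires mean zero or distinct indices, since the Rademacher sum is already present; this matches the remark after the statement.
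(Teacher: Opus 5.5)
Your proof is correct, but it removes the factor $D_m$ by a different route from the paper.

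\textbf{The paper's route.} The paper never splits off $D_m$. It first performs the common shift $t_k\mapsto t_k+u$. It then replaces $\mathcal E_m$ by the conditional expectations $\mathcal E^{(u)}$ onto the shifted cells and uses the torus symbol $\mathbf 1_I(n)\operatorname{sinc}^{-2}(n/m)$. This single multiplier in $u$ cancels both sinc factors: the one coming from $\widehat{\mathcal J_mh}$ and the one coming from the cell average. The remaining ingredients are the contractivity of $\mathcal E^{(u)}$ for each fixed $u$ and the randomized Marcinkiewicz bound from Lemma~\ref{lem:randomized-torus-intervals} and Proposition~\ref{lem:R}(ii). Proposition~\ref{prop:inverse-sinc-uniform-kernel} is not used; the only analytic input on the symbol is that $\operatorname{sinc}^{-2}$ is $C^1$ on $[-1/2,1/2]$.

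\textbf{Your route.} You keep the unshifted factorization of Lemma~\ref{lem:exact-cyclic-sampling-factorization}, write $P_{m,J}=D_m\widetilde P_{m,J}$, and remove $D_m$ using the uniform $\ell^1$ kernel bound \eqref{eq:inverse-sinc-uniform-l1}. You take one random shift $\mathbb S_k$ per coordinate and combine Jensen, the contraction principle in $\varepsilon$ for fixed $\mathbb S$, and the change of variables $y_k\mapsto y_k+\mathbb S_k$. This gives the bound $2W\bigl\|\sum_a\varepsilon_a\widetilde P_{m_{k_a},I_a}h_a(y_{k_a})\bigr\|$. The step is valid for two reasons:
\begin{enumerate}
\item[(i)] $D_{m_{k_a}}$ depends on $a$ only through $k_a$, so repeated indices are harmless.
\item[(ii)] The Rademacher signs make Kahane's contraction principle available without the mean-zero and distinct-coordinate hypotheses that Lemma~\ref{lem:independent-umd-randomization} needs in Proposition~\ref{prop:inverse-sinc-uniform-kernel}.
\end{enumerate}

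\textbf{Comparison.} Your argument is more modular: it reuses the factorization of Section~\ref{sec:proof-CPD} verbatim and isolates the one new observation, namely that the phases are absorbed by Kahane's contraction principle. The price is the Fourier-decay estimate for $\Phi$. The paper's argument uses fewer ingredients but needs the computation with $u$-dependent shifted cells. Both give the $\beta_{p,X}^2$ dependence.
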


\begin{proof}

We first derive a randomized version of
\eqref{eq:diagonal-bv-independent} from
Lemma~\ref{lem:randomized-torus-intervals}.
Let \(Y\) be a UMD space, let
\(F_a\in L^p(\mathbb T;Y)\), and let
\(\mathfrak b_a:\mathbb Z\to\mathbb C\) be finitely supported
symbols satisfying
\[
\sup_a\|\mathfrak b_a\|_{\operatorname{var}}\le V.
\]
Choose integers \(u_a\le v_a\) such that
\(\operatorname{supp}\mathfrak b_a\subseteq [u_a,v_a]\).
As in the proof of Theorem~\ref{thm:diagonal-bv-independent},
telescoping gives the interval decomposition
\[
\mathfrak b_a(n)
=
\sum_{r=u_a}^{v_a}
\bigl(\mathfrak b_a(r)-\mathfrak b_a(r-1)\bigr)
\mathbf 1_{[r,v_a]}(n).
\]
The sum of the absolute values of the coefficients in this
decomposition is bounded by the variation of the symbol.
Thus, Lemma~\ref{lem:randomized-torus-intervals}, together with
Proposition~\ref{lem:R}{\rm (ii)}, yields
\begin{equation}\label{eq:diagonal-bv-independent-general}
\left\|
\sum_a\varepsilon_aT_{\mathfrak b_a}^{\mathbb T}F_a
\right\|_{L^p(\Omega_\varepsilon\times\mathbb T;Y)}
\lesssim_p\beta_{p,Y}^2
V
\left\|
\sum_a\varepsilon_aF_a
\right\|_{L^p(\Omega_\varepsilon\times\mathbb T;Y)}.
\end{equation}

We next combine the argument based on a common translation
from the proof of Theorem~\ref{thm:diagonal-bv-independent}
with the sampling factorization of
Lemma~\ref{lem:exact-cyclic-sampling-factorization}.
Let $C_{m,j}$ and $\mathcal J_m$ be as in Section~\ref{s5.3}.
For \(a=1,\ldots,N\), we set
\[
\widetilde H_a(u,t)
:=
(\mathcal J_{m_{k_a}}h_a)(t_{k_a}+u),
\quad
u\in\mathbb T,\quad t=(t_0,\ldots,t_K)\in\mathbb T^{K+1}.
\]
For fixed \(u\in\mathbb T\) and \(k=0,\dots,K\),
let \(\mathcal E_k^{(u)}\) be the conditional expectation onto
the $\sigma$-algebra generated by the shifted partition
\[
\{t_k\in\mathbb T:\quad t_k+u\in C_{m_k,j}\},\quad j\in\mathbb Z_{m_k}
\]
of $\mathbb T,$ and set
$$\mathcal E^{(u)}:=\bigotimes_{k=0}^K \mathcal E^{(u)}_k:\quad L^p(\mathbb T^{K+1};X)\to L^p(\mathbb T^{K+1};X).$$
Put
\(
w(s):=\operatorname{sinc}^{-2}(s), |s|\le \frac12,
\)
and, for an interval \(I\subset R_m\), define the finitely
supported symbol
\[
\widetilde b_{m,I}(n):=
\begin{cases}
\mathbf 1_I(n)\,w(n/m),&n\in R_m,\\
0,&n\notin R_m.
\end{cases}
\]
Since \(w\in C^1([-1/2,1/2])\), there is a constant \(V'<\infty\),
independent of \(m\) and \(I\), such that
\begin{equation}\label{var}
\sup_{m\ge2}
\sup_{\substack{I\subset R_m\\ I\ \mathrm{interval}}}
\|\widetilde b_{m,I}\|_{\operatorname{var}}
\le V'.
\end{equation}

Let \(T_{\widetilde b_{m,I}}^{\mathbb T,u}\) denote the Fourier
multiplier acting only in the \(u\)-variable, and set
$\mathfrak b_{a}=\widetilde b_{m_{k_a},I_a}$.
We now adapt the calculation of Fourier coefficients and cell
averages from the proof of
Lemma~\ref{lem:exact-cyclic-sampling-factorization}
to the shifted cells. When \(t_{k_a}+u\in C_{m_{k_a},j}\),
the relevant cell average is given by
\[
m_{k_a}
\int_{C_{m_{k_a},j}-u}
e^{2\pi in(s+u)}\,ds
=
\operatorname{sinc}\left(\frac{n}{m_{k_a}}\right)
e^{2\pi inj/m_{k_a}}.
\]
Combining this identity with
\eqref{eq:step-embedding-fourier-coefficients} and the definition
of the multiplier symbol gives
\begin{equation}\label{cal}
    \begin{aligned}
\mathcal E^{(u)}
T_{\mathfrak b_{a}}^{\mathbb T,u}\widetilde H_a(u,t)
&=
\sum_{n\in I_a}
\mathfrak b_{a}(n)
\operatorname{sinc}^{2}\!\left(\frac{n}{m_{k_a}}\right)
\widehat h_a(n\bmod m_{k_a})
e^{2\pi inj/m_{k_a}}
\\
&=
\sum_{n\in I_a}
\widehat h_a(n\bmod m_{k_a})
e^{2\pi inj/m_{k_a}}
=
\bigl(P_{m_{k_a},I_a}h_a\bigr)(j).
\end{aligned}
\end{equation}
Compared with Lemma~\ref{lem:exact-cyclic-sampling-factorization}, the inverse-sinc correction underlying Proposition~\ref{prop:inverse-sinc-uniform-kernel} is incorporated here into the torus multiplier. 

Set
\(
Y:=L^p(\mathbb T^{K+1};X).
\)
Using $\mu(C_{m_{k_a},j})=1/m_{k_a},$ \eqref{cal}, and
the contractivity of $\mathcal E^{(u)},$ and then applying
\eqref{eq:diagonal-bv-independent-general} with the uniform
bound \eqref{var} to the functions
\(u\mapsto\widetilde H_a(u,\cdot)\), we obtain
\[
\begin{aligned}
\left\|
 \sum_{a=1}^N\varepsilon_a
 \bigl(P_{m_{k_a},I_a}h_a\bigr)(y_{k_a})
\right\|_{
 L^p\left(
 \Omega_\varepsilon\times\prod_{k=0}^K\mathbb Z_{m_k};X
 \right)}
&=
\left\|
 \sum_{a=1}^N\varepsilon_a
 \mathcal E^{(u)}
 T_{\mathfrak b_a}^{\mathbb T,u}\widetilde H_a(u,t)
\right\|_{
 L^p\left(
 \Omega_\varepsilon\times\mathbb T^{K+1}\times\mathbb T;X
 \right)}
\\
&\le
\left\|
 \sum_{a=1}^N\varepsilon_a
 T_{\mathfrak b_a}^{\mathbb T,u}\widetilde H_a(u,t)
\right\|_{
 L^p\left(
 \Omega_\varepsilon\times\mathbb T^{K+1}\times\mathbb T;X
 \right)}
\\
&\lesssim_p\beta_{p,X}^2
\left\|
 \sum_{a=1}^N\varepsilon_a \widetilde H_a(u,t)
\right\|_{
 L^p\left(
 \Omega_\varepsilon\times\mathbb T^{K+1}\times\mathbb T;X
 \right)}\\
 &=\beta_{p,X}^2
\left\|
 \sum_{a=1}^N\varepsilon_a \mathcal{J}_{m_{k_a}}h_a(t_{k_a})
\right\|_{
 L^p\left(
 \Omega_\varepsilon\times\mathbb T^{K+1};X
 \right)}.
\end{aligned}
\]
The last equality follows from the same argument using
translation invariance as in the proof of
Theorem~\ref{thm:diagonal-bv-independent}: for each fixed \(u\),
the transformation
\[
(t_0,\ldots,t_K)
\longmapsto
(t_0+u,\ldots,t_K+u)
\]
preserves Haar measure. Finally, the isometric property of
\(\mathcal J_{m_k}\) gives
\[
\left\|
 \sum_{a=1}^N\varepsilon_a \mathcal{J}_{m_{k_a}}h_a(t_{k_a})
\right\|_{
 L^p\left(
 \Omega_\varepsilon\times\mathbb T^{K+1};X
 \right)}
=
\left\|
 \sum_{a=1}^N\varepsilon_a h_a(y_{k_a})
\right\|_{
 L^p\left(
 \Omega_\varepsilon\times\prod_{k=0}^K\mathbb Z_{m_k};X
 \right)}.
\]
This proves \eqref{eq:R-independent-cyclic}.

\end{proof}

We next pass from independent cyclic coordinates to the adapted
fine-block projections. Applying the decoupling argument of
Proposition~\ref{prop:adapted-terminal-core} to the randomized estimate
in Lemma~\ref{lem:R-independent-cyclic} gives the following bound.
\begin{lemma}\label{lem:R-adapted-terminal}
Let \(X\) be a \(\operatorname{UMD}\) Banach space and let \(1<p<\infty\).
For \(a=1,\ldots,N\), let
\(k_a\in\{0,\ldots,K\}\), \(1\le j_a<m_{k_a}\), and
\(h_a\in L^p(\Gm;X)\). Then
\begin{equation}\label{eq:R-adapted-terminal}
\left\|
 \sum_{a=1}^N\varepsilon_a
 \sum_{i=1}^{j_a-1}\Delta_{k_a,i}h_a
\right\|_{L^p(\Omega_\varepsilon\times\Gm;X)}
\lesssim_{p}\beta_{p,X}^4
\left\|
 \sum_{a=1}^N\varepsilon_a d_{k_a}h_a
\right\|_{L^p(\Omega_\varepsilon\times\Gm;X)}.
\end{equation}
\end{lemma}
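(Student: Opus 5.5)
We reduce \eqref{eq:R-adapted-terminal} to Lemma~\ref{lem:R-independent-cyclic} in the same way Proposition~\ref{prop:adapted-terminal-core} was reduced to the CPD property: first a coordinate identity turning the fine-block projections into cyclic projections in the variable \(x_{k_a}\), then a tangent-sequence decoupling on an enlarged probability space.

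\textbf{Step 1 (coordinate identity).} The argument giving \eqref{coincide} shows that on the range of \(d_k\),
\[
\sum_{i=1}^{j-1}\Delta_{k,i}=Q^{(x_k)}_{[1,j)},
\]
the cyclic Fourier projection onto frequencies \(\{1,\ldots,j-1\}\subset\mathbb Z_{m_k}\) acting in \(x_k\). Since \(\Delta_{k,i}h=\Delta_{k,i}d_kh\), the left-hand side of \eqref{eq:R-adapted-terminal} equals \(\sum_a\varepsilon_aQ^{(x_{k_a})}_{[1,j_a)}d_{k_a}h_a\). The frequency interval \([1,j_a)\) need not lie in \(R_{m_{k_a}}\). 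We therefore split it as \([1,j_a)\cap[0,\lceil m/2\rceil)\) together with its complement, and the complement becomes a negative interval after applying \(\rho_m\). This gives \(Q^{(x_k)}_{[1,j)}=P_{m_k,I^1}+P_{m_k,I^2}\) with \(I^1,I^2\subset R_{m_k}\) intervals; this is the identity \eqref{coincide1} announced in the text. By the triangle inequality it then suffices to treat a single family \(\sum_a\varepsilon_aP^{(x_{k_a})}_{m_{k_a},I_a}d_{k_a}h_a\).

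\textbf{Step 2 (decoupling).} Group the indices by level \(k\). Set
\[
D_{k+1}(\varepsilon,x):=\sum_{a:k_a=k}\varepsilon_aP^{(x_k)}_{m_k,I_a}d_kh_a .
\]
Regard \(\varepsilon\) as an auxiliary variable and condition on it, or equivalently work in \(L^p(\Omega_\varepsilon;X)\), which is UMD with constant \(\beta_{p,X}\) by Lemma~\ref{Bochner1}. Then \((D_{k+1})_k\) is an \(L^p(\Omega_\varepsilon;X)\)-valued martingale difference sequence for the canonical filtration, since each cyclic projection acts only in \(x_k\) for fixed \(x_{<k}\) and preserves mean zero in \(x_k\). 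The unprojected sequence \(\sum_{a:k_a=k}\varepsilon_ad_kh_a\) is a martingale difference sequence as well.

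Enlarge the space by \(Y_K=\prod_{k=0}^K\mathbb Z_{m_k}\) exactly as in Step~1 of Proposition~\ref{prop:adapted-terminal-core}, replacing \(x_k\) by \(y_k\) in the last variable of each difference. This produces decoupled tangent sequences for both the projected and the unprojected sequences, and the verification is verbatim that of Steps~2--3 there. McConnell's inequality \eqref{eq:umd-tangent-decoupling} in the space \(L^p(\Omega_\varepsilon;X)\) bounds the projected sum by \(\beta_{p,X}\) times the \(y\)-decoupled projected sum.

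\textbf{Step 3 (independent estimate).} Fix \(x\). The decoupled sum is
\[
\sum_a\varepsilon_aP_{m_{k_a},I_a}\bigl[(d_{k_a}h_a)(x_{<k_a},\cdot)\bigr](y_{k_a}),
\]
which is exactly the setting of Lemma~\ref{lem:R-independent-cyclic}, with repeated \(k_a\) allowed. Apply that lemma pointwise in \(x\) (equivalently with range \(L^p(\Gm;X)\)) to get the factor \(\beta_{p,X}^2\). A second application of McConnell's inequality returns to \(\sum_a\varepsilon_ad_{k_a}h_a\) at a further cost of \(\beta_{p,X}\). The total constant is \(\beta^2\cdot\beta^2=\beta^4\), matching the statement.

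\textbf{Main obstacle.} The expected difficulty is Step~2: checking that McConnell's inequality may be applied in the Bochner space \(L^p(\Omega_\varepsilon;X)\) with the Rademachers frozen, and that several \(a\) sharing the same \(k_a\) still give one martingale difference per level. Both points should follow by treating \(\varepsilon\) as a parameter, since the tangency verification is pointwise in the \(\mathcal F_k\)-measurable data. Separability is handled as before.
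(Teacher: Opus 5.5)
Your proposal is correct and follows the paper's proof essentially step for step. It uses the coordinate identity \eqref{coincide1} with the two centered intervals $I_a^{\pm}$, groups the summands by level with the Rademacher signs frozen, and sandwiches Lemma~\ref{lem:R-independent-cyclic} between two applications of McConnell's inequality to get $\beta_{p,X}^4$. The differences are cosmetic: you apply the triangle inequality before decoupling rather than after, and you phrase the sign-freezing as working in the UMD space $L^p(\Omega_\varepsilon;X)$ instead of fixing each realization of the signs.
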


\begin{proof}
We follow the proof of
Proposition~\ref{prop:adapted-terminal-core} in
Section~\ref{sec:CPD-to-partial-sums}, adapting its construction
to the present randomized setting.
Define the two (possibly empty) intervals
\[
I_a^+:=R_{m_{k_a}}\cap[1,j_a),
\qquad
I_a^-:=R_{m_{k_a}}\cap[1-m_{k_a},j_a-m_{k_a}),
\]
and put
\[
A_a:=P_{m_{k_a},I_a^+}+P_{m_{k_a},I_a^-}.
\]
The same coordinatewise Fourier calculation as in the proof of
\eqref{coincide} yields
\begin{equation}\label{coincide1}
\sum_{i=1}^{j_a-1}\Delta_{k_a,i}h_a
=A_a^{(x_{k_a})}d_{k_a}h_a.
\end{equation}

Write
\[
g_a(x_{<k_a},x_{k_a}):=(d_{k_a}h_a)(x).
\]
By the same separability reduction as in the proof of
Proposition~\ref{prop:adapted-terminal-core}, we may work in a
separable closed subspace of \(X\).
On the enlarged probability space
\[
\widehat\Omega:=\Gm\times\prod_{k=0}^K\mathbb Z_{m_k},
\]
consider the filtration with
\(\widehat{\mathcal F}_0=\{\varnothing,\widehat\Omega\}\) and
\(\widehat{\mathcal F}_{k+1}=\sigma(x_0,\ldots,x_k,y_0,\ldots,y_k)\)
for \(0\le k\le K\), and set \(\mathcal G=\sigma(x_0,x_1,\ldots)\).
For each fixed choice of the Rademacher signs, group the summands
according to their common coordinate index and define
\[
\begin{aligned}
D_k^\varepsilon(x)
&:=\sum_{\{a:k_a=k\}}\varepsilon_a g_a(x_{<k},x_k),\\
\widetilde D_k^\varepsilon(x)
&:=\sum_{\{a:k_a=k\}}\varepsilon_a
  \bigl(A_a g_a(x_{<k},\cdot)\bigr)(x_k).
\end{aligned}
\]
Replacing the current coordinate by its independent copy,
we define the corresponding sequences
\[
\begin{aligned}
D_k^{\varepsilon\prime}(x,y)
&:=\sum_{\{a:k_a=k\}}\varepsilon_a g_a(x_{<k},y_k),\\
\widetilde D_k^{\varepsilon\prime}(x,y)
&:=\sum_{\{a:k_a=k\}}\varepsilon_a
  \bigl(A_a g_a(x_{<k},\cdot)\bigr)(y_k).
\end{aligned}
\]
For each fixed realization of the outer Rademacher sequence
\((\varepsilon_a)_a\), the sequences
\(\{D_k^\varepsilon\}_k\) and
\(\{\widetilde D_k^\varepsilon\}_k\) play the roles of
\(\{D_k\}_k\) and \(\{\widetilde D_k\}_k\), respectively, in the
earlier argument. Similarly,
\(\{D_k^{\varepsilon\prime}\}_k\) and
\(\{\widetilde D_k^{\varepsilon\prime}\}_k\) are the counterparts of
\(\{D_k'\}_k\) and \(\{\widetilde D_k'\}_k\) from the proof of
Proposition~\ref{prop:adapted-terminal-core}.
Applying the calculations in Steps~2 and~3 of that proof, we similarly show that
$\{D_k^\varepsilon\}_k,\{\widetilde D_k^\varepsilon\}_k$
are martingale difference sequences, with
$\{D_k^{\varepsilon\prime}\}_k,\{\widetilde D_k^{\varepsilon\prime}\}_k$
as their respective decoupled tangent sequences.

Now we use the abbreviated notation
\[
\|\cdot\|_{L^{p}_{\varepsilon,x}}:=
\|\cdot\|_{L^p(\Omega_\varepsilon\times\Gm;X)},
\qquad
\|\cdot\|_{L^{p}_{\varepsilon,x,y}}:=
\|\cdot\|_{L^p(\Omega_\varepsilon\times\widehat\Omega;X)},
\]
and proceed as in Step~4 of the proof of
Proposition~\ref{prop:adapted-terminal-core}.
Applying \eqref{eq:umd-tangent-decoupling} for each fixed choice
of the signs and 
Lemma~\ref{lem:R-independent-cyclic} separately to \(I_a^+\) and
\(I_a^-\), we deduce that
\begin{eqnarray*}
\left\|\sum_a\varepsilon_a\sum_{i=1}^{j_a-1}
\Delta_{k_a,i}h_a\right\|_{L^{p}_{\varepsilon,x}}
&\overset{\eqref{coincide1}}{=}&
\left\|\sum_k\widetilde D_k^\varepsilon\right\|_{L^{p}_{\varepsilon,x}}
\overset{\eqref{eq:umd-tangent-decoupling}}{\le}\beta_{p,X}
\left\|\sum_k\widetilde D_k^{\varepsilon\prime}
\right\|_{L^{p}_{\varepsilon,x,y}}\\
&\overset{\eqref{eq:R-independent-cyclic}}{\lesssim_{p}}&\beta_{p,X}^3
\left\|\sum_a\varepsilon_a g_a(x_{<k_a},y_{k_a})
\right\|_{L^{p}_{\varepsilon,x,y}}\\
&=&\beta_{p,X}^3\left\|\sum_k D_k^{\varepsilon\prime}\right\|_{L^{p}_{\varepsilon,x,y}}
\overset{\eqref{eq:umd-tangent-decoupling}}{\le}\beta_{p,X}^4\left\|\sum_kD_k^\varepsilon\right\|_{L^{p}_{\varepsilon,x}}\\
&=&\beta_{p,X}^4\left\|\sum_a\varepsilon_a d_{k_a}h_a\right\|_{L^{p}_{\varepsilon,x}}.
\end{eqnarray*}
This proves \eqref{eq:R-adapted-terminal}.
\end{proof}

\begin{proof}[Proof of Theorem~\ref{thm:R-bound}]
By the definition of the lexicographic order,
\begin{equation}\label{eq:S-decomposition}
S_{k,j}
=
E_k-E_0+\sum_{i=1}^{j-1}\Delta_{k,i}.
\end{equation}
The vector-valued Stein inequality \cite[Theorem~4.2.23]{HNVWI}
implies that \(\{E_k:k\ge0\}\) is an \(\mathcal R\)-bounded family
on \(L^p(\Gm;X)\), with
\begin{align*}
    \mathcal R_p\left(\{E_k:k\ge0\}\right)\lesssim_p \beta_{p,X}.
\end{align*}
Since \(d_k=E_{k+1}-E_k\), the same inequality shows that
\(\{d_k:k\ge0\}\) is also \(\mathcal R\)-bounded.
Combining these facts with Lemma~\ref{lem:R-adapted-terminal}
and \eqref{eq:S-decomposition}, we obtain
\[
\begin{aligned}
\left\|
 \sum_{a=1}^N\varepsilon_aS_{k_a,j_a}h_a
\right\|_{L^p(\Omega_\varepsilon\times\Gm;X)}
&\le
\left\|
 \sum_{a=1}^N\varepsilon_a(E_{k_a}-E_0)h_a
\right\|_{L^p(\Omega_\varepsilon\times\Gm;X)}
\\
&\quad+
\left\|
 \sum_{a=1}^N\varepsilon_a
 \sum_{i=1}^{j_a-1}\Delta_{k_a,i}h_a
\right\|_{L^p(\Omega_\varepsilon\times\Gm;X)}
\\
&\lesssim_{p}\beta_{p,X}^5
\left\|
 \sum_{a=1}^N\varepsilon_ah_a
\right\|_{L^p(\Omega_\varepsilon\times\Gm;X)}.
\end{aligned}
\]
This proves \eqref{eq:R-bound}.

It remains to prove the converse. Assume that
\(\sup_km_k=\infty\) and that the family \(\{S_{k,j}\}\) is
\(\mathcal R\)-bounded. In particular,
\[
C:=\sup_{(k,j)\in\Lambda_{\mathbf m}}
\|S_{k,j}\|_{L^p(\Gm;X)\to L^p(\Gm;X)}<\infty.
\]
Choose a sequence of indices \(k_r\) such that \(m_{k_r}\to\infty\).
Let
\[
h(t):=\sum_{|n|\le L}x_ne^{2\pi int}
\]
be an \(X\)-valued trigonometric polynomial. For all sufficiently
large \(r\), we have \(m_{k_r}>2L+1\), and hence \(L+1<m_{k_r}\).
Define
\[
f_r(\omega):=h\left(\frac{\omega_{k_r}}{m_{k_r}}\right),
\qquad \omega=(\omega_0,\omega_1,\ldots)\in\Gm.
\]
Using the identity \(\overline{\psi_a}=\psi_{0\ominus a}\),
we obtain
\[
f_r=\sum_{n=0}^Lx_n\psi_{nM_{k_r}}
    +\sum_{n=1}^Lx_{-n}\psi_{(m_{k_r}-n)M_{k_r}}.
\]
Since \(m_{k_r}-n>L\) for \(1\le n\le L\),
\eqref{eq:S-decomposition} implies
\begin{align*}
(S_{k_r,L+1}+E_0)f_r(\omega)
&=\left(E_{k_r}+\sum_{i=1}^L\Delta_{k_r,i}\right)f_r(\omega)\\
&=\sum_{n=0}^Lx_n\psi_{nM_{k_r}}(\omega)
= (P_+^{\mathbb T}h)\left(\frac{\omega_{k_r}}{m_{k_r}}\right).
\end{align*}
The assumed bound \(C\) and the contractivity of \(E_0\)
therefore yield
\[
\left(\frac1{m_{k_r}}\sum_{s=0}^{m_{k_r}-1}
\left\|(P_+^{\mathbb T}h)(s/m_{k_r})\right\|_X^p\right)^{1/p}
\le(C+1)
\left(\frac1{m_{k_r}}\sum_{s=0}^{m_{k_r}-1}
\left\|h(s/m_{k_r})\right\|_X^p\right)^{1/p}.
\]
Since the functions appearing in these Riemann sums are continuous,
passing to the limit gives
\[
\|P_+^{\mathbb T}h\|_{L^p(\mathbb T;X)}
\le(C+1)\|h\|_{L^p(\mathbb T;X)}.
\]
By density, \(P_+^{\mathbb T}\) extends to a bounded operator on
\(L^p(\mathbb T;X)\). It follows from Remark~\ref{Riesz}
that \(X\) is UMD.
\end{proof}

\section{Proof of Proposition \ref{prop:R-bounded-implies-alpha}}\label{s9}

The argument of
Cl\'ement et al.~\cite[Theorem~5.1]{ClementDePagterSukochevWitvliet2000}
for bounded Vilenkin systems relies on the unconditionality of the fine-block decomposition,
which fails in general in the unbounded setting. We therefore replace this ingredient by the $\mathcal R$-boundedness of the terminal
block projections $\pi_{k,\ell}$, obtained from
Theorem~\ref{thm:R-bound} and the vector-valued Stein inequality.
 We first recall the definition of property \((\alpha)\).

\begin{definition}[Property \((\alpha)\)]\label{def:property-alpha}
Fix \(1\le p<\infty\). A Banach space \(X\) has \emph{property \((\alpha)\)} if there is a constant
\(\alpha_{p,X}<\infty\) such that, for every \(N\ge1\), every finite family
\((x_{ij})_{1\le i,j\le N}\subset X\), and every scalar matrix
\((a_{ij})\) with \(|a_{ij}|\le1\),
\begin{equation}\label{eq:property-alpha}
\left\|\sum_{i,j=1}^N a_{ij}\varepsilon_i\varepsilon'_j x_{ij}\right\|_{L^p(\Omega_\varepsilon\times\Omega_{\varepsilon'};X)}
\le \alpha_{p,X}
\left\|\sum_{i,j=1}^N \varepsilon_i\varepsilon'_j x_{ij}\right\|_{L^p(\Omega_\varepsilon\times\Omega_{\varepsilon'};X)}.
\end{equation}
Here \((\varepsilon_i)_i\) and \((\varepsilon'_j)_j\) are independent
Rademacher sequences. By Kahane's inequality, this condition is independent
of the choice of \(p\in[1,\infty)\).
\end{definition}

We shall use the following fact proved in \cite{GirardiWeis2003}.
\begin{lemma}\label{Bochner2}
Let $X$ be a Banach space with property $(\alpha).$ For any $1\le p<\infty$ and any measure space $(S,\mu),$ the Bochner space $L^p(S;X)$ also has property $(\alpha)$, with $\alpha_{p,L^p(S;X)}\le \alpha_{p,X}.$
\end{lemma}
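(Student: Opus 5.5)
The plan is to prove Lemma~\ref{Bochner2} directly from Definition~\ref{def:property-alpha} by Fubini's theorem. The point is that the Rademacher exponent $p$ in \eqref{eq:property-alpha} is the same as the integrability exponent of the Bochner space $L^p(S;X)$. Then the $L^p(S)$-norm and the $L^p(\Omega_\varepsilon\times\Omega_{\varepsilon'})$-norm can be interchanged, and the inequality for $X$ can be applied pointwise in $s\in S$.

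Concretely, fix $N\ge1$, elements $f_{ij}\in L^p(S;X)$ and scalars $|a_{ij}|\le1$. Since $(\varepsilon_i)$ and $(\varepsilon'_j)$ take finitely many values, the function $(\omega,\omega',s)\mapsto \sum_{i,j}a_{ij}\varepsilon_i(\omega)\varepsilon'_j(\omega')f_{ij}(s)$ is strongly measurable on the product space, and its values for fixed $(\omega,\omega')$ are elements of $L^p(S;X)$. Fubini--Tonelli then gives
\[
\left\|\sum_{i,j}a_{ij}\varepsilon_i\varepsilon'_jf_{ij}\right\|_{L^p(\Omega_\varepsilon\times\Omega_{\varepsilon'};L^p(S;X))}^p
=\int_S\mathbb E_{\varepsilon,\varepsilon'}\left\|\sum_{i,j}a_{ij}\varepsilon_i\varepsilon'_jf_{ij}(s)\right\|_X^p\,d\mu(s).
\]

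For each fixed $s$ (outside a null set on which the $f_{ij}$ might be undefined), apply \eqref{eq:property-alpha} in $X$ to the finite family $x_{ij}:=f_{ij}(s)$. This bounds the integrand by
\[
\alpha_{p,X}^p\,\mathbb E_{\varepsilon,\varepsilon'}\left\|\sum_{i,j}\varepsilon_i\varepsilon'_jf_{ij}(s)\right\|_X^p .
\]
Integrating over $S$ and using Fubini once more in the reverse direction gives \eqref{eq:property-alpha} for $L^p(S;X)$ with constant $\alpha_{p,X}$. Hence $\alpha_{p,L^p(S;X)}\le\alpha_{p,X}$.

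There is no real obstacle here. The only care needed is to use the \emph{same} exponent $p$ for the Rademacher averages and for the Bochner space. If a different Rademacher exponent $q$ were used, one would first pass to exponent $p$ by Kahane's inequality, which is permitted by the remark after Definition~\ref{def:property-alpha}, but then only a $q$-dependent multiple of the constant would be obtained. The measurability bookkeeping is routine because the Rademacher variables are simple functions.
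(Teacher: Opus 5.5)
Your proof is correct, and it is the standard argument. The paper gives no proof of this lemma and only cites Girardi and Weis; the result there rests on exactly this pointwise-in-$s$ application of property $(\alpha)$ with matching exponents, and your remark that the constant is preserved only because the Rademacher exponent equals the Bochner exponent is the right one. A small refinement: the Rademacher expectation is a finite average over $\{-1,1\}^{2N}$, so swapping it with $\int_S$ is just linearity of the integral. No $\sigma$-finiteness of $(S,\mu)$ is needed, which matters since the lemma allows an arbitrary measure space.
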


\begin{proof}[Proof of Proposition~\ref{prop:R-bounded-implies-alpha}]
We first prove sufficiency. Assume that $X$ is UMD and has
property $(\alpha).$ By Lemma~\ref{lem:paley-conjugation-identity}
and the complex contraction principle, it suffices to show that
\[\left\{\sum_{k=0}^{K}\pi_{k,n_k}d_k:n=\sum_{k=0}^Kn_kM_k\in\mathbb N\right\}\]
is \(\mathcal R\)-bounded on \(L^p(\Gm;X)\). Since
$$\pi_{k,0}=0,\qquad \pi_{k,\ell}=d_k-S_{k,m_k-\ell}+E_k-E_0,\qquad 1\le \ell<m_k,$$
the \(\mathcal R\)-boundedness of
$\{\pi_{k,\ell},k\ge 0,0\le \ell<m_k\}$ on $L^p(\Gm;X)$
follows from Theorem~\ref{thm:R-bound} and the vector-valued
Stein inequality \cite[Theorem~4.2.23]{HNVWI}.
Denote its $\mathcal R$-bound simply by $\mathcal R_p.$
Then $\mathcal R_p\lesssim_{p} \beta_{p,X}^5.$
By the UMD property, property $(\alpha)$ of $X$, and
Lemma~\ref{Bochner2}, for any $n^i=\sum_{k=0}^{K_i}n_k^iM_k$, $i=1,\dots,N$, and any
$f_i\in L^p(\Gm;X),$ we obtain the following estimates, with $n_k^i:=0$ for $k>K_i$:
\begin{align*}
\left\|\sum_{i=1}^N\varepsilon_i\sum_{k=0}^{K_i}\pi_{k,n^i_k}d_kf_i\right\|_{L^p(\Omega_\varepsilon\times\Gm;X)} &\le \beta_{p,X} \left\|\sum_{i=1}^N\sum_{k=0}^{K_i}\varepsilon_i\delta_k\pi_{k,n^i_k} d_kf_i\right\|_{L^p(\Omega_\varepsilon\times \Omega_\delta\times\Gm;X)}\\
&\le \alpha_{p,X}\beta_{p,X} \left\|\sum_{i=1}^N\sum_{k=0}^{\max K_i}\varepsilon_i\delta_k\rho_{i,k}\pi_{k,n^i_k} d_kf_i\right\|_{L^p(\Omega_\varepsilon\times \Omega_\delta\times \Omega_\rho\times\Gm;X)}\\
&\le \alpha_{p,X}\beta_{p,X} \mathcal R_p\left\|\sum_{i=1}^N\sum_{k=0}^{\max K_i}\varepsilon_i\delta_k\rho_{i,k}d_kf_i\right\|_{L^p(\Omega_\varepsilon\times \Omega_\delta\times \Omega_\rho\times\Gm;X)}\\
&\le \alpha_{p,X}^2\beta_{p,X} \mathcal R_p \left\|\sum_{k=0}^{\max K_i}\sum_{i=1}^N\varepsilon_i\delta_kd_kf_i\right\|_{L^p(\Omega_\varepsilon\times \Omega_\delta\times\Gm;X)}\\
&\le \alpha_{p,X}^2\beta_{p,X}^2 \mathcal R_p\left\|\sum_{i=1}^N\varepsilon_if_i\right\|_{L^p(\Omega_\varepsilon\times\Gm;X)},
\end{align*}
where $\{\delta_k\}_k$ and $\{\rho_{i,k}\}_{i,k}$ are mutually independent
families of Rademacher variables, also independent of $(\varepsilon_i)_i$.
This implies the $\mathcal R$-boundedness of
\[\left\{\sum_{k=0}^{K}\pi_{k,n_k}d_k:n=\sum_{k=0}^Kn_kM_k\in\mathbb N\right\}\]
and completes the proof of sufficiency.

We now prove necessity.
Assume that \(\{S_n:n\in\mathbb N\}\) is
\(\mathcal R\)-bounded on \(L^p(\Gm;X)\), and write
\[
\widetilde{\mathcal R}_p:=\mathcal R_p\bigl(\{S_n:n\ge0\}\bigr)<\infty,
\qquad S_0:=0.
\]
Since \(\|S_n\|\le \widetilde{\mathcal R}_p\),
Corollary~\ref{cor:umd-partial-sums-schauder} implies that \(X\)
is UMD. It remains to verify property \((\alpha)\).
Fix \(N\ge1\) and \(x_{ij}\in X\), \(1\le i,j\le N\). For
\(\eta^{(i)}=(\eta_{ij})_{j=1}^N\in\{0,1\}^N\), \(1\le i\le N\), set
\[
n_{\eta^{(i)}}
:=
\sum_{j=1}^N\eta_{ij}(m_{j-1}-1)M_{j-1},
\qquad
T_{\eta^{(i)}}
:=
\sum_{j=1}^N\eta_{ij}d_{j-1}.
\]
By \eqref{eq:weisz-partial-sum-identity},
\[
T_{\eta^{(i)}} f
=P_{B_{n_{\eta^{(i)}}}}f=
\overline{\psi_{n_{\eta^{(i)}}}}\,
S_{n_{\eta^{(i)}}}(\psi_{n_{\eta^{(i)}}}f).
\]
Therefore, by the complex contraction principle,
\begin{equation}\label{compare1}
    \left\|
\sum_{i=1}^N\varepsilon_iT_{\eta^{(i)}}f_i
\right\|_{L^p(\Omega_\varepsilon\times\Gm;X)}
\le
4\widetilde{\mathcal R}_p
\left\|
\sum_{i=1}^N\varepsilon_if_i
\right\|_{L^p(\Omega_\varepsilon\times\Gm;X)}
\end{equation}
for all \(\eta^{(i)}\in\{0,1\}^N\) and
\(f_i\in L^p(\Gm;X)\), \(i=1,\dots,N\).

For \(1\le j\le N\), define the martingale difference
\[
\rho_j(t):=
\begin{cases}
(-1)^{t_{j-1}},&m_{j-1}=2,\\[1mm]
\sin(2\pi t_{j-1}/m_{j-1}),&m_{j-1}\ge3.
\end{cases}
\]
We may assume that the family $\{\rho_j\}_{j=1}^N$ is independent
of $\{\varepsilon_j\}_{j=1}^N.$
By construction, the variables \(\rho_j\) are independent and
symmetric. Moreover, \(|\rho_j|\le1\), and
\[
\mathbb E|\rho_j|
\ge
\mathbb E|\rho_j|^2
\ge\frac12.
\]
Consequently, $\rho_j \overset{d}= \varepsilon_j |\rho_j|$,
and for every Banach space \(Y\) and every
\(y_1,\ldots,y_N\in Y\), we have
\begin{equation}\label{compare2}
\left\|\sum_{j=1}^N\rho_jy_j
\right\|_{L^p(\Gm;Y)}=\left\|\sum_{j=1}^N|\rho_j|\varepsilon_j'y_j
\right\|_{L^p(\Omega_{\varepsilon'}\times\Gm;Y)}
\simeq
\left\|\sum_{j=1}^N\varepsilon'_jy_j
\right\|_{L^p(\Omega_{\varepsilon'};Y)},
\end{equation}
where the lower bound follows from Jensen's inequality applied to
the expectation $\mathbb E_{(\rho_1,\dots,\rho_N)}$ and the real contraction principle.

For a fixed matrix $\eta=(\eta_{ij})\in\{0,1\}^{N\times N},$ set
\(
f_i:=\sum_{j=1}^N\rho_jx_{ij}.
\)
Since the variables $\rho_j,j=1,\dots,N$ form a martingale
difference sequence, we have
\(d_{j-1}\rho_k=\mathbf1_{\{j=k\}}\rho_k\), and hence
\[
T_{\eta^{(i)}}f_i
=
\sum_{j=1}^N\eta_{ij}\rho_jx_{ij}.
\]
Applying \eqref{compare2} with
\(Y=L^p(\Omega_\varepsilon;X)\) and using \eqref{compare1},
we obtain
\[
\begin{aligned}
\left\|
\sum_{i,j=1}^N\eta_{ij}\varepsilon_i\varepsilon'_jx_{ij}
\right\|_{L^p(\Omega_\varepsilon\times\Omega_{\varepsilon'};X)}
&\le 2 \left\|
\sum_{i=1}^N\varepsilon_iT_{\eta^{(i)}}f_i
\right\|_{L^p(\Omega_\varepsilon\times\Gm;X)}
\\
&\le
8\widetilde{\mathcal R}_p
\left\|
\sum_{i=1}^N\varepsilon_if_i
\right\|_{L^p(\Omega_\varepsilon\times\Gm;X)}
\\
&\le
8\widetilde{\mathcal R}_p
\left\|
\sum_{i,j=1}^N\varepsilon_i\varepsilon'_jx_{ij}
\right\|_{L^p(\Omega_\varepsilon\times \Omega_{\varepsilon'};X)}.
\end{aligned}
\]

For a sign matrix \(\theta\in \{-1,1\}^{N\times N}\), write
\(\theta_{ij}=2\eta_{ij}-1\). Then
\[
\left\|
\sum_{i,j=1}^N\theta_{ij}\varepsilon_i\varepsilon'_jx_{ij}
\right\|_{L^p(\Omega_\varepsilon\times \Omega_{\varepsilon'};X)}
\le
(16\widetilde{\mathcal R}_p+1)
\left\|
\sum_{i,j=1}^N\varepsilon_i\varepsilon'_jx_{ij}
\right\|_{L^p(\Omega_\varepsilon\times\Omega_{\varepsilon'};X)}.
\]
By convexity, for every real matrix
\((a_{ij})\) with \(|a_{ij}|\le1\), we have
\begin{equation}\label{real matrix}
    \left\|
\sum_{i,j=1}^Na_{ij}\varepsilon_i\varepsilon'_jx_{ij}
\right\|_{L^p(\Omega_\varepsilon\times \Omega_{\varepsilon'};X)}
\le
(16\widetilde{\mathcal R}_p+1)
\left\|
\sum_{i,j=1}^N\varepsilon_i\varepsilon'_jx_{ij}
\right\|_{L^p(\Omega_\varepsilon\times \Omega_{\varepsilon'};X)}.
\end{equation}
For every complex matrix
\((a_{ij})\) with \(|a_{ij}|\le1\), applying \eqref{real matrix}
to the real and imaginary parts of \((a_{ij})\) gives
\begin{equation}
    \left\|
\sum_{i,j=1}^Na_{ij}\varepsilon_i\varepsilon'_jx_{ij}
\right\|_{L^p(\Omega_\varepsilon\times \Omega_{\varepsilon'};X)}
\le
(32\widetilde{\mathcal R}_p+2)
\left\|
\sum_{i,j=1}^N\varepsilon_i\varepsilon'_jx_{ij}
\right\|_{L^p(\Omega_\varepsilon\times \Omega_{\varepsilon'};X)}.
\end{equation}
Thus \(X\) has property \((\alpha)\), and the proof is complete.
\end{proof}

\medskip

\noindent\textbf{Acknowledgements.}
This work was partially supported by the National Natural Science
Foundation of China (grant nos.~12071355, 12325105, 12031004,
W2441002, and W2611005).

\noindent\textbf{AI statement.}
The authors acknowledge the use of AI tools, including ChatGPT and
Rethlas, for language polishing, \LaTeX{} editing, and exploratory
mathematical discussions during the preparation of this manuscript.
Some ideas used in developing the proofs arose during interactions with
GPT-5.6 Sol and Rethlas. All AI-generated suggestions were subsequently
examined, reformulated, and independently verified by the authors, who
take full responsibility for the mathematical content of the manuscript.



\raggedbottom




\end{document}